\documentclass[aos,preprint]{imsart}

\RequirePackage[OT1]{fontenc}
\RequirePackage[colorlinks,citecolor=blue,urlcolor=blue]{hyperref}
\usepackage[authoryear,round,compress,sort]{natbib} 

\usepackage{amsmath,amssymb,amsfonts}
\usepackage{amsbsy, amsthm,amsmath, epsfig, subfig}
\usepackage{graphicx,rotating, tabularx, booktabs}
\usepackage{enumerate}
\RequirePackage[colorlinks,citecolor=blue,urlcolor=blue]{hyperref}

\usepackage{xr}
\externaldocument{supp}

\arxiv{arXiv:1502.04237}

\startlocaldefs
\numberwithin{equation}{section}
\theoremstyle{plain}

\newcommand{\bfm}[1]{\ensuremath{\mathbf{#1}}}
\def\ba{\bfm a}          
\def\bb{\bfm b}          
          
     \def\bD{\bfm D}

     \def\bG{\bfm G}     
          
     \def\bI{\bfm I}

     \def\bL{\bfm L}

     \def\bP{\bfm P}

\def\bt{\bfm t}          
\def\bu{\bfm u}     \def\bU{\bfm U}     
\def\bv{\bfm v}     \def\bV{\bfm V}     
\def\bw{\bfm w}     \def\bW{\bfm W}     
\def\bx{\bfm x}     \def\bX{\bfm X}     
\def\by{\bfm y}     \def\bY{\bfm Y}     
\def\bZ{\bfm z}     \def\bZ{\bfm Z}     

\newcommand{\bfsym}[1]{\ensuremath{\boldsymbol{#1}}}

\def\balpha    {\bfsym \alpha}
\def\bbeta     {\bfsym \beta}

\def\bdelta    {\bfsym \delta}

\def\bxi       {\bfsym {\xi}}

\def\beps      {\bfsym \varepsilon}

\def\bGamma  {\bfsym \Gamma}

\def\bSigma  {\bfsym \Sigma}

\renewcommand{\hat}{\widehat}


\makeatletter
\def\singlespace{\def\baselinestretch{1}\@normalsize}


\newcommand{\noi}{\noindent}

\newcommand{\beq}{\begin{eqnarray*}}
\newcommand{\eeq}{\end{eqnarray*}}
\newcommand{\beqn}{\begin{eqnarray}}
\newcommand{\eeqn}{\end{eqnarray}}

\newcommand{\var}{\mbox{Var}}
\newcommand{\ei}{\end{itemize}}
\newcommand{\sta}{\stackrel}

\newcommand{\be}{\begin{equation}}
\newcommand{\ee}{\end{equation}}
\newcommand{\nn}{\nonumber}

\newcommand{\ignore}[1]{}{}
\newcommand{\f}{\frac}

\newcommand{\sn}{\sum_{i=1}^n}

\renewcommand{\P}{\mathbb{P}}

\newcommand{\de}{\delta}
\newcommand{\De}{\Delta}

\newcommand{\s}{\sqrt}

\newcommand{\e}{\mathbb{E}}

\newcommand{\bbr}{\mathbb{R}}
\newcommand{\lea}{\left\langle}
\newcommand{\ria}{\right\rangle}
\newcommand{\wh}{\widehat}
\newcommand{\mo}{\mathbf{0}}

\newcommand{\mI}{\mathbf{I}}
\newcommand{\mS}{\mathbf{S}}
\newcommand{\mM}{\mathbf{M}}
\newcommand{\mx}{\mathbf{x}}
\newcommand{\bS}{\mathbb{S}}
\newcommand{\PP}{\mathbb{P}}

\newtheorem{lemma}{Lemma}[section]
\newtheorem{proposition}{Proposition}[section]
\newtheorem{theorem}{Theorem}[section]
\newtheorem{assumption}{Condition}[section]

\newtheorem{remark}{Remark}[section]
\newtheorem{definition}{Definition}[section]

\def \sMB {\mbox{\scriptsize MB}}
\def \scMB {\mbox{\scriptsize CMB}}

\endlocaldefs

\begin{document}

\begin{frontmatter}
\title{{Are Discoveries Spurious? \\ Distributions of Maximum Spurious Correlations and Their Applications}\thanksref{T1}}

\runtitle{Distributions of Maximum Spurious Correlations}
\thankstext{T1}{J. Fan was partially supported by NSF Grants DMS-1206464, DMS-1406266, and NIH Grant R01-GM072611-10. Q.-M. Shao was supported in part by Hong Kong Research Grants Council GRF-403513 and GRF-14302515. W.-X. Zhou was supported by NIH Grant R01-GM072611-10.}

\begin{aug}
\author{\fnms{Jianqing} \snm{Fan}\thanksref{t0,t1}, \ead[label=e1]{jqfan@princeton.edu}}
\author{\fnms{Qi-Man} \snm{Shao}\thanksref{t3}\ead[label=e2]{qmshao@cuhk.edu.hk}}
\and
\author{\fnms{Wen-Xin} \snm{Zhou}\thanksref{t1,t4}\ead[label=e3]{wenxinz@princeton.edu} \ead[label=e4]{stevewenxin@hotmail.com} }

\runauthor{J. Fan, Q.-M. Shao and W.-X. Zhou}

\affiliation{Fudan University\thanksmark{t0}, Princeton University\thanksmark{t1}, \\
Chinese University of Hong Kong\thanksmark{t3} \\ 
and University of California, San Diego\thanksmark{t4}}

\address{School of Data Science \\
Fudan University \\
Shanghai 200433 \\
China \\
and \\
Department of Operations Research \\
			  and Financial Engineering \\
			   Princeton University \\
			   Princeton, New Jersey 08544 \\
			   USA \\
			   \printead{e1}}

\address{Department of Statistics \\
              Chinese University of Hong Kong  \\
              Shatin, NT \\ Hong Kong \\ \printead{e2}}
              
\address{Department of Mathematics \\
University of California, San Diego \\
La Jolla, California 92093 \\
USA \\
\printead{e4}}
\end{aug}

\begin{abstract}
Over the last two decades, many exciting variable selection methods have been developed for finding a small group of covariates that are associated with the response from a large pool.  Can the discoveries from these data mining approaches be spurious due to high dimensionality and limited sample size?  Can our fundamental assumptions about the exogeneity of the covariates needed for such variable selection be validated with the data?  To answer these questions, we need to derive the distributions of the maximum spurious correlations given a certain number of predictors, namely, the distribution of the correlation of a response variable $Y$ with the best $s$ linear combinations of $p$ covariates $\bX$, even when $\bX$ and $Y$ are independent.  When the covariance matrix of $\bX$ possesses the restricted eigenvalue property, we derive such distributions for both a finite $s$ and a diverging $s$, using Gaussian approximation and empirical process techniques.  However, such a distribution depends on the unknown covariance matrix of $\bX$. Hence, we use the multiplier bootstrap procedure to approximate the unknown distributions and establish the consistency of such a simple bootstrap approach. The results are further extended to the situation where the residuals are from regularized fits.  Our approach is then used to construct the upper confidence limit for the maximum spurious correlation and to test the exogeneity of the covariates. The former provides a baseline for guarding against false discoveries and the latter tests whether our fundamental assumptions for high-dimensional model selection are statistically valid.  Our techniques and results are illustrated with both numerical examples and real data analysis.
\end{abstract}


\begin{keyword}
\kwd{High dimension}
\kwd{spurious correlation}
\kwd{bootstrap}
\kwd{false discovery}.
\end{keyword}

\end{frontmatter}

\section{Introduction} \label{sec1}
Information technology has forever changed the data collection process. Massive amounts of very high-dimensional or unstructured data are continuously produced and stored at an affordable cost. Massive and complex data and high dimensionality characterize  contemporary statistical problems in many emerging fields of science and engineering.  Various statistical and machine learning methods and algorithms have been proposed to find a small group of covariate variables that are associated with given responses such as biological and clinical outcomes. These methods have been very successfully applied to genomics, genetics, neuroscience, economics, and finance. For an overview of high-dimensional statistical theory and methods, see the review article by \cite{FLV10} and monographs by \cite{DVL07}, \cite{HTF09}, \cite{EFR10} and \cite{BVG11}.

Underlying machine learning, data mining, and high-dimensional statistical techniques, there are many
model assumptions and even heuristic arguments.  For example, the LASSO [\cite{TIB96}] and the SCAD [\cite{FLI01}] are based on an exogeneity assumption, meaning that all of the covariates and the residual of the true model are uncorrelated.  However, it is nearly impossible that such a random variable, which is the part of the response variable that can not be explained by a small group of covariates and lives in a low-dimensional space spanned by the response and the small group of variables, is uncorrelated with any of the tens of thousands of coviariates.  Indeed,
\cite{FLI14} and \cite{FHL14} provide evidence that such an ideal assumption might not be valid, although it is a necessary condition for model selection consistency.  Even under the exogenous assumption, conditions such as the restricted eigenvalue condition [\cite{BRT09}] and homogeneity [\cite{FHL14}] are needed to ensure model selection consistency or oracle properties.  Despite their critical importance, these conditions have rarely been verified in practice.  Their violations can lead to false scientific discoveries.  A simpler question is then, for a given data set, do data mining techniques produce results that are better than spurious correlation? The answer depends on not only the correlation between the fitted and observed values, but also on the sample size, the number of variables selected, and the total number of variables.

\begin{figure}[htbp]
  \centering
  \includegraphics[width=4in]{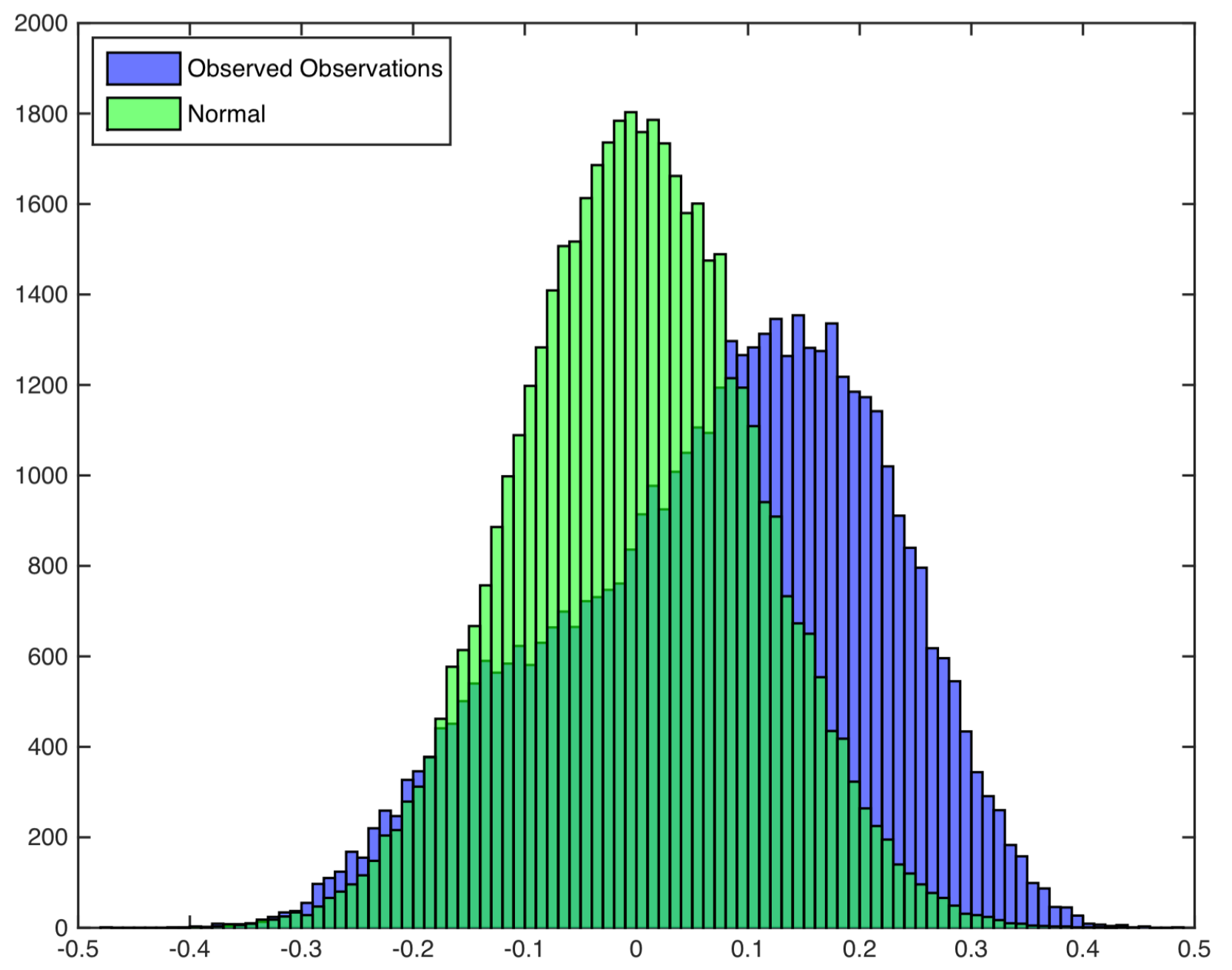}
  \caption{Histogram of the sample correlations between the residuals and each covariate (blue) and histogram of $N(0, 1/\sqrt{n})$ random variables (green). \label{Fig1} }
\end{figure}

To better appreciate the above two questions, let us consider an example.  We take the gene expression data on 90 Asians (45 Japanese and 45 Han Chinese) from the international `HapMap' project [\cite{TSK05}]. The normalized gene expression data are generated with an Illumina Sentrix Human-6 Expression Bead Chip [\cite{SNF07}] and are available on {\tt ftp://ftp.sanger.ac.uk/pub/genevar/}. We take the expressions of gene {\em CHRNA6}, a cholinergic receptor, nicotinic, alpha 6, as the response $Y$ and the remaining expressions of probes as covariates $\bX$ with dimension $p = 47292$. We first fit an $\ell_1$-penalized least-squares regression (LASSO) on the data with a tuning parameter automatically selected via ten-fold cross validation (25 genes are selected). The correlation between the LASSO-fitted value and the response is $0.8991$. Next, we refit an ordinary least-squares regression on the selected model to calculate the fitted response and residual vector. The sample correlation between the post-LASSO fit and observed responses is $0.9214$, a remarkable fit! But is it any better than the spurious correlation?  The model diagnostic plot, which depicts the empirical distribution of the correlations between each covariate $X_j$ and the residual $\hat{\varepsilon}$ after the LASSO fit, is given in Figure~{\ref{Fig1}. Does the exogenous assumption that $\e (\varepsilon X_j ) = 0$ for all $j=1, \ldots, p$ hold?

To answer the above two important questions, we need to derive the distributions of the maximum spurious correlations. Let $\bX$ be the $p$-dimensional random vector of the covariates and $\bX_{S}$ be a subset of covariates indexed by $S$.  Let $\widehat{\mbox{corr}}_n(\varepsilon,  \balpha_S^{{{\rm T}}}  \bX_S )$ be the sample correlation between the random noise $\varepsilon$ (independent of $\bX$) and $\balpha_S^{{{\rm T}}}  \bX_S$ based on a sample of size $n$, where $\balpha_S$ is a constant vector.  Then, the maximum spurious correlation is defined as
\begin{equation}      \label{eq1.1}
    \hat{R}_n(s,p) = \max_{|S| = s}
    \max_{\scriptsize \balpha_S} \widehat{\mbox{corr}}_n(\varepsilon, \balpha_S^{{{\rm T}}}  \bX_S ) ,
\end{equation}
when $\bX$ and $\varepsilon$ are independent, where the maximization is taken over all ${p \choose s}$ subsets of size $s$ and all of the linear combinations of the selected $s$ covariates. Next, let $(Y_i, \bX_i), \ldots , (Y_n, \bX_n)$ be independent and identically distributed (i.i.d.) observations from the linear model $ Y = \bX^{{\rm T}} \bbeta^* + \varepsilon$. Assume that $s$ covariates are selected by a certain variable selection method for some $1\leq s \ll \min(p,n)$. If the correlation between the fitted response and observed response is no more than the $90$th or the $95$th percentile of $\hat{R}_n(s,p)$, it is hard to claim that the fitted value is impressive or even genuine. In this case, the finding is hardly more impressive than the best fit using data that consist of independent response and explanatory variables, 90\% or 95\% of the time. To simplify and unify the terminology, we call this result the spurious discovery throughout this paper.

For the aforementioned gene expression data, as 25 probes are selected, the observed correlation of $0.9214$ between the fitted value and the response should be compared with the distribution of $\hat{R}_n(25, p)$. Further, a simple method to test the null hypothesis
\begin{equation}      \label{eq1.2}
        \e (  \varepsilon X_j ) = 0,  ~\mbox{for all $j=1, \ldots, p$},
\end{equation}
is to compare the maximum absolute correlation in Figure~\ref{Fig1} with the distribution of
$\hat{R}_n(1, p)$.  See additional details in Section~\ref{sec5.2}.

\begin{figure}[hbtp!]
  \centering
  \includegraphics[width=4in]{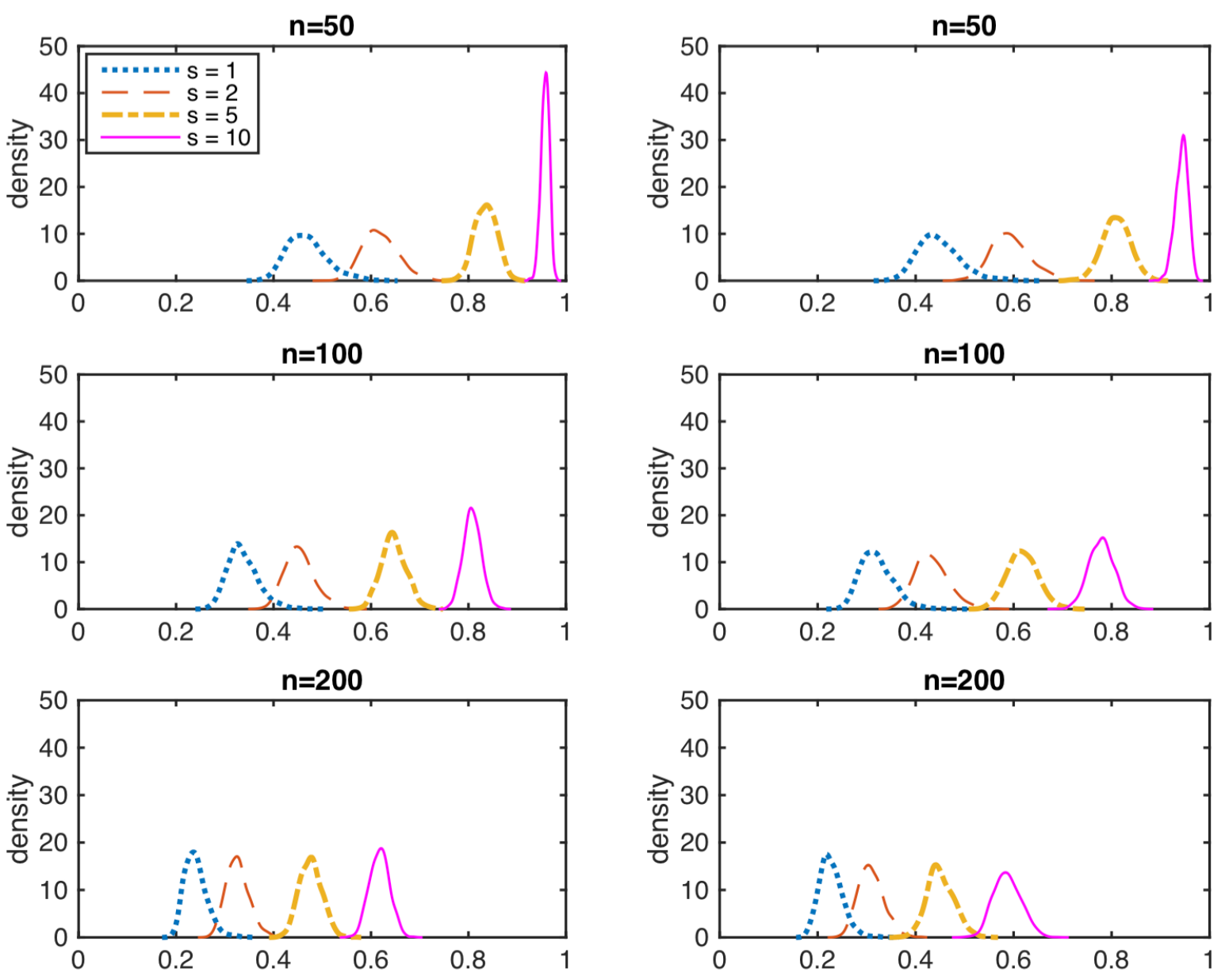}
  \caption{Distributions of maximum spurious correlations for $p=1000$ and
  $s= 1, 2, 5$ and $10$ when $\bSigma$ is the identity matrix (left panel) or block diagonal (right panel) with the first block being a $500\times500$ equi-correlation matrix with a correlation 0.8 and the second block being the $500\times 500$ identity matrix.   From top to bottom: $n = 50, 100$ and $200$. \label{Fig2} }
\end{figure}

The importance of such spurious correlation was recognized by \cite{CJI11}, \cite{FGH12} and \cite{CFJ13}. When the data are independently and normally distributed, they derive the distribution of $\hat{R}_n(1, p)$, which is equivalent to the distribution of the minimum angle to the north pole among $p$ random points uniformly distributed on the $(n+1)$-dimensional sphere. \cite{FGH12} conducted simulations to demonstrate that the spurious correlation can be very high when $p$ is large and grows quickly with $s$. To demonstrate this effect and to examine the impact of correlation and sample size, we conduct a similar but more extensive simulation study based on a combination of the stepwise addition and branch-and-bound algorithms. We simulate $\bX$ from $N(\mo, \bI_p)$ and $N(\mo, \bSigma_0)$, where $\bSigma_0$ is block diagonal with the first block being a $500\times500$ equi-correlation matrix with a correlation 0.8 and the second block being the $(p-500)\times(p-500)$ identity matrix.  $Y$ is simulated independently of $\bX$ and follows the standard normal distribution.  Figure~\ref{Fig2} depicts the simulation results for $n=50, 100$ and $200$.  Clearly, the distributions depend on $(s,p,n)$ and $\bSigma$, the covariance matrix of $\bX$, although the dependence on $\bSigma$ does not seem very strong.  However, the theoretical result of \cite{FGH12} covers only the very specific case where $s=1$ and $\bSigma = \bI_p$.

There are several challenges to deriving the asymptotic distribution of the statistic $\hat{R}_n(s,p)$, as it involves combinatorial optimization.  Further technical complications are added by the dependence among the covariates $\bX$. Nevertheless, under the restricted eigenvalue condition [\cite{BRT09}] on $\bSigma$,  in this paper, we derive the asymptotic distribution of such a spurious correlation statistic for both a fixed $s$ and a diverging $s$, using the empirical process and Gaussian approximation techniques given in \cite{CCK14a}.  As expected, such distributions depend on the unknown covariance matrix $\bSigma$.  To provide a consistent estimate of the distributions of the spurious correlations, we consider the use of a multiplier bootstrap method and demonstrate its consistency under mild conditions. {The multiplier bootstrap procedure has been widely used due to its good numerical performance. Its theoretical validity is guaranteed by the multiplier central limit theorem [\cite{VW96}]. For the most advanced recent results, we refer to \cite{CB2005}, \cite{ABR2010} and \cite{CCK13}. In particular, \cite{CCK13} developed a number of non-asymptotic results on a multiplier bootstrap for the maxima of empirical mean vectors in high dimensions with applications to multiple hypothesis testing and parameter choice for the Dantzig selector.} The use of multiplier bootstrapping enables us to empirically compute the upper confidence limit of $\hat{R}_n(s,p)$ and hence decide whether discoveries by statistical machine learning techniques are any better than spurious correlations.

The rest of this paper is organized as follows. Section~\ref{sec2} discusses the concept of spurious correlation and introduces the main conditions and notation. Section~\ref{sec3} presents the main results of the asymptotic distributions of spurious correlations and their bootstrap approximations, which are further extended in Section~\ref{sec4}. Section~\ref{sec5} identifies three important applications of our results to high-dimensional statistical inference. Section~\ref{sec6} presents the numerical studies. The proof of Theorem~\ref{thm3.1} is provided in Section~\ref{sec7}, and the proofs for the remaining theoretical results are provided in the supplementary material.

\section{Spurious correlation, conditions, and notation} \label{sec2}
Let $\varepsilon, \varepsilon_1, \ldots , \varepsilon_n$ be i.i.d. random variables with a mean of zero and a finite variance $ \sigma^2 >0$, and let $\bX, \bX_1, \ldots , \bX_n$ be i.i.d. $p$-dimensional random vectors with a mean of zero and a covariance matrix $\bSigma= \e ( \bX \bX^{{{\rm T}}} )= (\sigma_{jk})_{1\leq j, k \leq p} $. Write
\beq
	  \bX = ( X_{1}, \ldots , X_{p})^{{{\rm T}}},    \quad    \bX_i = (X_{i1}, \ldots , X_{ip})^{{{\rm T}}} , \ \ i=1, \ldots, n.
\eeq
Assume that the two samples $\{\varepsilon_i \}_{i= 1}^n$ and $\{\bX_i\}_{i = 1}^n$ are independent. Then, the spurious correlation (\ref{eq1.1}) can be written as
\be
		\wh R_{n}(s, p ) = \max_{\balpha \in \bS^{p-1} :   | \balpha |_0=s}  \widehat{{\rm corr}}_n \big(  \varepsilon , \balpha^{{{\rm T}}} \bX \big), \label{eq2.1}
\ee
where the dimension $p$  and sparsity $s$ are allowed to grow with the sample size $n$. Here $\widehat{{\rm corr}}_n(\cdot, \cdot)$ denotes the sample Pearson correlation coefficient and $\bS^{p-1} :=\{\balpha \in \bbr^p: | \balpha |_2=1\}$ is the unit sphere of $\bbr^p$.  Due to the anti-symmetric property of the sample correlation under the sign transformation of $\balpha$, we have also
\begin{equation} \label{eq2.2}
    \wh R_{n}(s,p ) = \max_{\balpha \in \bS^{p-1}  :  | \balpha |_0=s}  | \widehat{{\rm corr}}_n \big(  \varepsilon , \balpha^{{{\rm T}}} \bX \big)|,
\end{equation}
More specifically, we can express $\hat R_n(s,p)$ as
\begin{align}
	  \max_{S \subseteq [p] :  |S|=s} \max_{\balpha \in \bS^{s-1} } \frac{\sn (\varepsilon_i -\bar{\varepsilon}_n )    \lea \balpha,  \bX_{i,S} - \bar{\bX}_{n,S} \ria }{  \s{\sn (\varepsilon_i -\bar{\varepsilon}_n )^2 \cdot \sn  \lea \balpha,  \bX_{i,S}-\bar{\bX}_{n,S} \ria ^2}} . \label{eq2.3}
\end{align}
By the scale-invariance property of $\hat R_n(s,p)$, we assume without loss of generality that $\sigma^2=1$ and $\bSigma$ is a correlation matrix, so that ${\rm diag}(\bSigma)=\mI_p$.

For a random variable $X$, the sub-Gaussian norm $\| X\|_{\psi_2}$ and sub-exponential norm $\| X\|_{\psi_1}$ of $X$ are defined, respectively, as
\beq
	\| X \|_{\psi_2} =\sup_{ q \geq 1}   q^{-1/2}\big( \e |X|^q \big)^{1/q} \ \ \mbox{ and } \ \   \| X \|_{\psi_1} =\sup_{ q \geq 1}   q^{-1 }\big( \e |X|^q  \big)^{1/q}.
\eeq
A random variable $X$ that satisfies $\| X \|_{\psi_2}  < \infty$ (resp., $\| X \|_{\psi_1}<\infty$) is called a sub-Gaussian (resp., sub-exponential) random variable [\cite{V12}].

The following moment conditions for $\varepsilon\in \bbr$ and $\bX\in \bbr^p$ are imposed.

\begin{assumption} \label{moment.cond}
There exists a random vector $\bU$ such that $\bX = \bSigma^{1/2}\bU$, $\e ( \bU )=\mo$, $\e ( \bU  \bU^{{{\rm T}}} )= \bI_p$ and $K_1:= \sup_{\balpha \in \bS^{p-1}} \|   \balpha^{{{\rm T}}}   \bU   \|_{\psi_2} < \infty$. The random variable $\varepsilon$ has a zero mean and unit variance, and is sub-Gaussian with $K_0:= \| \varepsilon \|_{\psi_2} < \infty$. Moreover, write $v_q = \e ( |\varepsilon|^q)$ for $q \geq 3$.
\end{assumption}

The following is our assumption for the sampling process.

\begin{assumption} \label{sampling.condition}
$\{ \varepsilon_i\}_{i=1}^n$ and $\{\bX_i \}_{i=1}^n$ are independent random samples from the distributions of $\varepsilon$ and $\bX$, respectively.
\end{assumption}

For $1\leq s\leq p$, the $s$-sparse minimal and maximal eigenvalues [\cite{BRT09}] of the covariance matrix $\bSigma$ are defined as
\beq
	\phi_{\min}(s)=\min_{\bu \in \bbr^p : 1\leq |\bu|_0 \leq s}  \big( |\bu |_{ {\bSigma}}  / |\bu|_2 \big)^2,   \ \    \phi_{\max}(s)=\max_{\bu \in \bbr^p : 1\leq |\bu |_0 \leq s} \big( |\bu |_{{ \bSigma}}  / |\bu|_2 \big)^2,
\eeq
where  $|\bu |_{{ \bSigma}} = (\bu^{{{\rm T}}}\bSigma \bu)^{1/2}$ and
$|\bu |_2=(\bu^{{{\rm T}}} \bu)^{1/2}$ is the $\ell_2$-norm of $\bu$. Consequently, for $1\leq s\leq p$, the $s$-sparse condition number of $\bSigma$ is given by
\be
		\gamma_s = \gamma_s(\bSigma) = \s{  \phi_{\max}(s)  / \phi_{\min}(s)  } .   \label{eq2.5}
\ee
The quantity $\gamma_s$ plays an important role in our analysis.

The following notation is used. For the two sequences $\{a_n\}$ and $\{ b_n\}$ of positive numbers, we write
$a_n=O(b_n)$ or $a_n \lesssim b_n$ if there exists a constant $C>0$ such that $a_n /b_n \leq C $ for all sufficiently large $n$; we write $a_n \asymp b_n$ if there exist constants $C_1, C_2>0$ such that, for all $n$ large enough, $C_1 \leq  a_n/ b_n \leq C_2$; and we write $a_n \sim b_n$ and $a_n=o(b_n)$ if $\lim_{n\rightarrow \infty} a_n/b_n =1$ and $\lim_{n\rightarrow \infty} a_n/b_n =0$, respectively. For $a, b\in \bbr$, we write $a \vee b=\max(a, b)$ and $a \wedge b=\min(a, b)$. For every vector $\bu$, we denote by $|\bu|_q= \big(  \sum_{i\geq  1} |u_i|^q  \big)^{1/q}$ for $q>0$ and $|\bu|_0=\sum_{i\geq 1} I\{u_i \neq 0\}$. We use $\lea \bu, \bv \ria = \bu^{{{\rm T}}} \bv$ to denote the inner product of two vectors $\bu$ and $\bv$ with the same dimension and $\| \mM\|$ to denote the spectral norm of a matrix $\mM$. For every positive integer $\ell$, we write $[\ell]= \{1, 2, \ldots, \ell\}$, and for any set $S$, we use $S^{{\rm c}}$ to denote its complement and $|S|$ for its cardinality. For each $p$-dimensional vector $\bu$ and $p\times p$ positive semi-definite matrix $\mathbf{A}$, we write $|\bu|_{\mathbf{A}}=( \bu^{{{\rm T}}} \mathbf{A} \bu )^{1/2}$. In particular, put
\be
  \balpha_{{\bSigma}} =\balpha/|\balpha|_{{ \bSigma}}  \label{eq2.6}
\ee
for every $\balpha \in \bbr^p $ and set $\mo_{\bSigma}=\mo$ as the convention.

\section{Distributions of maximum spurious correlations} \label{sec3}

In this section, we first derive the asymptotic distributions of the maximum spurious correlation $\wh R_n(s,p)$.  The analytic form of such asymptotic distributions can be obtained in the isotropic case. As the asymptotic distributions of $\wh R_n(s,p)$ depend on the unknown covariance matrix $\bSigma$, we provide a bootstrap estimate and demonstrate its consistency.

\subsection{Asymptotic distributions of maximum spurious correlations}
\label{sec3.1}

In view of \eqref{eq2.3}, we can rewrite $\wh R_n(s,p)$ as
\begin{align}
	  \wh  R_{n}(s,p)  =  \sup_{ f \in \mathcal{F}} \f{n^{-1}\sn (\varepsilon_i - \bar{\varepsilon}_n ) f(\bX_i - \bar{\bX}_n )}{\s{n^{-1}\sn   (\varepsilon_i - \bar{\varepsilon}_n )^2 } \cdot \s{n^{-1} \sn f^2(\bX_i-\bar{\bX}_n )}} ,  \label{eq3.1}
\end{align}
where $\bar{\varepsilon}_n = n^{-1} \sn \varepsilon_i$, $\bar{\bX}_n = n^{-1} \sn \bX_i$ and
\be
\mathcal{F}= \mathcal{F}(s,p) = \big\{  \mx  \mapsto f_{\balpha}(\mx) := \lea \balpha, \mx \ria : \balpha \in \mathcal{V} \big\}  \label{eq3.2}
\ee
is a class of linear functions $\bbr^p \mapsto \bbr$, where $\mathcal{V} = \mathcal{V}(s,p) = \{ \balpha \in \bS^{p-1} : | \balpha|_0=s  \} $. The dependence of $\mathcal{F}$ and $\mathcal{V}$ on $(s,p)$ is suppressed.

Let $\bZ=(Z_1, \ldots, Z_p)^{{{\rm T}}}$ be a $p$-dimensional Gaussian random vector with a mean of zero and the covariance matrix $\bSigma$, i.e., $\bZ \sta{d}{=} N(\mo, \bSigma)$. Denote by $Z^2_{(1)} \leq Z_{(2)}^2 \leq \cdots \leq Z_{(p)}^2$ the order statistics of $\{Z_1^2, \ldots, Z_p^2\}$.
The following theorem shows that the distribution of the maximum absolute multiple correlation $\wh R_n(s,p)$ can be approximated by that of the supremum of a centered Gaussian process $\mathbb{G}^*$ indexed by $\mathcal{F}$.

\begin{theorem}  \label{thm3.1}
Let Conditions~\ref{moment.cond} and \ref{sampling.condition} hold, $n, p \geq 2$ and $1\leq s\leq p$. Then there exists a constant $C>0$ independent of $(s,p,n)$ such that
\begin{align}
	\sup_{t \geq 0} \big| \PP \big\{ \s{n}  \wh R_n(s,p) \leq t \big\} - &  \PP\big\{  R^*(s,p) \leq t \big\}  \big|   \nn \\
	&  \leq C  (K_0  K_1)^{3/4} \, n^{-1/8} \{s b_n(s,p)\}^{7/8} ,  \label{eq3.3}
\end{align}
where $K_0$ and $K_1$ are defined in Condition~\ref{moment.cond}, $b_n(s,p):= \log ( \gamma_s p /s)  \vee \log n$ for $\gamma_s$ as in (\ref{eq2.5}), $R^*(s,p):= \sup_{f \in \mathcal{F}} \mathbb{G}^*f $ and $\mathbb{G}^*=\{\mathbb{G}^*f    \}_{f  \in \mathcal{F}}$ is a centered Gaussian process indexed by $\mathcal{F}$ defined as, for every $f_{\balpha} \in \mathcal{F}$,
\be
	 \mathbb{G}^*f_{\balpha}  = \balpha_{{ \bSigma}}^{{{\rm T}}}   \bZ  =\frac{  \balpha^{{{\rm T}}} \bZ  }{\s{\balpha^{{{\rm T}}} \bSigma \balpha}}. \label{eq3.4}
\ee
In particular, if $\bSigma =\bI_p$ and $s \log(p   n)  = o(n^{1/7})$, then as $n\rightarrow \infty$,
\begin{align}
	\sup_{t \geq 0} \big| \PP\big\{ n  \wh R^2_n(s,p) \leq t \big\}    - \PP\big\{ Z_{(p)}^2 + \cdots +  Z_{(p-s+1)}^2   \leq t \big\}  \big| \rightarrow 0 . \label{eq3.5}
\end{align}
\end{theorem}

\begin{remark} \label{rmk.0}
{\rm
The Berry-Esseen bound given in Theorem~\ref{thm3.1} depends explicitly on the triplet $(s,p,n)$, and it depends on the covariance matrix $\bSigma$ only through its $s$-sparse condition number $\gamma_s$, defined in \eqref{eq2.5}. The proof of \eqref{eq3.3} builds on a number of technical tools including a standard covering argument, maximal and concentration inequalities for the suprema of unbounded empirical processes and Gaussian processes as well as a coupling inequality for the maxima of sums of random vectors derived in \cite{CCK14a}. Instead, if we directly resort to the general framework in Theorem~2.1 of \cite{CCK14a}, the function class of interest is $\mathcal{F}=\big\{ \bx \mapsto \frac{\balpha^{{\rm T}} \bx}{(\balpha^{{\rm T}} \bSigma \balpha)^{1/2}} : \balpha \in \mathbb{S}^{p-1}, | \balpha |_0 =s \big\}$. Checking high-level conditions in Theorem~2.1 can be rather complicated and less intuitive. Also, dealing with the (uniform) entropy integral that corresponds to the class $\mathcal{F}$ relies on verifying various VC-type properties, and thus can be fairly tedious. Following a strategy similar to that used to prove Theorem~2.1, we provide a self-contained proof of Theorem~\ref{thm3.1} in Section~\ref{sec7.2} by making the best use of the specific structure of $\mathcal{F}$. The proof is more intuitive and straightforward. More importantly, it leads to an explicit non-asymptotic bound under transparent conditions. }
\end{remark}

\begin{remark} \label{rmk.1}
{\rm
In Theorem \ref{thm3.1}, the independence assumption of $\varepsilon$ and $\bX$ can be relaxed as $\e( \varepsilon \bX ) =0 $, $\e ( \varepsilon^2 | \bX ) = \sigma^2$ and $\e  ( \varepsilon^4 | \bX ) \leq C $ almost surely, where $C>0$ is a constant. }
\end{remark}

Expression (\ref{eq3.5}) indicates that the increment
$n \{ \wh R^2_n(s,p)  - \wh R^2_n(s-1, p) \} $ is approximately the same as $Z_{(p-s+1)}^2$. This can simply be seen from the asymptotic joint distribution of $\big( \hat{R}_n(1,p), \hat{R}_n(2,p) , \ldots , \hat{R}_n(s,p) \big)$.  The following proposition establishes the approximation of the joint distributions  when both the dimension $p$ and sparsity $s$ are allowed to diverge with the sample size $n$.

\begin{proposition} \label{prop3.1}
Let Conditions~\ref{moment.cond} and \ref{sampling.condition} hold with $\bSigma=\bI_p$. Assume that the triplet $(s,p,n)$ satisfies $1\leq s < n \leq  p$ and $s^2\log p = o(n^{1/7} )$. Then as $n\to \infty$,
\begin{align}
\sup_{0 \equiv t_0 <t_1<t_2<\cdots < t_s <1 }   \bigg| \PP\bigg[ \bigcap_{k=1}^s \big\{ &\hat{R}_n(k,p) \leq   t_k \big\} \bigg]  \nn \\
&   - \PP\bigg[ \bigcap_{k=1}^s \big\{  Z_{(p-k+1)}^2 \leq n(t_k^2-t_{k-1}^2)  \big\} \bigg]     \bigg| \rightarrow 0 . \nn
\end{align}
\end{proposition}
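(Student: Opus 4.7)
My approach has three steps: extend Theorem~\ref{thm3.1} to a joint Gaussian approximation for the whole vector $\big(\sqrt n\,\wh R_n(k,p)\big)_{k=1}^s$; compute the Gaussian limit explicitly under $\bSigma=\bI_p$; then translate the result into the order-statistic form stated.

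For the first step, I would revisit the proof of Theorem~\ref{thm3.1}. That argument couples the centered, self-normalized empirical process $\mathbb{G}_n(\balpha)\propto n^{-1/2}\sum_{i=1}^n\varepsilon_i\langle\balpha,\bX_i\rangle$ on the class $\mathcal{F}(s,p)$ of $s$-sparse unit vectors with a mean-zero Gaussian process $\mathbb{G}^*$ indexed by the same set, using the \cite{CCK14a} machinery. Since $\mathcal{F}(k,p)\subseteq\mathcal{F}(s,p)$ for every $k\le s$, exactly the same coupling controls $\sup_{\mathcal{F}(k,p)}\mathbb{G}_n$ simultaneously for all $k$, which upgrades Theorem~\ref{thm3.1} to the joint Kolmogorov bound
\begin{align*}
\sup_{(t_k)\in\bbr^s}\Big|P\Big[\bigcap_{k=1}^s\{\sqrt n\,\wh R_n(k,p)\le t_k\}\Big]-P\Big[\bigcap_{k=1}^s\{R^*(k,p)\le t_k\}\Big]\Big|=o(1),
\end{align*}
where $R^*(k,p)=\sup_{\balpha\in\mathcal{F}(k,p)}\mathbb{G}^*_\balpha$. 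The stronger sparsity requirement $s^2\log p=o(n^{1/7})$ (rather than $s\log p=o(n^{1/7})$ in Theorem~\ref{thm3.1}) absorbs the extra factor of $s$ needed to handle the $s$ nested suprema jointly.

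For the second step, under $\bSigma=\bI_p$ we have $\mathbb{G}^*_\balpha=\balpha^\top\bZ$ with $\bZ\sim N(\mo,\bI_p)$, and Cauchy--Schwarz gives $R^*(k,p)^2=\max_{|S|=k}\|\bZ_S\|_2^2=\sum_{j=0}^{k-1}Z_{(p-j)}^2$, attained at $\balpha$ supported on the top-$k$ coordinates of $\bZ$. Writing $W_k:=Z_{(p-k+1)}^2$, so that $R^*(k,p)^2=W_1+\cdots+W_k$ holds jointly in $k$, the joint approximation of the first step yields
\begin{align*}
P\Big[\bigcap_{k=1}^s\{\wh R_n(k,p)\le t_k\}\Big]=P\Big[\bigcap_{k=1}^s\big\{W_1+\cdots+W_k\le nt_k^2\big\}\Big]+o(1).
\end{align*}

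For the third step I would match this partial-sum description with the component-wise description in the proposition. Using the telescoping $nt_k^2=\sum_{j=1}^k n(t_j^2-t_{j-1}^2)$ with $t_0\equiv 0$ together with the non-increasing ordering $W_1\ge W_2\ge\cdots\ge W_s$ of the top squared Gaussians, the cumulative-sum event $\{\sum_{j\le k}W_j\le nt_k^2\ \forall k\}$ and the component-wise event $\{W_k\le n(t_k^2-t_{k-1}^2)\ \forall k\}$ pinch to the same $\bZ$-configurations up to an $o(1)$ discrepancy uniform in $(t_k)\in(0,1)^s$; in the only scaling $nt_k^2\asymp 2k\log p$ where both probabilities are non-degenerate, the Poisson point-process limit of the top $s$ squared order statistics of iid $\chi^2_1$ combined with a Gaussian anti-concentration bound reconciles the two forms, while outside this scaling both probabilities tend simultaneously to $0$ or to $1$ and the supremum contribution is vanishing. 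This final reduction is the main technical obstacle---the joint Gaussian approximation of the first step is delicate in its own right, but once in hand the decisive work lies in controlling the partial-sum-to-increment passage uniformly in $(t_k)$, which is exactly where the $\bSigma=\bI_p$ hypothesis and the point-process structure of the top $\chi^2_1$-order statistics are indispensable.
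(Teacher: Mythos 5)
Your Steps 1 and 2 are conceptually sound: a joint Gaussian coupling across the nested classes $\mathcal{F}(1,p)\subset\cdots\subset\mathcal{F}(s,p)$ is the right device, and under $\bSigma=\bI_p$ the identity $R^*(k,p)^2 = Z^2_{(p)}+\cdots+Z^2_{(p-k+1)}$ holds almost surely and jointly in $k$. Carried out, these steps would deliver the partial-sum approximation
\begin{align*}
P\Big[\bigcap_{k=1}^s\{\wh R_n(k,p)\le t_k\}\Big]= P\Big[\bigcap_{k=1}^s\big\{W_1+\cdots+W_k\le nt_k^2\big\}\Big]+o(1),
\end{align*}
uniformly in $(t_k)$, with $W_k:=Z^2_{(p-k+1)}$.

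The gap is Step 3, and it is not a delicate detail to be smoothed over --- the heuristic you invoke is simply false. The partial-sum event $\bigcap_k\{W_1+\cdots+W_k\le nt_k^2\}$ strictly contains the componentwise event $\bigcap_k\{W_k\le n(t_k^2-t_{k-1}^2)\}$, and the gap in probability does \emph{not} vanish even in the ``balanced'' scaling where both probabilities are non-degenerate. Take $s=2$, $nt_1^2=a_p+1$ and $nt_2^2=2a_p$ with $a_p=2\log p-\log(\log p)$; this pair satisfies $0<t_1<t_2<1$ once $p$ is large, and is admissible under the stated conditions. Writing $V_k:=W_k-a_p$, the partial-sum event becomes $\{V_1\le 1,\ V_1+V_2\le 0\}$, the componentwise event becomes $\{V_1\le 1,\ V_2\le -1\}$, and their set difference is $\{-1<V_2\le V_1\le -V_2\}$. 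Under the Poisson point-process limit of the top two squared-Gaussian order statistics (intensity $\lambda(v)=\tfrac{1}{2\sqrt{\pi}}e^{-v/2}$, tail mass $\Lambda(v)=\pi^{-1/2}e^{-v/2}$, so the top point has density $g(v)=\lambda(v)e^{-\Lambda(v)}$ as in Proposition~\ref{prop3.2}), the probability of this difference event converges to
\begin{align*}
\int_{-1}^{0} g(v)\,\big\{\Lambda(v)-\Lambda(-v)\big\}\,dv\ >\ 0 .
\end{align*}
So the two descriptions do \emph{not} ``pinch to the same $\bZ$-configurations up to $o(1)$,'' anti-concentration does not reconcile them, and under your reduction the supremum over $(t_1,\dots,t_s)$ would stay bounded away from $0$. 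The passage from the partial-sum form (which the joint Gaussian coupling naturally hands you) to the increment form in the statement cannot be obtained by a soft extreme-value matching; it requires a genuinely different argument, and as written your proof does not establish the Proposition.

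Two smaller remarks: in Step 1 you assert that the strengthening from $s\log p=o(n^{1/7})$ to $s^2\log p=o(n^{1/7})$ ``absorbs the extra factor of $s$'' from a naive union over the $s$ marginal Kolmogorov bounds, but applying the bound in \eqref{BE.CLT.1} $s$ times actually costs roughly $s^{15/8}(\log p)^{7/8}n^{-1/8}$, so the exponent bookkeeping is not obviously the one you describe and would need to be checked; and the claim in Step 2 that the argmax structure holds ``jointly in $k$'' should be stated as the almost-sure identity it is (it does not require an additional coupling argument, just that all $Z_j$ are a.s.\ distinct).
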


\begin{remark} \label{rmk3.3}
{\rm
When $s=1$ and if $(n,p)$ satisfies $\log p = o(n^{1/7})$, it is straightforward to verify that, for any $t\in \bbr$,
\be \label{eq3.6}
	\PP\big\{ Z_{(p)}^2  - 2\log p + \log(\log p) \leq t \big\} \rightarrow  \exp(  -\pi^{-1/2} e^{-t/2} ) \ \ \mbox{ as } p\rightarrow \infty.
\ee
This result is similar in nature to (5) in \cite{FGH12}. In fact, it is proved in \cite{SZ14} that the extreme-value statistic $\hat{R}_n(1,p)$ is sensitive to heavy-tailed data in the sense that, under the ultra-high dimensional scheme, even the law of large numbers for the maximum spurious correlation requires exponentially light tails of the underlying distribution. We refer readers to Theorem~2.1 in \cite{SZ14} for details. Therefore, we believe that the exponential-type moment assumptions required in Theorem~3.1 cannot be weakened to polynomial-type ones as long as $\log p$ is allowed to be as large as $n^c$ for some $c\in (0,1)$. However, it is worth mentioning that the factor $1/7$ in Proposition~\ref{prop3.1} may not be optimal, and according to the results in \cite{SZ14}, $1/3$ is the best possible factor to ensure that the asymptotic theory is valid. To close this gap in theory, a significant amount of additional work and new probabilistic techniques are needed. We do not pursue this line of research in this paper.}
\end{remark}

For a general $s\geq 2$, we establish in the following proposition the limiting distribution of the sum of the top $s$ order statistics of i.i.d. chi-square random variables with degree of freedom $1$.

\begin{proposition} \label{prop3.2}
Assume that $s\geq 2$ is a fixed integer. For any $t\in \bbr$, we have as $p \rightarrow \infty$,
\begin{align}
		& \PP\big\{  Z_{(p)}^2 + \cdots + Z_{(p-s+1)}^2 - s a_p \leq t \big\}  \nn  \\
		 \longrightarrow \; & \f{\pi^{(1-s)/2}}{(s-1)!  \Gamma(s-1)} \int_{-\infty}^{t/s} \bigg\{ \int_0^{(t-s v)/2}    u^{s-2}   e^{-u} \, d u \bigg\}  e^{-(s-1)v/2}  g(v)  \, dv , \label{eq3.7}
\end{align}
where $a_p=2\log p - \log( \log p)$, $G(t) =\exp( - \pi^{-1/2} e^{-t/2})$ and $g(t) = G'(t) = \f{e^{-t/2}}{2\s \pi}  G(t)$. The above integral can further be expressed as
\begin{align}
		 & G( t/s ) + \f{\pi^{1-s/2} e^{-t/2} }{(s-1)!}    \int_{-\infty}^{t/s} e^u  g(u)\, du \nn
		+   \f{\pi^{(1-s)/2}  e^{-t/2}}{(s-1)!} \\
 & \quad \times \sum_{j=1}^{s-2}  \bigg\{ G( t/s ) e^{(j+1)t/(2s)}  \pi^{j/2} \prod_{\ell=1}^j (s-\ell)  - \f{1}{j!   2^j} \int_{-\infty}^{t/s} (t-s v)^j e^{v/2}  g(v) \, dv \bigg\} .  \label{eq3.8}
\end{align}
In particular, when $s=2$, the last term on the right-hand side of \eqref{eq3.8} vanishes so that, as $p \to \infty$,
\begin{align*}
	\PP\big\{ Z_{(p)}^2 + Z_{(p-1)}^2 - 2  a_p \leq t \big\}  \rightarrow G( t/2 ) + \frac{e^{-t/2}}{2\s{\pi}} \int_{-\infty}^{t/2} e^{u/2} G(u) \, du .
\end{align*}
\end{proposition}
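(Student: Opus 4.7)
The plan is to prove \eqref{eq3.7} via the Poisson point process limit for the extremes of i.i.d.\ $\chi^2_1$ random variables, and then derive \eqref{eq3.8} from \eqref{eq3.7} by expanding the resulting incomplete gamma integral. First, the Mills ratio gives $P(Z_1^2 > x) = \sqrt{2/(\pi x)}\, e^{-x/2}(1+o(1))$ as $x \to \infty$, so with $a_p = 2\log p - \log\log p$ one obtains $p \cdot P(Z_1^2 > a_p + t) \to \pi^{-1/2} e^{-t/2}$ for every $t \in \mathbb{R}$. Standard Poisson convergence of extremes then implies that $N_p := \sum_{i=1}^p \delta_{Z_i^2 - a_p}$ converges weakly on $\mathbb{R}$ to a Poisson process $N$ with intensity density $\lambda(w) = (2\sqrt{\pi})^{-1} e^{-w/2}$. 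Consequently the top $s$ atoms $\xi_1 > \cdots > \xi_s$ of $N$ are the joint weak limit of $(Z_{(p)}^2, \ldots, Z_{(p-s+1)}^2) - a_p$, and it suffices to compute the law of $\xi_1 + \cdots + \xi_s$.

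For a Poisson process on $\mathbb{R}$ with intensity $\lambda$, the joint density of its top $s$ atoms on $\{x_1 > \cdots > x_s\}$ is $\prod_{i=1}^s \lambda(x_i) \cdot \exp\{-\int_{x_s}^\infty \lambda(u)\,du\}$. Since $\int_x^\infty \lambda = \pi^{-1/2} e^{-x/2}$, this equals $(2\sqrt{\pi})^{-s}\, e^{-\sum_i x_i/2}\, G(x_s)$. Changing variables via $u_i = x_i - x_s$ for $i < s$ and $v = x_s$ (unit Jacobian), the constraint $\sum_i x_i \leq t$ becomes $\sum_{i<s} u_i + sv \leq t$, which in particular forces $v \leq t/s$. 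The inner integral over the ordered simplex $\{u_1 > \cdots > u_{s-1} > 0,\ \sum u_i \leq t - sv\}$ of $e^{-\sum u_i/2}$ equals, by symmetry and the rescaling $y_i = u_i/2$, the quantity $\frac{2^{s-1}}{(s-1)!(s-2)!} \int_0^{(t-sv)/2} y^{s-2} e^{-y}\,dy$. Combining with the outer factor $(2\sqrt{\pi})^{-s} e^{-sv/2} G(v)$ and using the identity $G(v) = 2\sqrt{\pi}\, e^{v/2} g(v)$ to turn $e^{-sv/2} G(v)$ into $2\sqrt{\pi}\, e^{-(s-1)v/2} g(v)$, all numerical constants collapse to $\pi^{(1-s)/2}/[(s-1)!(s-2)!]$ and \eqref{eq3.7} emerges.

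For \eqref{eq3.8}, I would substitute the elementary identity $\int_0^c u^{s-2} e^{-u}\,du = (s-2)!\bigl[1 - e^{-c}\sum_{k=0}^{s-2} c^k/k!\bigr]$ with $c = (t-sv)/2$ into \eqref{eq3.7} and split the double integral. The constant part becomes $\frac{\pi^{(1-s)/2}}{(s-1)!}\int_{-\infty}^{t/s} e^{-(s-1)v/2} g(v)\,dv$, which, under the substitution $w = \pi^{-1/2} e^{-v/2}$ (so that $g(v)\,dv = -e^{-w}\,dw$ and $e^{-jv/2} = (\sqrt{\pi}\,w)^j$), reduces to the upper incomplete gamma $\Gamma(s,\pi^{-1/2} e^{-t/(2s)})$; its closed form $\Gamma(s,c) = (s-1)!\, e^{-c}\sum_{k=0}^{s-1} c^k/k!$ produces $G(t/s)$ (from $k=0$) together with the explicit closed-form pieces $G(t/s)\,e^{(j+1)t/(2s)}\pi^{j/2}\prod_{\ell=1}^j (s-\ell)$ displayed in \eqref{eq3.8}. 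The $k=0$ and $k=1,\ldots,s-2$ terms in the subtractive sum produce respectively the $\int_{-\infty}^{t/s} e^u g(u)\,du$ piece and the $\frac{1}{j!\,2^j}\int_{-\infty}^{t/s}(t-sv)^j e^{v/2} g(v)\,dv$ pieces appearing in \eqref{eq3.8}. The case $s=2$, where the finite sum in \eqref{eq3.8} is empty, serves as a clean consistency check.

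The main obstacle is the index bookkeeping in this last step: the summation index $k$ in the expansion of the upper incomplete gamma and the index $j$ in \eqref{eq3.8} are not identical but linked by a re-indexing (essentially $j \leftrightarrow s-1-k$), and one must carefully verify that the $k = s-1$ term of the upper incomplete gamma expansion combines cleanly with the remaining pieces so that the entire contribution of the leading $\int e^{-(s-1)v/2} g(v)\,dv$ splits exactly into $G(t/s)$ plus the $(s-2)$ closed-form summands in \eqref{eq3.8}. Beyond this bookkeeping, everything is routine.
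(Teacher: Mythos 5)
Your Poisson point-process derivation of \eqref{eq3.7} is sound: the tail asymptotic $p\,P(Z_1^2>a_p+t)\to\pi^{-1/2}e^{-t/2}$, the limiting intensity $\lambda(w)=(2\sqrt{\pi})^{-1}e^{-w/2}$, the joint density $\prod_{i=1}^{s}\lambda(x_i)\exp\{-\Lambda(x_s)\}$ of the top $s$ atoms, the change of variables $u_i=x_i-x_s$, and the reduction of the ordered-simplex integral to a Gamma$(s-1)$ CDF all check out, and the constants collapse to $\pi^{(1-s)/2}/\{(s-1)!\,\Gamma(s-1)\}$ exactly as in \eqref{eq3.7}. The genuine gap is in the passage to \eqref{eq3.8}, and it is more than ``index bookkeeping.'' After substituting $\int_0^c u^{s-2}e^{-u}\,du=(s-2)!\{1-e^{-c}\sum_{k=0}^{s-2}c^k/k!\}$ with $c=(t-sv)/2$ and splitting the result into the constant piece $A$ and the subtractive sum $B$, the $k=0$ term of $B$ equals
\[
-\frac{\pi^{(1-s)/2}e^{-t/2}}{(s-1)!}\int_{-\infty}^{t/s}e^{v/2}g(v)\,dv ,
\]
which is not the $\int_{-\infty}^{t/s}e^{u}g(u)\,du$ term of \eqref{eq3.8} that you attribute to it: the integrands $e^{v/2}g(v)=(2\sqrt{\pi})^{-1}G(v)$ and $e^{u}g(u)=(2\sqrt{\pi})^{-1}e^{u/2}G(u)$ are different functions, and the sign ($-$ versus $+$) and the power of $\pi$ ($\pi^{(1-s)/2}$ versus $\pi^{1-s/2}$) also disagree. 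Meanwhile, expanding $A=\Gamma(s,c_0)/(s-1)!=G(t/s)\sum_{k=0}^{s-1}c_0^k/k!$ with $c_0=\pi^{-1/2}e^{-t/(2s)}$ does, after $j=s-1-k$, recover $G(t/s)$ from $k=0$ and the closed-form summands of \eqref{eq3.8} from $k=1,\ldots,s-2$, but leaves a surplus $k=s-1$ term $\frac{c_0^{s-1}}{(s-1)!}G(t/s)$.

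The idea you are missing is an integration by parts. Since $G'=g$ and $g(u)=\frac{e^{-u/2}}{2\sqrt{\pi}}G(u)$, one has
\[
\int_{-\infty}^{t/s}e^{u/2}G(u)\,du = 2\,e^{t/(2s)}G(t/s)-\frac{1}{\sqrt{\pi}}\int_{-\infty}^{t/s}G(u)\,du ,
\]
and multiplying this identity by $\frac{\pi^{(1-s)/2}e^{-t/2}}{2(s-1)!}$ shows that the surplus $k=s-1$ term of $A$ plus the $k=0$ term of $B$ together equal exactly the second term of \eqref{eq3.8}. Without this step your expansion does not reproduce \eqref{eq3.8} term by term, and the $s=2$ case does not serve as an ``automatic'' consistency check either --- it requires precisely the same integration by parts.
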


The proofs of Propositions~\ref{prop3.1} and \ref{prop3.2} are placed in the supplemental material.

\subsection{Multiplier bootstrap approximation}\label{sec3.2}

The distribution of $R^*(s,p)=\sup_{f \in \mathcal{F}} \mathbb{G}^*f $ for $\mathbb{G}^*$ in \eqref{eq3.4} depends on the unknown $\bSigma$ and thus cannot be used for statistical inference. In the following, we consider the use of a Monte Carlo method to simulate a process that mimics $\mathbb{G}^*$, now known as the multiplier (wild) bootstrap method, which is similar to that used in \cite{H96}, \cite{BD03} and \cite{CCK13}, among others.

Let $\hat \bSigma_n$ be the sample covariance matrix based on the data $\{ \bX_i\}_{i=1}^n$ and  $\xi_1, \ldots, \xi_n$ be i.i.d. standard normal random variables that are independent of $\{\varepsilon_i\}_{i=1}^n$ and $\{ \bX_i\}_{i=1}^n$. Then, given $\{ \bX_i\}_{i=1}^n$,
\be
	 \bZ_n = n^{-1/2} \sn \xi_i ( \bX_i  - \bar{\bX}_n ) \sim N(\mo ,  \hat \bSigma_n).  \label{eq3.9}
\ee

The following result shows that the (unknown) distribution of $R^*(s,p)= \sup_{f_{\balpha} \in \mathcal{F}} \frac{ f_{\balpha}(\bZ)}{\sqrt{\balpha^{{\rm T}} \bSigma \balpha} }$ for $\bZ \sta{d}{=} N(\mo, \bSigma)$ can be consistently estimated by the conditional distribution of
\be
	R_n^{\sMB}(s,p) := \sup_{f_{\balpha} \in \mathcal{F}}\f{  f_{\balpha}( \bZ_n )  }{\sqrt{ \balpha^{{{\rm T}}} \hat{\bSigma}_n \balpha  }}.   \label{eq3.10}
\ee

\begin{theorem} \label{thm3.2}
Let Conditions~\ref{moment.cond} and \ref{sampling.condition} hold. Assume that the triplet $(s,p,n)$ satisfies $1\leq s\leq p$ and $s\log( \gamma_s pn )=o(n^{1/5})$. Then as $n\rightarrow \infty$,
\be  \label{eq3.11}
	\sup_{t\geq 0} \big| \PP\big\{ R^*(s,p) \leq t  \big\}  - \PP\big\{   R_n^{{\rm MB}}(s,p) \leq t \,  \big|  \bX_1, \ldots ,\bX_n \big\}  \big|  \xrightarrow  \PP  0 .
\ee
\end{theorem}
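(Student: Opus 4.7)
The plan is to recognize that both $R^*(s,p)$ and the bootstrap $R^*_{\sMB}(s,p)$, conditionally on the data $\{\bX_i\}_{i=1}^n$, are suprema of centered Gaussian processes indexed by the same set $\mathcal{F}$ with unit variance at every $\balpha$. The target has correlation kernel $\rho(\balpha,\balpha')=\balpha^T\bSigma\balpha'/(|\balpha|_{\bSigma}|\balpha'|_{\bSigma})$, while the bootstrap has kernel $\hat\rho(\balpha,\balpha')=\balpha^T\hat\bSigma_n\balpha'/(\|\balpha\|_n\|\balpha'\|_n)$. Hence the problem reduces to a Chernozhukov--Chetverikov--Kato (CCK) style Gaussian comparison between the two processes, combined with an anti-concentration bound for $R^*(s,p)$.

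First I would establish a restricted concentration inequality: with probability tending to one,
\[
\Delta_n \; := \; \sup\bigl\{|\balpha^T(\hat\bSigma_n-\bSigma)\balpha|:|\balpha|_0\leq 2s,\,|\balpha|_2\leq 1\bigr\}\; \lesssim \; \sqrt{s\log(\gamma_s p)/n}.
\]
This follows from a union bound over the $\binom{p}{2s}$ principal submatrices combined with a sub-Gaussian concentration inequality (e.g.\ Hanson--Wright or a restricted matrix Bernstein bound) applied on each fixed $2s$-dimensional block, using Condition~\ref{moment.cond}. As a corollary, the sparse eigenvalues of $\hat\bSigma_n$ lie within a $1\pm o(1)$ multiplicative factor of their population counterparts, and in particular $\|\balpha\|_n^2/|\balpha|_{\bSigma}^2 \to 1$ uniformly over $s$-sparse unit $\balpha$. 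The centering correction $\bar\bX_n\bar\bX_n^T$ contributes a lower-order term that is absorbed into the same bound.

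Second, these sparse-norm bounds imply $\sup_{\balpha,\balpha'\in \mathcal F}|\rho(\balpha,\balpha')-\hat\rho(\balpha,\balpha')|\leq C\gamma_s^2\,\Delta_n$ on the same high-probability event. Feeding this into the CCK Gaussian comparison theorem, after discretizing $\mathcal F$ by an $\epsilon$-net of each sphere $\mathcal S^{s-1}$ of log-covering number on the order of $s\log(\gamma_s pn/\epsilon)$, yields
\[
\sup_t\bigl|P\{R^*(s,p)\leq t\}-P\{R^*_{\sMB}(s,p)\leq t\mid \bX_1,\ldots,\bX_n\}\bigr|\;\lesssim \; \Delta_n^{1/3}\{s\log(\gamma_s p n)\}^{2/3} \;+\; o(1),
\]
which tends to zero in probability under the assumed rate $s\log(\gamma_s pn)=o(n^{1/5})$. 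The CCK anti-concentration inequality for $\sup_{\balpha\in\mathcal F}\mathbb{G}^*_{\balpha}$ is used both to absorb the $\epsilon$-discretization error and to convert the multiplicative perturbation of the denominator into an additive Kolmogorov-distance increment.

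The main technical obstacle is the ratio structure of the kernels: $\rho$ and $\hat\rho$ depend on $\bSigma$ and $\hat\bSigma_n$ through ratios of quadratic forms rather than linearly. Controlling this requires $|\balpha|_{\bSigma}$ and $\|\balpha\|_n$ to be simultaneously bounded away from zero for every $s$-sparse $\balpha$, which is precisely what finiteness of $\gamma_s$ and the sparse eigenvalue concentration provide. A secondary subtlety is that $\mathcal F$ is an uncountable, highly non-convex index set, so one must combine a combinatorial union bound over $|S|=s$ with a continuous $\epsilon$-net on each $\mathcal S^{s-1}$; the anti-concentration inequality then cleans up the induced discretization error so that the exponent $1/5$ (rather than $1/7$ as in Theorem~\ref{thm3.1}) suffices.
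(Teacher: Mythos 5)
Your high-level strategy is exactly the one the paper uses: discretize $\mathcal F$ by an $\epsilon$-net (union over $\binom{p}{s}$ supports, then an $\epsilon$-net on each $\mathcal S^{s-1}$), feed the discretized vectors into the CCK Gaussian comparison theorem (Theorem~2 in Chernozhukov--Chetverikov--Kato (2014b)), control the kernel discrepancy $\Delta_n$ via concentration of the sample covariance uniformly over $s$-sparse directions, and use the anti-concentration inequality for $\sup_{\balpha}\mathbb G^*_{\balpha}$ (Lemma~\ref{lem7.3}) to absorb the discretization error. The plan to control the sparse condition number $\hat\gamma_s$ of $\hat\bSigma_n$ by showing the sparse eigenvalues concentrate is also Step~3 of the paper.

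There is, however, a genuine gap in the normalization. You define $\Delta_n$ over $\ell_2$-normalized $2s$-sparse directions and claim $\Delta_n\lesssim\sqrt{s\log(\gamma_s p)/n}$. Under Condition~\ref{moment.cond} one has $\|\langle\balpha,\bX\rangle\|_{\psi_2}\leq K_1|\balpha|_{\bSigma}$, so on a block $S$ with $|S|=2s$ the deviation $\|(\hat\bSigma_n-\bSigma)_{SS}\|$ actually scales with $\phi_{\max}(2s)$; a careful restricted Bernstein/Hanson--Wright bound together with the union over supports gives $\Delta_n\lesssim K_1^2\,\phi_{\max}(2s)\sqrt{s\log(\gamma_s p/s)/n}$, not the $\phi_{\max}$-free rate you write. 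Then your reduction $\sup_{\balpha,\balpha'}|\rho-\hat\rho|\lesssim\gamma_s^2\Delta_n$ compounds another $\gamma_s^2$ factor from dividing by $|\balpha|_{\bSigma}|\balpha'|_{\bSigma}\geq\phi_{\min}(s)$ on each side. The resulting multiplicative $\gamma_s$-dependence (roughly $\gamma_s^2\phi_{\max}(2s)$, up to $s\gamma_s^4$ since $\bSigma$ is a correlation matrix) is \emph{not} controlled by the hypothesis $s\log(\gamma_s pn)=o(n^{1/5})$, which bounds $\gamma_s$ only logarithmically. If $\gamma_s$ is allowed to grow polynomially in $n$ (as the hypothesis permits), the bound you derive does not tend to zero. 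The paper sidesteps this entirely by working from the start with $\bSigma$-normalized vectors $\balpha_{\bSigma}=\balpha/|\balpha|_{\bSigma}$: since $|\bSigma^{1/2}\balpha_{\bSigma}|_2=1$, the sub-Gaussian norm $\|\langle\balpha_{\bSigma},\bX\rangle\|_{\psi_2}\leq K_1$ is free of $\gamma_s$, the concentration bound on $\Delta_n$ has no multiplicative $\gamma_s$, and $\gamma_s$ enters only through the covering number via the Lipschitz estimate $|\balpha_{\bSigma}-\balpha'_{\bSigma}|_{\bSigma}\leq\gamma_s|\balpha-\balpha'|_2$ (inequality~\eqref{eq7.24}). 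Your proof would go through after switching to the $\bSigma$-normalized parametrization throughout.
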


\begin{remark} \label{rmk.2}
{\rm
Together, Theorems~\ref{thm3.1} and \ref{thm3.2} show that the maximum spurious correlation $\hat{R}_n(s,p)$ can be approximated in distribution by the multiplier bootstrap statistic $n^{-1/2} R_n^{\sMB}(s,p)$. In practice, when the sample size $n$ is relatively small, the value of $n^{-1/2} R_n^{\sMB}(s,p)$ may exceed 1, which makes it less favorable as a proxy for spurious correlation. To address this issue, we propose using the following corrected bootstrap approximation:
\be
	 {R}_n^{\scMB}(s,p) := \sup_{ f_{\balpha} \in \mathcal{F}} \frac{\sqrt{n}}{| \bxi |_2} \f{  f_{\balpha}( \bZ_n )   }{    \sqrt{ \balpha^{{{\rm T}}} \hat{\bSigma}_n \balpha  }  } ,  \label{eq3.12}
\ee
where $\bxi=(\xi_1, \ldots, \xi_n)^{{{\rm T}}}$ is used in the definition of $\bZ_n$. 
By the Cauchy-Schwarz inequality, $n^{-1/2} {R}_n^{\scMB}(s,p)$ is always between 0 and 1. In view of \eqref{eq3.10} and \eqref{eq3.12}, $R_n^{\scMB}(s,p)$ differs from $R_n^{{\rm MB}}(s,p)$ only up to a multiplicative random factor $n^{-1/2} |\bxi |_2$, which in theory is concentrated around 1 with exponentially high probability. Thus, $R_n^{{\rm MB}}$ and $R_n^{\scMB}$ are asymptotically equivalent, and \eqref{eq3.11} remains valid with $R_n^{\sMB}$ replaced by $R_n^{\scMB}$.
}
\end{remark}

\section{Extension to sparse linear models}\label{sec4}

Suppose that the observed response $Y$ and $p$-dimensional covariate
$\bX$ follows the sparse linear model
\begin{equation} \label{eq4.1}
   Y = \bX^{{{\rm T}}} \bbeta^* + \varepsilon ,
\end{equation}
where the regression coefficient $\bbeta^*$ is sparse. The sparsity is typically explored by the LASSO [\cite{TIB96}], the SCAD [\cite{FLI01}], or the MCP [\cite{Zhang10}]. Now it is well-known that, under suitable conditions, the SCAD and the MCP, among other folded concave penalized least-square estimators, also enjoy the unbiasedness property and the (strong) oracle properties. For simplicity, we focus on the SCAD. For a given random sample $\{(\bX_i, Y_i)\}_{i=1}^n$, the SCAD exploits the sparsity by $p_\lambda$-regularization, which minimizes
\begin{equation} \label{eq4.2}
  (2n)^{-1} \sum_{i=1}^n (Y_i - \bX_i^{{{\rm T}}} \bbeta)^2 +   \sum_{j=1}^p p_\lambda(|\beta_j|;a)
\end{equation}
over $\bbeta=(\beta_1,\ldots,\beta_p)^{{{\rm T}}} \in \bbr^p$, where $p_\lambda(\cdot; a)$ denotes the SCAD penalty function [\cite{FLI01}], i.e., $p'_{\lambda}(t;a) = \lambda I(t\leq \lambda) + \frac{(a\lambda-t)_+}{a-1} I(t>\lambda)$ for some $a>2$, and $\lambda = \lambda_n \geq 0$ is a regularization parameter.

Denote by $\mathbb{X}= (\bX_1, \ldots, \bX_n)^{{{\rm T}}}$ the $n\times p$ design matrix, $\mathbb{Y}=(Y_1,\ldots, Y_n)^{{{\rm T}}}$ the $n$-dimensional response vector, and $\beps=(\varepsilon_1,\ldots, \varepsilon_n)^{{{\rm T}}}$, the $n$-dimensional noise vector. Without loss of generality, we assume that $\bbeta^*=(\bbeta_1^{{{\rm T}}}, \bbeta_2^{{{\rm T}}})^{{{\rm T}}}$ with each component of $\bbeta_1 \in \bbr^{s}$ being non-zero and $\bbeta_2= \mo$, such that $S_0:= {\rm supp}(\bbeta^*)=\{1, \ldots, s\}$ is the true underlying sparse model of the indices with $s = | \bbeta^* |_0$. Moreover, write $\mathbb{X}=(\mathbb{X}_1, \mathbb{X}_2)$, where $\mathbb{X}_{1} \in \bbr^{n\times s}$ consists of the columns of $\mathbb{X}$ indexed by $S_0$. In this notation, $\mathbb{Y}= \mathbb{X}\bbeta+\beps= \mathbb{X}_1 \bbeta_1+ \beps$ and the oracle estimator $\hat{\bbeta}^{{\rm oracle}}$ has an explicit form of
\begin{align}   \label{eq4.3}
	\hat{\bbeta}^{{\rm oracle}}_1  = (\mathbb{X}_1^{{{\rm T}}} \mathbb{X}_1)^{-1}   \mathbb{X}_1^{{{\rm T}}} \mathbb{Y} = \bbeta_1 +(\mathbb{X}_1^{{{\rm T}}} \mathbb{X}_1)^{-1}   \mathbb{X}_1^{{{\rm T}}}  \beps ,  \quad  \hat{\bbeta}^{{\rm oracle}}_2 = \mo.
\end{align}
In other words, the oracle estimator is the unpenalized estimator that minimizes $ \sum_{i=1}^n (Y_i - \bX_{i,S_0}^{{{\rm T}}} \bbeta_{S_0})^2$  over the true support set $S_0$.

Denote by $\hat{\beps}^{\,{\rm oracle}}=(\hat{\varepsilon}^{\,{\rm oracle}}_1, \ldots, \hat{\varepsilon}^{\,{\rm oracle}}_n)^{{{\rm T}}} = \bY -\mathbb{X}^{{{\rm T}} } \hat{\bbeta}^{{\rm oracle}}$ the residuals after the oracle fit. Then, we can construct the maximum spurious correlation as in (\ref{eq2.2}), except that $\{ \varepsilon_i \}_{i=1}^n$ is now replaced by $\{ \hat \varepsilon_i^{\,{\rm oracle}} \}_{i=1}^n$, i.e.,
\begin{align} \label{eq4.5}
  & \hat{R}_n^{{\rm oracle}}(1,p) \nn \\
  &  =  \max_{j\in [p]} \frac{   | \sn (\hat{\varepsilon}^{\,{\rm oracle}}_i -  \mathbf{e}_n^{{{\rm T}}} \, \hat{\beps}^{\, {\rm oracle}} ) (X_{ij} - \bar{X}_j) | }{\sqrt{  \sn (\hat{\varepsilon}_i^{\,{\rm oracle}} -  \mathbf{e}_n^{{{\rm T}}} \, \hat{\beps}^{\, {\rm oracle}} )^2 } \cdot \sqrt{ \sn (X_{ij}-\bar{X}_j)^2}} ,
\end{align}
where $\mathbf{e}_n=(1/n , \ldots , 1/n)^{{{\rm T}}} \in \bbr^n$ and $\bar{X}_j = n^{-1} \sn X_{ij}$.  We here deal with the specific case of a spurious correlation of size 1, as this is what is needed for testing the exogeneity assumption \eqref{eq1.2}.

To establish the limiting distribution of $\hat{R}_n^{{\rm oracle}}(1,p)$, we make the following assumptions.

\begin{assumption} \label{reg.cond}
$\mathbb{Y}=\mathbb{X} \bbeta^* + \beps$ with supp$\,(\bbeta^*)=\{1,\ldots, s\}$ and $\beps=(\varepsilon_1,\ldots, \varepsilon_n)^{{{\rm T}}}$ being i.i.d. centered sub-Gaussian satisfying that $K_0 = \| \varepsilon_i \|_{\psi_2}<\infty$. The rows of $\mathbb{X}=(\bX_1,\ldots, \bX_n)^{{{\rm T}}}$ are i.i.d. sub-Gaussian random vectors as in Condition~\ref{moment.cond}.
\end{assumption}

As before, we can assume that $\bSigma=\e (\bX_i \bX_i^{{{\rm T}}})$ is a correlation matrix with diag$\,(\bSigma)=\bI_p$.  Set $d=p-s$ and partition
\begin{align}
	\bSigma = \small\left(
\begin{array}{cc}
\bSigma_{11} & \bSigma_{12} \\
\bSigma_{21} & \bSigma_{22}
\end{array}
\right)  \quad \mbox{ with } \quad \bSigma_{11} \in \bbr^{s\times s}, \, \bSigma_{22} \in \bbr^{d\times d}, \,  \bSigma_{21} = \bSigma_{12}^{{{\rm T}}}.   \label{eq4.6}
\end{align}
Let $\bSigma_{22.1} = (\widetilde \sigma_{jk})_{1\leq j,k\leq d} = \bSigma_{22} - \bSigma_{21} \bSigma_{11}^{-1} \bSigma_{12}$ be the Schur complement of $\bSigma_{11}$ in $\bSigma$.

\begin{assumption} \label{cov.cond}
$\widetilde \sigma_{\min}  =\min_{1\leq j\leq d}  \widetilde \sigma_{jj}$ is bounded away from zero.
\end{assumption}

\begin{theorem} \label{thm4.1}
Assume that Conditions~\ref{reg.cond} and \ref{cov.cond} hold, and that the triplet $(s,p,n)$ satisfies $s\log p=o(\sqrt{n})$ and $\log p=o(n^{1/7})$. Then the maximum spurious correlation $\hat{R}^{{\rm oracle}}_n(1,p)$ in \eqref{eq4.5} satisfies that, as $n\rightarrow \infty$,
\begin{align} \label{eq4.7}
  \sup_{t\geq 0} \big|    \PP\big\{ \sqrt{n} \hat{R}_n^{{\rm oracle}}(1,p)   \leq t \big\}    -  \PP\big( |\widetilde{\bZ} |_{\infty} \leq t \big) \big| \rightarrow 0,
\end{align}
where $\widetilde{\bZ} \sta{d}{=}  N(\mo,\bSigma_{22.1} )$ is a $d$-variate centered Gaussian random vector with covariance matrix $\bSigma_{22.1}$.
\end{theorem}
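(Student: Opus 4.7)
The plan is to reduce $\sqrt{n}\,\hat R_n^{{\rm oracle}}(1,p)$ to the maximum over $j\in S_0^c$ of a normalized sub-exponential sum and then invoke the Gaussian approximation of \cite{CCK14a}, the same tool that powered Theorem~\ref{thm3.1}. For each $j\in S_0^c$, set $\bgamma_j^*=\bSigma_{11}^{-1}E[\bX_{S_0}X_j]$ and $\widetilde X_{ij}=X_{ij}-(\bgamma_j^*)^T\bX_{i,S_0}$, the population projection residual of $X_j$ onto $\bX_{S_0}$. The target reduction is
\[
   \sqrt n\,\hat R_n^{{\rm oracle}}(1,p) \;=\; \max_{j\in S_0^c}\Bigl|\,n^{-1/2}\sum_{i=1}^n \varepsilon_i\widetilde X_{ij}\,\Bigr| \;+\; o_P(1),
\]
since the summands $(\varepsilon_i\widetilde X_{ij})_{j\in S_0^c}$ are i.i.d.\ centered with cross-covariance $\widetilde\sigma_{jk}$, yielding the claimed limit $|\widetilde\bZ|_\infty$ in \eqref{eq4.7}.

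The algebraic reduction rests on the OLS orthogonality $\sum_{i=1}^n\hat\varepsilon_i^{\,{\rm oracle}}\bX_{i,S_0}=\mo$. Because $\sum_i(X_{ij}-\bar X_j)=0$ for every $j$, the decomposition $X_{ij}=(\bgamma_j^*)^T\bX_{i,S_0}+\widetilde X_{ij}$ rewrites the numerator of the $j$-th sample correlation (for $j\in S_0^c$) as
\[
  \sum_{i=1}^n \hat\varepsilon_i^{\,{\rm oracle}}(X_{ij}-\bar X_j) \;=\; \sum_{i=1}^n\hat\varepsilon_i^{\,{\rm oracle}}\widetilde X_{ij} \;-\; \bar X_j\sum_{i=1}^n \hat\varepsilon_i^{\,{\rm oracle}}.
\]
Substituting $\hat\varepsilon_i^{\,{\rm oracle}}=\varepsilon_i-\bX_{i,S_0}^T(\hat\bbeta_1^{{\rm oracle}}-\bbeta_1)$ turns the first piece into $\sum_i\varepsilon_i\widetilde X_{ij}-(\hat\bbeta_1^{{\rm oracle}}-\bbeta_1)^T\sum_i\bX_{i,S_0}\widetilde X_{ij}$. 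The indices $j\in S_0$ contribute negligibly: the normal equations force $\sum_i\hat\varepsilon_i^{\,{\rm oracle}}X_{ij}=0$, so the numerator there is simply $-\bar X_j\sum_i\hat\varepsilon_i^{\,{\rm oracle}}$, which will be shown to be $o_P(1)$ after the $\sqrt n$ scaling by the same bound used for the centering piece below.

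Two remainder terms must then be controlled uniformly in $j$. Under Condition~\ref{reg.cond}, the oracle estimator satisfies $|\hat\bbeta_1^{{\rm oracle}}-\bbeta_1|_2=O_P(\sqrt{s/n})$; moreover, since $E[\bX_{S_0}\widetilde X_j]=\mo$ by construction, a sub-exponential maximal inequality coupled with a union bound yields $\max_{j\in S_0^c}\bigl|n^{-1}\sum_i\bX_{i,S_0}\widetilde X_{ij}\bigr|_2=O_P(\sqrt{s\log p/n})$. Multiplying these bounds and rescaling by $\sqrt n$ leaves a uniform error of order $s\sqrt{\log p/n}$, which is $o_P(1)$ because $s\log p=o(\sqrt n)$ implies $s^2\log p=o(n)$. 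The centering contribution $n^{-1/2}\bar X_j\sum_i\hat\varepsilon_i^{\,{\rm oracle}}$ is handled by combining $\max_{j\in[p]}|\bar X_j|=O_P(\sqrt{\log p/n})$, $|\sum_i\varepsilon_i|=O_P(\sqrt n)$, and $|n\bar\bX_{S_0}^T(\hat\bbeta_1^{{\rm oracle}}-\bbeta_1)|=O_P(s)$, producing a uniform bound of order $\sqrt{\log p/n}+s\sqrt{\log p}/n=o(1)$. Finally, standard concentration gives $n^{-1}\sum_i(\hat\varepsilon_i^{\,{\rm oracle}}-\bar{\hat\varepsilon}^{\,{\rm oracle}})^2\stackrel{P}{\to}1$ and $\max_{j\in[p]}\bigl|n^{-1}\sum_i(X_{ij}-\bar X_j)^2-1\bigr|=o_P(1)$, so dividing by the empirical standard deviations does not disturb the approximation.

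With the linearization in place, I apply the high-dimensional CLT of \cite{CCK14a} to the $2d$-dimensional vector sum whose entries are $\pm\varepsilon_i\widetilde X_{ij}$, $j\in S_0^c$ (the two signs reproduce the absolute value in the max). Its summands are independent and sub-exponential, and $\Cov(\varepsilon\widetilde X_j,\varepsilon\widetilde X_k)=\widetilde\sigma_{jk}$, so the Gaussian limit has covariance $\bSigma_{22.1}$; Condition~\ref{cov.cond} provides the variance lower bound $\min_j\widetilde\sigma_{jj}>0$ that keeps the comparison non-degenerate, and the scaling $\log p=o(n^{1/7})$ matches the hypotheses of that approximation. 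The principal technical obstacle is the uniform (over $j\in S_0^c$) elimination of the $\hat\bbeta_1^{{\rm oracle}}-\bbeta_1$ remainder: it requires pairing the $\sqrt{s/n}$-rate for the oracle estimator with an $\ell_\infty$-type control of $n^{-1}\sum_i\bX_{i,S_0}\widetilde X_{ij}$ maximized over $j$ (then converted back to $\ell_2$ via the factor $\sqrt s$), rather than a crude $\ell_2$-times-$\ell_2$ bound which would cost an extra factor of $\sqrt p$ and wreck the rate condition $s\log p=o(\sqrt n)$.
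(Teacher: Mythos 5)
Your proposal is correct and follows essentially the same route the paper takes: linearize the oracle residual via the normal equations $\mathbb{X}_1^T\hat{\beps}^{\rm oracle}=\mo$, project each $X_j$ ($j\in S_0^c$) onto $\bX_{S_0}$ in population to produce $\widetilde X_{ij}$ (whose cross-covariance is exactly $\bSigma_{22.1}$), show the leading term is $n^{-1/2}\sum_i\varepsilon_i\widetilde X_{ij}$ with remainders $O_P(s\sqrt{\log p/n})$ and $O_P(\sqrt{\log p/n})$ that are $o_P(1/\sqrt{\log p})$ under $s\log p=o(\sqrt n)$ (as needed after the anti-concentration inflation), and finish with the high-dimensional Gaussian approximation of \cite{CCK14a} applied to the $2d$-dimensional sub-exponential array, using Condition~\ref{cov.cond} for non-degeneracy and $\log p=o(n^{1/7})$ for the approximation rate. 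Your identification that the cross-term must be controlled via an $\ell_\infty$-to-$\ell_2$ argument (a $\sqrt s$ conversion applied to a coordinatewise Bernstein bound) rather than a crude $\ell_2\times\ell_2$ bound is precisely the point that makes the rate condition $s\log p=o(\sqrt n)$ suffice.
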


As $p_{\lambda}$ is a folded-concave penalty function, \eqref{eq4.2} is a non-convex
optimization problem. The local linear approximation (LLA) algorithm can be applied to produce a certain local minimum for any fixed initial solution [\cite{ZL08}, \cite{FXZ14}]. In particular, \cite{FXZ14} prove that the LLA algorithm can deliver the oracle estimator in the folded concave penalized problem with overwhelming probability if it is initialized by some appropriate initial estimator.

Let $\hat{\bbeta}^{{\rm LLA}}$ be the estimator computed via the one-step LLA algorithm initiated by the LASSO estimator [\cite{TIB96}].  That is,
\be  \label{eq4.8}
 \hat{\bbeta}^{{\rm LLA}}  = \arg\min_{\bbeta}  \bigg\{ (2n)^{-1} \sn ( Y_i-\bX_i^{{{\rm T}}} \bbeta )^2 + \sum_{j=1}^p p_\lambda'(| \hat{\beta}_j^{\,{\rm LASSO}}|)
  |\beta_j | \bigg\} , 
\ee
where $p_\lambda$ is a folded concave penalty, such as the SCAD and MCP penalties, and $
 \hat{\bbeta}^{{\rm LASSO}}  = \arg\min_{\bbeta}  \{ (2n)^{-1}  \sn ( Y_i-\bX_i^{{{\rm T}}} \bbeta )^2 + \lambda_{{\rm }} |\bbeta |_1 \}$. Accordingly, denote by $\hat{R}^{{\rm LLA}}_n(1,p)$ the maximum spurious correlation as in \eqref{eq4.5} with $\hat{\varepsilon}^{\, {\rm oracle}}_i$ replaced by $\hat{\varepsilon}_i^{\,{\rm LLA}} = Y_i - \bX_i^{{{\rm T}}} \hat{\bbeta}^{{\rm LLA}}$. Applying Theorem~\ref{thm4.1}, we derive the limiting distribution of $\hat{R}^{{\rm LLA}}_n(1,p)$ under suitable conditions. First, let us recall the {\it Restricted Eigenvalue} concept formulated by \cite{BRT09}.

\begin{definition}  \label{RE.condition}
For any integer $s_0 \in [p]$ and positive number $c_0$, the RE$\,(s_0,c_0)$ parameter $\kappa(s_0,c_0,\mathbf{A})$ of a $p\times p$ matrix $\mathbf{A}$ is defined as
\be \label{eq4.9}
 \kappa(s_0,c_0,\mathbf{A}) := \min_{S \subseteq [p] : |S| \leq s_0} \; \; \;\min_{\bdelta \neq 0 : |\bdelta_{S^{{\rm c}}}|_1 \leq c_0 | \bdelta_S |_1} \frac{    \bdelta^{{{\rm T}}} \mathbf{A}  \bdelta }{ | \bdelta_S |_2^2 } .
\ee
\end{definition}

\begin{theorem}  \label{thm4.2}
Assume that Conditions~\ref{reg.cond} and \ref{cov.cond} hold, the minimal signal strength of $\bbeta^*$ satisfies $
\min_{j\in S_0 } |\beta_j | > (a+1) \lambda$ for $a,\lambda$ as in \eqref{eq4.2}, and that the triplet $(s,p,n)$ satisfies $s\log p=o(\sqrt{n})$, $\frac{s\log p}{\kappa(s,3+\epsilon, \bSigma)}=o(n)$ for some $\epsilon>0$ and $\log p=o(n^{1/7})$. If the regularization parameters $(\lambda, \lambda_{{\rm LASSO}})$ are such that $\lambda\geq \frac{8\sqrt{s}}{\kappa(s,3,\bSigma)}\lambda_{{\rm LASSO}}$ and $\lambda_{{\rm LASSO}} \geq C K_0 \sqrt{( \log p )/n}$ for $C>0$ large enough, then as $n\rightarrow \infty$,
\begin{align} \label{eq4.10}
  \sup_{t\geq 0} \big|    \PP\big\{ \sqrt{n} \hat{R}_n^{{\rm LLA}}(1,p)   \leq t \big\}    -  \PP \big( |\widetilde{\bZ} |_{\infty} \leq t \big) \big| \rightarrow 0,
\end{align}
where $\widetilde{\bZ} \sta{d}{=}  N(\mo,\bSigma_{22.1} )$.
\end{theorem}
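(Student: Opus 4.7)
The plan is to reduce Theorem~\ref{thm4.2} to Theorem~\ref{thm4.1} by showing that on an event $\cA_n$ with $P(\cA_n)\to 1$, the one-step LLA estimator $\hat{\bbeta}^{{\rm lla}}$ coincides with the oracle estimator $\hat{\bbeta}^{{\rm oracle}}$. On $\cA_n$ the residual vectors are identical, hence $\hat R_n^{{\rm lla}}(1,p) = \hat R_n^{{\rm oracle}}(1,p)$, and the conclusion \eqref{eq4.10} follows directly from \eqref{eq4.7}. So the whole proof is about producing such an event $\cA_n$ via a LASSO--then--KKT analysis, and then invoking Theorem~\ref{thm4.1} as a black box.

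First I would establish an $\ell_\infty$ bound on the LASSO initializer. On the event $\cE_1 = \{|n^{-1}\mathbb{X}^T\beps|_\infty \leq \lambda_{{\rm lasso}}/2\}$, which has probability tending to $1$ by the standard sub-Gaussian maximal inequality together with the choice $\lambda_{{\rm lasso}} \geq C K_0 \sqrt{(\log p)/n}$, the basic inequality forces the error $\hat{\bdelta}=\hat{\bbeta}^{{\rm lasso}}-\bbeta^*$ into the cone $\{|\bdelta_{S_0^{{\rm c}}}|_1 \leq 3|\bdelta_{S_0}|_1\}$. Under a sample restricted eigenvalue condition, which follows from the population one RE$(s,3+\epsilon,\bSigma)$ by concentration of the empirical Gram matrix under the assumption $s\log p/\kappa(s,3+\epsilon,\bSigma)=o(n)$, this yields on a further high-probability event $\cE_2$ the bound
\[
  |\hat{\bbeta}^{{\rm lasso}}-\bbeta^*|_\infty \;\leq\; |\hat{\bbeta}^{{\rm lasso}}-\bbeta^*|_2 \;\lesssim\; \frac{\sqrt{s}\,\lambda_{{\rm lasso}}}{\kappa(s,3,\bSigma)} \;\leq\; \frac{\lambda}{8},
\]
where the last step uses the calibration $\lambda \geq 8\sqrt{s}\lambda_{{\rm lasso}}/\kappa(s,3,\bSigma)$. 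Combined with the minimum signal condition $\min_{j\in S_0}|\beta_j^*|>(a+1)\lambda$, this ensures $|\hat\beta_j^{{\rm lasso}}|>a\lambda$ for every $j\in S_0$ and $|\hat\beta_j^{{\rm lasso}}|\leq\lambda/8<\lambda$ for every $j\in S_0^{{\rm c}}$. Because the SCAD derivative satisfies $p_\lambda'(t)=0$ for $t>a\lambda$ and $p_\lambda'(t)=\lambda$ for $t\leq \lambda$, the weighted LASSO program \eqref{eq4.8} reduces on $\cE_1\cap\cE_2$ to
\[
\hat{\bbeta}^{{\rm lla}} \;=\; \argmin_{\bbeta}\Big\{(2n)^{-1}|\mathbb{Y}-\mathbb{X}\bbeta|_2^2 + \lambda\,|\bbeta_{S_0^{{\rm c}}}|_1\Big\},
\]
i.e.\ unpenalized on $S_0$ and $\ell_1$-penalized on $S_0^{{\rm c}}$.

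Next I would verify that $(\hat{\bbeta}^{{\rm oracle}}_1,\mathbf 0)$ satisfies the KKT conditions for this reduced program. Stationarity on $S_0$ is automatic from \eqref{eq4.3}. For $j\in S_0^{{\rm c}}$ the subgradient condition requires
\[
\big|n^{-1}\mathbb{X}_{S_0^{{\rm c}}}^T\big(\mathbb{Y}-\mathbb{X}_1\hat{\bbeta}^{{\rm oracle}}_1\big)\big|_\infty \;=\;\big|n^{-1}\mathbb{X}_{S_0^{{\rm c}}}^T(\mI-P_{\mathbb{X}_1})\beps\big|_\infty \;\leq\;\lambda,
\]
where $P_{\mathbb{X}_1}=\mathbb{X}_1(\mathbb{X}_1^T\mathbb{X}_1)^{-1}\mathbb{X}_1^T$. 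By sub-Gaussian tail bounds for $\beps$ combined with a concentration bound on the empirical cross-Gram matrix (controlling $|n^{-1}\mathbb{X}_{S_0^{{\rm c}}}^T\mathbb{X}_1|$ via $s\log p=o(\sqrt n)$), the left-hand side is $O_P(\sqrt{(\log p)/n})$, which is dominated by $\lambda$ on an event $\cE_3$ with probability tending to $1$; uniqueness of the solution on $\cE_3$ follows from strict dual feasibility together with invertibility of $\mathbb{X}_1^T\mathbb{X}_1$. Setting $\cA_n=\cE_1\cap\cE_2\cap\cE_3$ yields $\hat{\bbeta}^{{\rm lla}}=\hat{\bbeta}^{{\rm oracle}}$ on $\cA_n$, hence $\hat R_n^{{\rm lla}}(1,p)=\hat R_n^{{\rm oracle}}(1,p)$ there, and Theorem~\ref{thm4.1} delivers \eqref{eq4.10} after absorbing the vanishing probability of $\cA_n^{{\rm c}}$ into the Kolmogorov distance.

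The main obstacle I anticipate is threading the quantitative constants so that the LASSO $\ell_\infty$ bound is strictly below $\lambda$ while simultaneously giving $|\hat\beta_j^{{\rm lasso}}|>a\lambda$ on $S_0$; these two requirements are what the calibration $\lambda\geq 8\sqrt{s}\lambda_{{\rm lasso}}/\kappa(s,3,\bSigma)$ and $\min_{j\in S_0}|\beta_j^*|>(a+1)\lambda$ are designed to enforce, and the verification will rely on the oracle-inequality/LLA-recovery lemmas of the type in \cite{FXZ14}. Everything else --- the sub-Gaussian tail control of $n^{-1}\mathbb{X}_{S_0^{{\rm c}}}^T(\mI-P_{\mathbb{X}_1})\beps$, the RE transfer from $\bSigma$ to $\hat\bSigma_n$, and the final passage through Theorem~\ref{thm4.1} --- is routine under the stated scaling $s\log p=o(\sqrt n)$ and $\log p=o(n^{1/7})$.
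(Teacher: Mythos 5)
Your reduction is exactly the paper's intended route: the main text explicitly motivates Theorem~\ref{thm4.2} via the result of \cite{FXZ14} that the one-step LLA initialized at the LASSO coincides with the oracle estimator on a high-probability event, after which Theorem~\ref{thm4.1} is applied and the failure probability is absorbed into the Kolmogorov distance. Your re-derivation of the FXZ14 mechanism (LASSO $\ell_\infty$ bound to lock the SCAD weights to $0$ on $S_0$ and $\lambda$ on $S_0^{\mathrm c}$, then KKT/strict dual feasibility for the oracle solution) is the correct way to fill in that cited step, and it is consistent with the calibration $\lambda \ge 8\sqrt{s}\lambda_{\mathrm{lasso}}/\kappa(s,3,\bSigma)$, the signal condition $\min_{j\in S_0}|\beta_j^*|>(a+1)\lambda$, and the RE transfer enabled by $s\log p/\kappa(s,3+\epsilon,\bSigma)=o(n)$.
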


\section{Applications to high-dimensional inferences}
\label{sec5}

This section outlines three applications in high-dimensional statistics. The first determines whether discoveries by machine learning and data mining techniques are any better than those reached by chance. Second, we show that the distributions of maximum spurious correlations can also be applied to model selection. In the third application, we validate the fundamental assumption of exogeneity (\ref{eq1.2}) in high dimensions.

\subsection{Spurious discoveries} \label{sec5.1}

Let $q_\alpha^{\scMB}(s, p)$ be the upper $\alpha$-quantile of the random variable $R_n^{\scMB}(s,p)$ defined by \eqref{eq3.12}.  Then,
an approximate $1-\alpha$ upper confidence limit of the spurious correlation is given by $q_\alpha^{\scMB}(s, p)$. In view of Theorems~\ref{thm3.1} and \ref{thm3.2}, we claim that
\begin{equation} \label{eq5.1}
  \PP\big\{ \hat{R}_n(s,p) \leq q_\alpha^{\scMB}(s, p) \big\} \to 1 - \alpha.
\end{equation}
To see this, recall that $R_n^{\scMB} = \sqrt{n} R_n^{\sMB} /  | \bxi |_2$ for $\bxi=(\xi_1,\ldots, \xi_n)^{{{\rm T}}}$ as in \eqref{eq3.12}, and given $\{\bX_i\}_{i=1}^n$, $R_n^{\sMB}$ is the supremum of a Gaussian process. Let $F_n^{\sMB}(t) = \PP\{ R_n^{\sMB}(s,p) \leq t \, | \bX_1,\ldots, \bX_n \} $ be the (conditional) distribution function of $R_n^{\sMB}$ and define $t_0 = \inf\{ t: F_n^{\sMB}(t) > 0 \}$. By Theorem~11.1 of \cite{DLS98}, $F_n^{\sMB}$ is absolutely continuous with respect to the Lebesgue measure and is strictly increasing on $(t_0 , \infty)$, indicating that $\PP\{R_n^{\scMB}  \leq q_\alpha^{\scMB}(s, p) | \bX_1, \ldots, \bX_n \} = \alpha$ almost surely. This, together with \eqref{eq3.3} and \eqref{eq3.11}, proves \eqref{eq5.1} under Conditions~\ref{moment.cond}, \ref{sampling.condition}, and when $s\log(\gamma_s pn) = o(n^{1/7})$.

Let $\hat{Y}_i$ be fitted values using $s$ predictors indexed by $\hat{S}$ selected by a data-driven technique and $Y_i$ be the associated response value. They are denoted in the vector form by
$\hat{\mathbb{Y}}$ and $\mathbb{Y}$, respectively. If
\begin{equation}  \label{eq5.2}
   \big| \widehat{\mbox{corr}}_n(\mathbb{Y}, \hat{\mathbb{Y}}) \big| \leq q_\alpha^{\scMB}(s, p),
\end{equation}
then the discovery of variables $\hat{S}$ can be regarded as spurious; that is, no better than by chance.  Therefore, the multiplier bootstrap quantile $q_\alpha^{\scMB}(s, p)$ provides an important critical value and yardstick for judging whether the discovery is spurious, or whether the selected set $\hat{S}$ includes too many spurious variables. This yardstick is independent of the method used in the fitting.

\begin{remark}
\label{rmk5.1}
{\rm
The problem of judging whether the discovery is spurious is intrinsically different from that of testing the global null hypothesis $H_0: \bbeta^* =\mo$, which itself is an important problem in high-dimensional statistical inference and has been well-studied in the literature since the seminal work of \cite{Gvv06}. For example, the global null hypothesis $H_0:  \bbeta^* = \mo$ can be rejected by a test; still, the correlation between $Y$ and the variables $\hat{S}$ selected by a statistical method can be smaller than the maximum spurious correlation, and we should interpret the findings of $\hat{S}$ with caution. We need either more samples or more powerful variable selection methods.  This motivates us to derive the distribution of the maximum spurious correlation $\hat R_n(s,p)$. This distribution serves as an important benchmark for judging whether the discovery (of $s$ features from $p$ explanatory variables based on a sample of size $n$) is spurious. The magnitude of $\hat R_n(s,p)$ gives statisticians an idea of how big a spurious correlation can be, and therefore an idea of how much the covariates really contribute to the regression for a given sample size.
}
\end{remark}

\subsection{Model selection}
\label{secMS}

{In the previous section, we consider the reference distribution of the maximum spurious correlation statistic $\hat{R}_n(s,p)$ as a benchmark for judging whether the discovery of $s$ significant variables (among all of the $p$ variables using a random sample of size $n$) is impressive, regardless of which variable selection tool is applied. In this section, we show how the distribution of $\hat{R}_n(s,p)$ can be used to select a model. Intuitively, we would like to select a model that fits better than the spurious fit.  This limits the candidate sets of models and provides an upper bound on the model size.  In our experience, this upper bound itself provides a model selector.}

{
We now use LASSO as an illustration of the above idea.
Owing to spurious correlation, almost all of the variable selection procedures will, with high probability, select a number of spurious variables in the model so that the selected model is over-fitted. For example,
the LASSO method with the regularization parameter selected by cross-validation typically selects a far larger model size, as the bias caused by the $\ell_1$ penalty forces the cross-validation procedure to choose a smaller value of $\lambda$. Thus, it is important to stop the LASSO path earlier and the quantiles of $\hat{R}_n(s,p)$ provide useful guards.}

{Specifically, consider the LASSO estimator $\hat{\bbeta}_\lambda$ for the sparse linear model \eqref{eq4.1} with $\hat{s}_\lambda= | \mathrm{supp}(\hat{\bbeta}_\lambda ) |$, where $\lambda>0$ is the regularization parameter. We consider the LASSO solution path with the largest knot $\lambda_{\mathrm{ini}} := | \mathbb{X}^{{\rm T}} \mathbb{Y} |_{\infty}$ and the smallest knot $\lambda_{\mathrm{cv}}$ selected by ten-fold cross-validation. To avoid over-fitting, we propose using $q_\alpha^{\scMB}$ as a guide to choose the regularization parameter that guards us from selecting too many spurious variables. For each $\lambda$ in the  path, we compute $\hat{\mathrm{corr}}_n(\hat{\mathbb{Y}}_\lambda, \mathbb{Y})$, the sample correlation between the post-LASSO fitted and observed responses, and $q_\alpha^{\scMB}(\hat{s}_\lambda, p)$. 
Let $\hat{\lambda}_{\alpha}$ be the largest $\lambda$ such that the sign of $\hat{\mathrm{corr}}_n(\hat{\mathbb{Y}}_\lambda, \mathbb{Y}) - q_\alpha^{\scMB}(\hat{s}_\lambda, p)$ is nonnegative and then flips in the subsequent knot.
The selected model is given by $\hat{S}_{\alpha} = \mathrm{supp}(\hat{\bbeta}_{ \hat \lambda_\alpha })$. As demonstrated by the simulation studies in Section~\ref{sec6.2+}, this procedure selects a much smaller model size that is closer to the real data.}

\subsection{Validating exogeneity}
\label{sec5.2}

\cite{FLI14} show that the exogenous condition \eqref{eq1.2} is necessary for penalized least-squares to achieve a model selection consistency.  They question the validity of such an exogeneous assumption, as it imposes too many equations.  They argue further that even when the exogenous model holds for important variables $\bX_S$, i.e.,
\begin{equation} \label{eq5.3}
    Y = \bX_S^{{{\rm T}}} \bbeta_{S}^* + \varepsilon, \qquad \e ( \varepsilon \bX_S ) = \mo,
\end{equation}
the extra variables $\bX_N$ (with $N = S^{{\rm c}}$) are collected in an effort to cover the unknown set $S$ --- but no verification of the conditions
\begin{equation} \label{eq5.4}
    \e (  \varepsilon \bX_N ) = \e \{ (  Y - \bX_S^{{{\rm T}}} \bbeta_{S}^*) \bX_N \} =\mo
\end{equation}
has ever been made.  The equality $\e \{ (  Y - \bX_S^{{{\rm T}}} \bbeta_{S}^*) X_j \} = 0$ in \eqref{eq5.4} holds by luck for some covariate $X_j$, but it can not be expected that this holds for all $j \in N$. They propose a focussed generalized method of moment (FGMM) to avoid the unreasonable assumption (\ref{eq5.4}).  Recognizing \eqref{eq5.3} is not identifiable in high-dimensional linear models, they impose additional conditions such as $\e ( \varepsilon \bX_S^2 ) = \mo$.

Despite its fundamental importance to high-dimensional statistics, there are no available tools for validating (\ref{eq1.2}).  Regarding (\ref{eq1.2}) as a null hypothesis, an asymptotically $\alpha$-level test can be used to reject assumption (\ref{eq1.2}) when
\begin{equation} \label{eq5.5}
  \hat{T}_{n,p} = \max_{j \in [p]} \big| \sqrt{n} \, \wh{\mbox{corr}}_n( X_j, \varepsilon) \big| \geq  q_\alpha^{\scMB}(1, p).
\end{equation}
By Theorems~\ref{thm3.1} and \ref{thm3.2}, the test statistic has an approximate size $\alpha$.  The $p$-value of the test can be computed via the distribution of the Gaussian multiplier process $R_n^{{\rm CMB}}(1,p)$.

As pointed out in the introduction, when the components of $\bX$ are weakly correlated, the distribution of the maximum spurious correlation does not depend very sensitively on $\bSigma$. See also Lemma~6 in \cite{CLX14}. In this case, we can approximate it by the identity matrix, and hence one can compare the renormalized test statistic
\begin{equation} \label{eq5.6}
    J_{n,p} = \hat{T}_{n,p}^2 - 2 \log p + \log(\log p)
\end{equation}
with the limiting distribution in (\ref{eq3.6}).  The critical value for test statistic $J_{n,p}$ is
\begin{equation} \label{eq5.7}
   J_\alpha = -2 \log\{ -\sqrt{\pi} \log(1-\alpha)\} ,
\end{equation}
and the associated $p$-value is given by
\begin{equation} \label{eq5.8}
   \exp ( - \pi^{-1/2}  e^{-J_{n,p}/2} ).
\end{equation}
Expressions (\ref{eq5.7}) and (\ref{eq5.8}) provide analytic forms for a quick validation of the exogenous assumption (\ref{eq1.2}) under weak dependence. In general, we recommend using the wild bootstrap, which takes into account the correlation effect and provides more accurate estimates especially when the dependence is
strong. See \cite{CZZZ2017} for more empirical evidences.

In practice, $\varepsilon$ is typically unknown to us. Therefore, $\hat{T}_{n,p}$ in \eqref{eq5.5} is calculated using the fitted residuals $\{\hat{\varepsilon}_i^{\,{\rm LLA}}\}_{i=1}^n$.  In view of Theorem~\ref{thm4.2}, we need to adjust the null distribution according to \eqref{eq4.10}. By Theorem~\ref{thm3.2}, we adjust the definition of the process $\bZ_n$ in \eqref{eq3.9} by
\be \label{eq5.9}
\bZ_n^{{\rm LLA}} = n^{-1/2} \sn \xi_i ( \bX_i^{{\rm LLA}}  - \overline{\bX}^{{\rm LLA}}_n )   \in \bbr^{ p- |\hat{S}|} ,
\ee
where $\bX_i^{{\rm LLA}} = \bX_{i,\hat{N}} - \hat{\bSigma}_{\hat{N} \hat{S}} \hat{\bSigma}_{\hat{S} \hat{S}}^{-1}  \bX_{i,\hat{S}}$ is the residuals of $\bX_{\hat N}$ regressed on $\bX_{\hat S}$, where $\hat{S}$ is the set of selected variables, $\hat{N} =[p] \setminus \hat{S}$, and $\hat{\bSigma}_{S S'}$ denotes the sub-matrix of $\hat{\bSigma}_n$ containing entries indexed by $(k,\ell) \in S \times  S'$. From \eqref{eq5.9}, the multiplier bootstrap approximation of $|\widetilde{\bZ}|_\infty$ is $R_n^{\sMB,{\rm LLA}}(1,p)=|\hat{\bD}^{-1/2}\bZ_n^{{\rm LLA}}  |_\infty$, where $\hat{\bD}=$ diagonal matrix of the sample covariance matrix of $\{ \bX_i^{{\rm LLA}}\}_{i=1}^n$. Consequently, we reject \eqref{eq1.2} if $\hat{T}_{n,p} > q^{\sMB, {\rm LLA}}_\alpha(1,p) $, where $q^{\sMB, {\rm LLA}}_\alpha(1,p)$ is the (conditional) upper $\alpha$-quantile of $R_n^{\sMB , {\rm LLA}}(1,p)$ given $\{\bX_i\}_{i=1}^n$.

\begin{remark}
{{\rm To the best of our knowledge, this is the first paper to consider testing the exogenous assumption \eqref{eq1.2}, for which we use the maximum correlation between covariates and fitted residuals as the test statistic. A referee kindly informed us in his/her review report that in the context of specification testing, \cite{CCK13} propose a similar extreme value statistic and use the multiplier bootstrap to compute a critical value for the test. To construct marginal test statistics, they use self-normalized covariances between generated regressors and fitted residuals obtained via ordinary least squares, whereas we use sample correlations between the covariates and fitted residuals obtained by the LLA algorithm. We refer readers to Appendix~M in the supplementary material of \cite{CCK13} for more details.}}
\end{remark}

\section{Numerical studies} \label{sec6}

In this section, Monte Carlo simulations are used to examine the finite-sample performance of the bootstrap approximation (for a given data set) of the distribution of the maximum spurious correlation (MSC).

\subsection{Computation of spurious correlation}

First, we observe that $\hat{R}_n(s,p)$ in \eqref{eq2.2} can be written as
$ \hat{R}^2_n(s,p) =  \hat{\sigma}^{-2}_{\varepsilon}  \max_{S\subseteq [p]: |S|=s}  \mathbf{v}_{n,S}^{{{\rm T}}} \hat \bSigma^{-1}_{S S} \mathbf{v}_{n,S}$, where $\hat{\sigma}^2_\varepsilon = n^{-1}\sn (\varepsilon_i -\bar{\varepsilon}_n)^2$ and $\mathbf{v}_n =n^{-1} \sn (\varepsilon_i -\bar{\varepsilon}_n)(\bX_i - \bar{\bX}_n)$. Therefore, the computation of $\hat{R}_{n}(s,p)$ requires solving the combinatorial optimization problem
\be
	\hat{S} 
    = \arg\max_{S\subseteq [p] : |S|=s}  \mathbf{v}_{n,S}^{{{\rm T}}} \hat \bSigma^{-1}_{S S}  \mathbf{v}_{n,S} .	 \label{eq6.1}
\ee

It is computationally intensive to obtain $\hat{S}$ for large values of $p$ and $s$ as one essentially needs to enumerate all ${p \choose s}$ possible subsets of size $s$ from $p$ covariates. A fast and easily implementable approach is to use the stepwise addition (forward selection) algorithm as in \cite{FGH12}, which results in some value that is no larger than $\hat{R}_n(s,p)$ but avoids computing all ${p\choose s}$ multiple correlations in \eqref{eq6.1}. Note that the optimization \eqref{eq6.1} is equivalent to finding the best subset regression of size $s$.
When $p$ is relatively small, say if $p$ ranges from $20$ to $40$, the branch-and-bound procedure is commonly used for finding the best subset of a given size that maximizes multiple $R^2$ [\cite{BS05}]. However, this approach becomes computational infeasible very quickly when there are hundreds or thousands of potential predictors. As a trade-off between approximation accuracy and computational intensity,  we propose using a two-step procedure that combines the stepwise addition and branch-and-bound algorithms. First, we use the forward selection to pick the best $d$ variables, say $d=40$, which serves as a pre-screening step. Second, across the ${d\choose s}$ subsets of size $s$, the branch-and-bound procedure is implemented to select the best subset that maximizes the multiple-$R^2$. This subset is used as an approximate solution to \eqref{eq6.1}.   Note that when $s > 40$, which is rare in many applications, we only use the stepwise addition to reduce the computational cost.

\subsection{Accuracy of the multiplier bootstrap approximation}\label{sec6.1}

For the first simulation, we consider the case where the random noise $\varepsilon$ follows the uniform distribution standardized so that $\e (\varepsilon)=0$ and $\e ( \varepsilon^2)=1$. Independent of $\varepsilon$, the $p$-variate vector $\bX$ of covariates has i.i.d. $N(0,1)$ components. In the results reported in Table~\ref{tab1}, the ambient dimension $p=2000$, the sample size $n$ takes a value in $\{400, 800, 1200\}$, and $s$ takes a value in $\{1, 2, 5, 10\}$. For a given significance level $\alpha\in (0,1)$, let $q_\alpha(s,p)$ be the upper $\alpha$-quantile of $\hat{R}_n(s,p)$ in \eqref{eq2.1}. For each data set $\mathcal{X}_n = \{\bX_1, \ldots, \bX_n\}$, a direct application of Theorems~\ref{thm3.1} and \ref{thm3.2} is that
$$
c^{{\rm MB}}(\mathcal{X}_n,\alpha) :=	\PP \big\{ R_n^{{\rm CMB}}(s,p) \geq  q_\alpha(s,p) | \mathcal{X}_n \big\} \rightarrow \alpha \ \ \mbox{ as } n\to \infty.
$$
The difference $c^{{\rm MB}}(\mathcal{X}_n,\alpha)- \alpha$, however, characterizes the extent of the size distortions and the finite-sample accuracy of the multiplier bootstrap approximation (MBA). Table~\ref{tab1} summarizes the mean and the standard deviation (SD) of $c^{{\rm MB }}(\mathcal{X}_n,\alpha) $ based on 200 simulated data sets with $\alpha \in \{0.05,0.1\}$. The $\alpha$-quantile $q_\alpha(s,p)$ is calculated from 1600 replications, and $c^{{\rm MB }}(\mathcal{X}_n,\alpha)$ for each data set is simulated based on 1600 bootstrap replications. In addition, we report in Figure~\ref{Fig3} the distributions of the maximum spurious correlations and their multiplier bootstrap approximations conditional on a given data set $\mathcal{X}_n$ when $p\in \{2000, 5000\}$, $s\in \{1,2,5, 10\}$ and $n=400$. Together, Table~\ref{tab1} and Figure~\ref{Fig3} show that the multiplier bootstrap method indeed provides a quite good approximation to the (unknown) distribution of the maximum spurious correlation.
\begin{table}[h]
\centering
\caption{\label{size} (Isotropic case) The mean of 200 empirical sizes $c^{{\rm MB}}(\cdot, \alpha) \times 100$, with its estimate of SD in the parenthesis, when $p=2000$, $s= 1, 2, 5, 10$, $n = 400, 800, 1200$, and $\alpha = 0.1, 0.05$}
\label{tab1}
\scriptsize{\begin{tabular}{ccccccccc}
&\multicolumn{2}{c}{$s=1$}&\multicolumn{2}{c}{$s=2$}&\multicolumn{2}{c}{$s=5$}&\multicolumn{2}{c}{$s=10$}\\
\cline{2-3}\cline{4-5}\cline{6-7}\cline{8-9}\vspace{-0.3cm}\\
$n$  & $\alpha=0.1$ &   $\alpha=0.05$ &  $\alpha=0.1$ & $\alpha=0.05$ &  $\alpha=0.1$ & $\alpha=0.05$ &  $\alpha=0.1$ & $\alpha=0.05$
\\ \midrule
$400 $&$9.54$ &$4.68$&$9.13$ &$4.38$&$9.08$ &$3.78$&$8.67$ &$4.44$\\
&(0.643)&(0.294)&(0.568)&(0.284)&(0.480)&(0.245)&(0.506)&(0.291) \\
\midrule
$800 $&$9.43$ &$4.93$&$9.47$ &$4.42$&$9.73$ &$4.73$&$9.94$ &$5.62$\\
&(0.444)&(0.296)&(0.474)&(0.296)&(0.488)&(0.294)&(0.557)&(0.331) \\
\midrule
$1200 $&$9.09$ &$4.32$&$9.00$ &$4.46$&$9.42$ &$4.87$&$9.97$ &$5.15$\\
&(0.507)&(0.261)&(0.542)&(0.278)&(0.543)&(0.322)&(0.579)&(0.318) \\
\bottomrule
\end{tabular}}
\end{table}\par

\begin{figure}[hbtp!]
  \centering
  \includegraphics[width=5in, trim=1.7cm 0.75cm 1.7cm 0.75cm, clip]{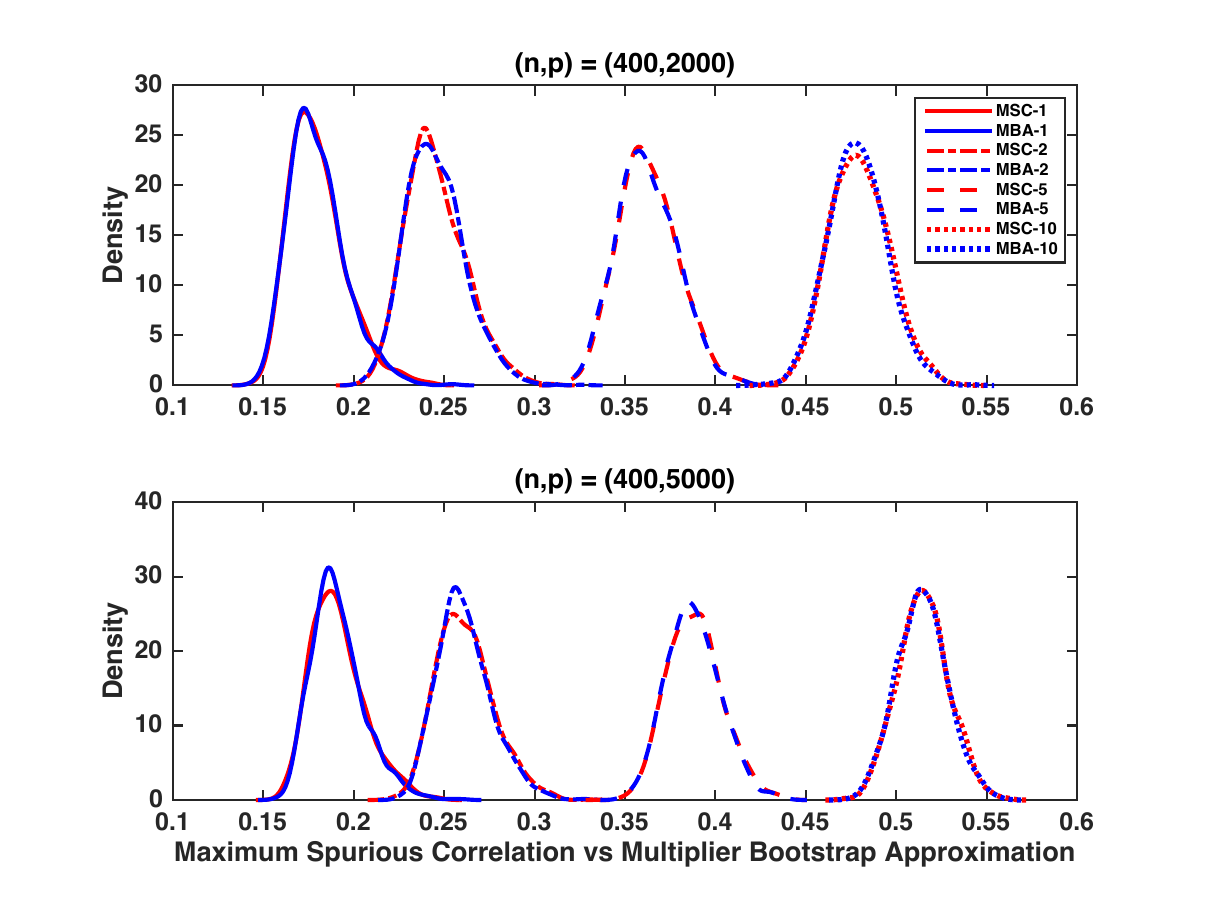}
  \caption{Distributions of maximum spurious correlations (blue) and multiplier bootstrap approximations (for a given data set; red) based on 1600 simulations with combinations of $p = 2000, 5000$, $s =1 , 2, 5, 10$, and $n=400$ when $\bSigma$ is an identity matrix. \label{Fig3} }
\end{figure}

For the second simulation, we focus on an anisotropic case where the covariance matrix $\bSigma$ of $\bX$ is non-identity, and the condition number of $\bSigma$ is well-controlled. Specifically, we assume that $\varepsilon$ follows the centered Laplace distribution rescaled so that $\e (\varepsilon)=0$ and $\e ( \varepsilon^2)=1$. To introduce dependence among covariates, first we denote with $\mathbf{A}$ a $10\times 10$ symmetric positive definite matrix with a pre-specified condition number $c>1$ and let $\rho\in (0,1)$. Then the $p$-dimensional vector $\bX$ of the covariates is generated according to $\bX = \mathbf{G}_1(\rho) \, \mathbf{Z}_1 + \mathbf{G}_2(\rho) \, \mathbf{Z}_2$,
where $\mathbf{Z}_1 \sta{d}{=} N(\mo ,\mathbf{A})$, $\mathbf{Z}_2 \sta{d}{=} N(\mo, \mathbf{I}_{p-10})$, and $\mathbf{G}_1(\rho)  \in \bbr^{p\times 10}$, $\mathbf{G}_2(\rho) \in \bbr^{p\times (p-10)}  $ are given respectively by $\mathbf{G}_1(\rho)^{{{\rm T}}} = (\mathbf{I}_{10}, \frac{\rho}{\sqrt{1+\rho^2}} \mathbf{I}_{10},  \mathbf{G}_{11}(\rho)^{{{\rm T}}})$ with
$$	
  \mathbf{G}_{11}(\rho)  = \frac{1-\rho}{\sqrt{1+(1-\rho)^2}} {\small\left(
\begin{array}{cccc}
1 &  0 & \ldots  & 0 \\
 1 & 0  & \ldots & 0 \\
 \vdots &  \vdots & \cdots & \vdots \\
 1 & 0 & \ldots & 0
\end{array}
\right) \in \bbr^{(p-20)\times 10} }
$$
and
\begin{align*}
\mathbf{G}_2(\rho) =  {\small\left(
\begin{array}{cc}
  \mathbf{0}_{10 \times 10} & \mathbf{0}_{10 \times (p-20)}   \\
 \frac{1}{\sqrt{1+\rho^2}} \mathbf{I}_{10}    &   \mathbf{0}_{10\times (p-20)} \\
    \mathbf{0}_{(p-20)\times 10}  &  \frac{1}{\sqrt{1+(1-\rho)^2}} \mathbf{I}_{p-20}
\end{array}
\right) } .
\end{align*}
In particular, we take $c=5$ and $\rho=0.8$ in the simulations reported in Table~\ref{tab2}, which summarizes the mean and the standard deviation (SD) of the size $c^{{\rm MB}}(\mathcal{X}_n,\alpha)$ based on 200 simulated data sets with $\alpha \in \{0.05,0.1\}$. Comparing the simulation results shown in Tables ~\ref{tab1} and ~\ref{tab2}, we find that the bootstrap approximation is fairly robust against heterogeneity in the covariance structure of the covariates.

\begin{table}[h]
\centering
\caption{\label{size} (Anisotropic case) Mean of 200 empirical sizes $c^{{\rm MB}}(\cdot, \alpha)\times 100$, with its estimate of SD in the parenthesis, when $p=2000$, $s= 1, 2, 5, 10$, $n = 400, 800, 1200$, and $\alpha = 0.1, 0.05$}
\label{tab2}
\scriptsize{\begin{tabular}{ccccccccc}
&\multicolumn{2}{c}{$s=1$}&\multicolumn{2}{c}{$s=2$}&\multicolumn{2}{c}{$s=5$}&\multicolumn{2}{c}{$s=10$}\\
\cline{2-3}\cline{4-5}\cline{6-7}\cline{8-9}\vspace{-0.3cm}\\
$n$  & $\alpha=0.1$ &  $\alpha=0.05$ &  $\alpha=0.1$ & $\alpha=0.05$ &  $\alpha=0.1$ & $\alpha=0.05$ &  $\alpha=0.1$ & $\alpha=0.05$
\\ \midrule
$400 $&$9.83$ &$4.39$&$9.04$ &$4.75$&$9.27$ &$4.65$&$9.34$ &$4.53$\\
&(0.426)&(0.222)&(0.402)&(0.208)&(0.492)&(0.273)&(0.557)&(0.291) \\
\midrule
$800 $&$10.18$ &$5.19$&$10.48$ &$5.12$&$9.98$ &$4.86$&$9.21$ &$4.73$\\
&(0.556)&(0.296)&(0.519)&(0.272)&(0.576)&(0.220)&(0.474)&(0.269) \\
\midrule
$1200 $&$9.42$ &$4.41$&$9.60$ &$5.71$&$9.90$ &$4.85$&$10.11$ &$5.19$\\
&(0.500)&(0.233)&(0.543)&(0.339)&(0.553)&(0.333)&(0.606)&(0.337) \\
\bottomrule
\end{tabular}}
\end{table}\par

\subsection{Detecting spurious discoveries} \label{sec6.2}

To examine how the multiplier bootstrap quantile $q^{\scMB}_\alpha(s,p)$ (see Section~\ref{sec5.1}) serves as a benchmark for judging whether the discovery is spurious, we compute the Spurious Discovery Probability (SDP) by simulating $200$ data sets from \eqref{eq4.1} with $n=100, 120, 160$, $p=400$, $\bbeta^* = (1, 0, -0.8, 0, 0.6, 0, -0.4, 0, \ldots, 0)^{{{\rm T}}} \in \bbr^{p}$, and standard Gaussian noise $\varepsilon \sta{d}{=} N(0,1)$. For some integer $s\leq r\leq p$, we let $\bx \sta{d}{=} N(\mo, \bI_r) $ be an $r$-dimensional Gaussian random vector. Let $\bGamma_r$ be a $p\times r$ matrix satisfying $\bGamma_r^{{{\rm T}}} \bGamma_r=\bI_r$. The rows of the design matrix $\mathbb{X}$ are sampled as i.i.d. copies from $\bGamma_r \bx \in \bbr^p$, where $r$ takes a value in $\{120, 160, 200, 240, 280, 320, 360 \}$.  To save space, we give the numerical results for the case of non-Gaussian design and noise in the supplementary material.

Put $\mathbb{Y}=(Y_1,\ldots, Y_n)^{{{\rm T}}}$ and let $\hat{\mathbb{Y}} = \mathbb{X}_{\hat{S}} \hat{\bbeta}^{{\rm pLASSO}}$ be the $n$-dimensional vector of fitted values, where $\hat{\bbeta}^{{\rm pLASSO}}=(\mathbb{X}_{\hat{S}}^{{{\rm T}}}  \mathbb{X}_{\hat{S}})^{-1} \mathbb{X}_{\hat{S}}^{{{\rm T}}}  \mathbb{Y}$ is the post-LASSO estimator using covariates selected by the ten-fold cross-validated LASSO estimator. Let $\hat{s}=|\hat{S}|_0$ be the number of variables selected. For $\alpha \in (0,1)$, the level-$\alpha$ SDP is defined as $\PP \{ |\hat{{\rm corr}}_n(\mathbb{Y}, \hat{ \mathbb{Y} })| \leq q_{\alpha}^{\scMB}(\hat{s},p)  \} $.  As the simulated model is not null, this SDP is indeed a type II error.  Given $\alpha=5 \%$ and for each simulated data set, $q_{\alpha}^{\scMB}(s,p)$ is computed based on 1000 bootstrap replications. Then we compute the empirical SDP based on 200 simulations. The results are given in Table~\ref{tab3}.

In this study, the design matrix is chosen so that there is a low-dimensional linear dependency in the high-dimensional covariates. The collected covariates are highly correlated when $r$ is much smaller than $p$. It is known that collinearity and high dimensionality add difficulty to the problem of variable selection and deteriorate the performance of the LASSO. The smaller the $r$ is, the more severe the problem of collinearity becomes. As reflected in Table~\ref{tab3}, the empirical SDP increases as $r$ decreases, indicating that the correlation between fitted and observed responses is more likely to be smaller than the spurious correlation.

\begin{table}[h]
\centering
\caption{\label{size} Empirical $\alpha$-level spurious discovery probability (ESDP) based on 200 simulations when $p=400$, $n=100, 120, 160$, and $\alpha=5\%$.}
\label{tab3}
\scriptsize{\begin{tabular}{cccccccc}
   & $r=120$ &  $r=160$ &  $r=200$ & $r=240$ &  $r=280$ & $r=320$ & $r=360$   \\ \midrule
  $ n = 100 $ & $ 0.6950  $ & $ 0.6650 $ &$ 0.6000 $&$ 0.5200 $ &$ 0.5100 $ & $0.4500$ & $0.4000$    \\
\midrule
$ n=120 $ & $ 0.6600  $ & $ 0.5350 $ &$ 0.3350 $&$ 0.3800 $ &$ 0.2500 $ & $0.2850$ & $0.1950$    \\
\midrule
$ n=160 $& $ 0.1950   $ & $0.1300$ & $0.0500$ & $0.0400$ & $0.0550$ & $0.0700$ & $0.0250$   \\
\bottomrule
\end{tabular}}
\end{table}\par

\subsection{Model selection} \label{sec6.2+}

We demonstrate the idea in Section~\ref{secMS} through the following toy example. Consider the linear model \eqref{eq4.1} with $(n,p)=(160, 400)$ and $\bbeta^* = (1, 0, -0.8, 0, 0.6, 0, -0.4, 0, \ldots, 0)^{{{\rm T}}} \in \bbr^p$. The covariate vector is taken to be $\bX = \bGamma  \bx $ with $\bx = (x_1, \ldots, x_{200})^{{\rm T}}$, where $x_1,\ldots, x_{200}$ are i.i.d. random variables following the continuous uniform distribution on $[-1,1]$ and $\bGamma$ is a $400 \times 200$ matrix satisfying $\bGamma^{{{\rm T}}} \bGamma=\bI_{200}$. The noise variable $\varepsilon$ follows a standardized $t$-distribution with 4 degrees of freedom. Moreover, let $S_0 = \{ j: \beta^*_j \neq 0 \}$ be the true model.

Applying ten-fold cross-validated LASSO selects $35$ variables. Along the solution path, we compute the number of correctly selected variables $| \hat{S} \cap S_0|$, the fitted correlation, and the upper $5\%$-quantile of the multiplier bootstrap approximation of the maximum spurious correlation based on 1000 bootstrap samples. The results are provided in Table~\ref{tab3'}, from which we see that the cross-validation procedure under the guidance of MSC selects 15 variables including all of the signal covariates.

\begin{table}[h]
\centering
\caption{\label{size} Number of true positive results, the sample correlation between fitted and observed responses, and the upper $5\%$-quantile of the multiplier bootstrap approximation based on $1000$ bootstrap samples.}
\label{tab3'}
\footnotesize{\begin{tabular}{cccc}
   & $| \hat{S}_\lambda \cap S_0 |$  &     $\hat{\mbox{corr}}_n({\mathbb{Y}}, \hat{\mathbb{Y}}^{{\rm pLASSO}}_{\lambda}) $ &  $q^{\scMB}_{0.05}(\hat{s}_\lambda, p)  $   \\ \midrule
$\lambda = 0.3410 \atop (\hat{s}_\lambda = 1 )$  & 1 & 0.3314 & 0.3040
  \\ \midrule
$\lambda = 0.2703 \atop (\hat{s}_\lambda = 2 )$  & 2  & 0.4802 & 0.3870
   \\ \midrule
$\lambda = 0.2580 \atop (\hat{s}_\lambda = 3 )$  & 3  & 0.5255 & 0.4435
 \\ \midrule
$\lambda = 0.2351 \atop (\hat{s}_\lambda = 4 )$  & 3 & 0.5536 & 0.4907
\\ \midrule
$\lambda = 0.2142 \atop (\hat{s}_\lambda =5 ) $ & 3   & 0.5791 & 0.5297 \\ \midrule
$\lambda = 0.2044 \atop (\hat{s}_\lambda =6 )$  & 3  & 0.5971 & 0.5608  \\ \midrule
$\lambda = 0.1952 \atop (\hat{s}_\lambda = 8 )$ & 3  & 0.6205 & 0.6131 \\ \midrule
$\lambda = 0.1778 \atop (\hat{s}_\lambda = 9 )$ & 3   & 0.6377 & 0.6365 \\ \midrule
$\lambda = 0.1697 \atop (\hat{s}_\lambda =11 )$ & 4   & 0.6953 & 0.6758 \\ \midrule
$\lambda = 0.1620 \atop (\hat{s}_\lambda =14 )$  & 4  & 0.7380 & 0.7208 \\ \midrule
$\lambda = 0.1409 \atop (\hat{s}_\lambda =15 )$  & 4 & \textbf{0.7490} & \textbf{0.7346} \\ \midrule
$\lambda = 0.1345 \atop (\hat{s}_\lambda =19 )$  & 4 & 0.7685 & 0.7799 \\ \midrule
$ \vdots $  & \vdots & \vdots & \vdots \\ \midrule
$\lambda = 0.0885 \atop (\hat{s}_\lambda =35 )$  &4 & 0.8428 & 0.8847 \\
\bottomrule
\end{tabular}}
\end{table} \par

\subsection{Gene expression data}\label{sec6.3}
In this section, we extend the previous study in Section~\ref{sec6.2} to an analysis of a real life data set. To further address the question that for a given data set, whether the discoveries based on certain data-mining technique are any better than spurious correlation, we consider again the gene expression data from 90 individuals (45 Japanese and 45 Chinese, JPT-CHB) from the international `HapMap' project [\cite{TSK05}] discussed in the introduction.

The gene {\em CHRNA6} is thought to be related to the activation of dopamine-releasing neurons with nicotine, and therefore has been the subject of many nicotine addiction studies [\cite{T10}]. We took the expressions of {\em CHRNA6} as the response $Y$ and the remaining $p=47292$ expressions of probes as covariates $\bX$. For a given $\lambda>0$, LASSO selects $\hat{s}_\lambda$ probes indexed by $\hat{S}_\lambda$. In particular, using ten-fold cross-validation to select the tuning parameter gives $\hat{s}_{\lambda_{0}}=25$ probes with $\lambda_{0}=0.0674$. Define fitted vectors $\hat{\mathbb{Y}}_\lambda^{{\rm LASSO}} = \mathbb{X}  \hat{\bbeta}^{{\rm LASSO}}_\lambda$ and $\hat{\mathbb{Y}}_\lambda^{{\rm pLASSO}} = \mathbb{X}_{\hat{S}_\lambda} \hat{\bbeta}^{{\rm pLASSO}}_{\lambda}$, where $\hat{\bbeta}^{{\rm LASSO}}_\lambda$ is the LASSO estimator and $\hat{\bbeta}^{{\rm pLASSO}}_\lambda$ is the post-LASSO estimator, which is the least-square estimator based on the LASSO selected set.

We depict the observed correlations between the fitted value and the response as well as the median and upper $\alpha$-quantile of the multiplier bootstrap approximation with $\alpha=10 \%$ based on $1000$ bootstrap replications in Table~\ref{tab4}.
Even though $\hat{\mbox{corr}}_n({\mathbb{Y}}, \hat{\mathbb{Y}}^{{\rm LASSO}}) = 0.8991$ and $\hat{\mbox{corr}}_n({\mathbb{Y}}, \hat{\mathbb{Y}}^{{\rm pLASSO}}) = 0.9214$, the discoveries appear to be no better than chance.  We therefore increase $\lambda$, which decreases the size of discovered probes.  From
Table 4, only the discovery of three probes is above chance results at $\alpha = 10\%$. The three probes are BBS1 -- Homo sapiens Bardet-Biedl syndrome 1, POLE2 -- Homo sapiens polymerase (DNA directed), epsilon 2 (p59 subunit), and TG737 -- Homo sapiens Probe hTg737 (polycystic kidney disease, autosomal recessive), transcript variant 2. Figure~\ref{Fig4} shows the observed correlations of the fitted values and observed values compared to the reference null distribution.

\begin{table}[h]
\centering
\caption{\label{size} Sample correlations between fitted and observed responses, and the empirical median and upper $\alpha$-quantile of the multiplier bootstrap approximation based on $1200$ bootstrap samples when $\alpha=10 \%$.}
\label{tab4}
\footnotesize{\begin{tabular}{ccccc}
{{\rm T}}rule
   &   $\hat{\mbox{corr}}_n({\mathbb{Y}}, \hat{\mathbb{Y}}^{{\rm LASSO}}_{\lambda}) $ & $\hat{\mbox{corr}}_n({\mathbb{Y}}, \hat{\mathbb{Y}}^{{\rm pLASSO}}_{\lambda}) $ &  $q^{\scMB}_{0.5}(\hat{s}_\lambda, p)  $  & $ q^{\scMB}_{0.1}(\hat{s}_\lambda, p)  $  \\ \midrule
$\lambda = 0.1789 \atop (\hat{s}_\lambda =2 )$ & 0.6813 & 0.6879 & 0.5585 & 0.5988  \\ \midrule
$\lambda = 0.1708 \atop (\hat{s}_\lambda =3 )$ & 0.6915 & 0.7010 & 0.6555 & 0.6904   \\ \midrule
$\lambda = 0.1630 \atop (\hat{s}_\lambda =4 )$ & 0.7059 & 0.7260 & 0.7252 & 0.7554 \\ \midrule
$\lambda = 0.1556 \atop (\hat{s}_\lambda =5 )$ & 0.7141 & 0.7406 & 0.7797 & 0.8044 \\ \midrule
$\lambda = 0.1292 \atop (\hat{s}_\lambda =8 )$ & 0.7454 & 0.7641 & 0.8828 & 0.8988 \\ \midrule
$\lambda = 0.1177 \atop (\hat{s}_\lambda =14 )$ & 0.7714 & 0.8307 & 0.9658 & 0.9724 \\ \midrule
$\lambda = 0.1073 \atop (\hat{s}_\lambda =17 )$ & 0.8026 & 0.8739 & 0.9817 & 0.9860 \\ \midrule
$\lambda = 0.0933 \atop (\hat{s}_\lambda =21 )$ & 0.8451 & 0.9019 & 0.9915 & 0.9945 \\ \midrule
$\lambda = 0.0891 \atop (\hat{s}_\lambda =23 )$ & 0.8561 & 0.9109 & 0.9937 & 0.9966 \\ \midrule
$\lambda = 0.0674 \atop (\hat{s}_\lambda =25 )$ & 0.8991 & 0.9214 & 0.9953 & 0.9979 \\
\bottomrule
\end{tabular}}
\end{table} \par

\begin{figure}[hbtp!]
  \centering
  \includegraphics[width=5in, trim=1.6cm 1.0cm 1.6cm 0.9cm, clip]{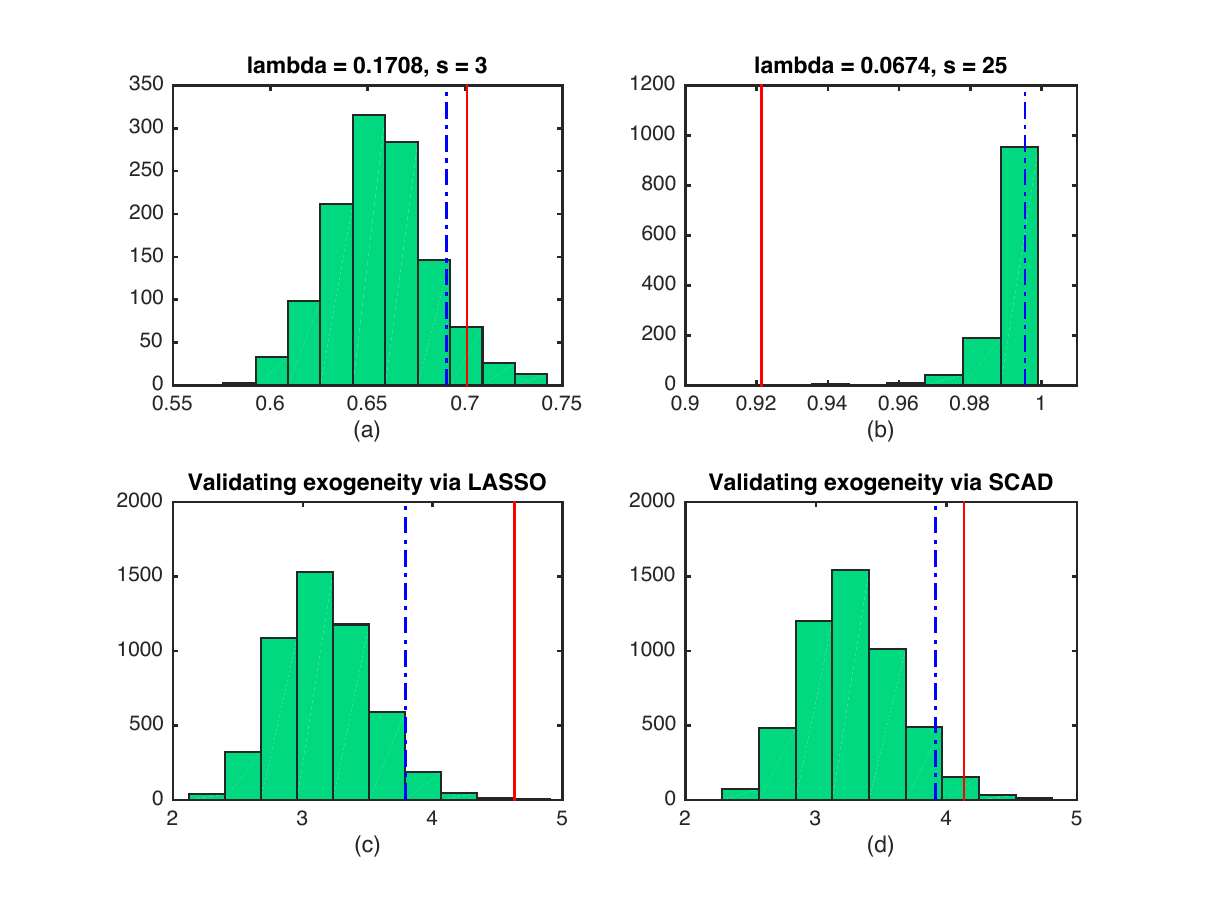}
  \caption{Top panel: Distributions of the spurious correlation $\hat{R}_n(s, p)$ estimated by the bootstrap approximation for (a) $s=3$ and (b) $s=25$ and the sample correlation between fitted and observed responses (see Table~\ref{tab4}).
  Red solid lines are observed correlations and blue dash-dot lines mark the 90th percentile in (a) the median and (b) the distributions of the median.
  Bottom panel:  Null distributions for testing exogeneity \eqref{eq1.2} and its 95th percentile (indicated by dash blue line) using bootstrap approximation \eqref{eq4.10} and observed test statistics $ \hat{T}_{n,p}^{{\rm obs}} $ (indicated by solid red line) based on the residuals of the LASSO and SCAD.
\label{Fig4} }
\end{figure}

We now use the test statistic \eqref{eq5.5} to test whether the null hypothesis \eqref{eq1.2} holds.  We take $\lambda_0 =0.0674$ and compute the observed test statistic $ \hat{T}^{{\rm obs}} _{n,p} = 4.6318$.  This corresponds to $\sqrt{n}$ times the maximum correlation presented in Figure~\ref{Fig1}. Using the null distribution provided by (\ref{eq4.10}), which can be estimated via the multiplier bootstrap, yields the $p$-value $0.001$. Further, using the SCAD gives $ \hat{T}_{n,p}^{{\rm obs}} = 4.1324$ and a $p$-value $0.0164$.  Both calculations are based on 5000 bootstrap replications. Therefore, the evidence against the exogeneity assumption is very strong. Figure~\ref{Fig4} depicts the observed test statistics relative to the null distribution.

\section{Proofs}
\label{sec7}

We first collect several technical lemmas in Section~\ref{lemmas} before proving our main result, Theorem~\ref{thm3.1} in Section~\ref{sec3}. The proofs of Theorems~\ref{thm3.2}, \ref{thm4.1} and \ref{thm4.2} are given in the supplemental material, where the proofs of Propositions~\ref{prop3.1} and \ref{prop3.2} and Lemmas~\ref{lem7.2}--\ref{lem7.6} can also be found. Throughout, the letters $C, C_1, C_2, \ldots$ and $c,c_1,c_2,\ldots$ denote  generic positive constants that are independent of $(s,p,n)$, whose values may change from line to line.

\subsection{Technical lemmas}  \label{lemmas}

The following lemma combines Propositions~5.10 and 5.16 in \cite{V12}.

\begin{lemma} \label{lem7.1}
Let $X_1,\ldots, X_n$ be independent centered random variables and write $\bx_n=(X_1, \ldots, X_n)^{{{\rm T}}} \in \bbr^n$. Then for every $\ba=(a_1,\ldots, a_n)^{{{\rm T}}} \in \bbr^n$ and every $t\geq 0$, we have
\be
	\PP\big(  |\ba^{{{\rm T}}} \bx_n|  \geq t \big) \leq 2\exp\bigg\{ -c_{{\rm B}} \min\bigg(  \frac{t^2}{ B_1^2 | \ba |_2^2}, \frac{t}{ B_1 |\ba|_\infty} \bigg) \bigg\} \label{eq7.1}
\ee
and
\be
	\PP\big(  |\ba^{{{\rm T}}} \bx_n|  \geq t \big) \leq 2\exp\bigg( -c_{{\rm H}} \frac{t^2}{B_2^2 | \ba |_2^2} \bigg), \label{eq7.2}
\ee
where $B_v = \max_{1\leq i\leq n}\| X_i \|_{\psi_v}$ for $v=1,2$ and $c_{{\rm B}},c_{{\rm H}}>0$ are absolute constants.
\end{lemma}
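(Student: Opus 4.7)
The plan is to establish both tail bounds via the classical Chernoff (Laplace transform) method, leveraging the equivalent moment generating function characterizations of the sub-Gaussian and sub-exponential norms. Independence of $X_1,\ldots,X_n$ allows the Laplace transform of $\ba^T \bx_n$ to factorize, after which each case reduces to optimizing a scalar exponent.

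For the Hoeffding-type bound \eqref{eq7.2}, I would first invoke the standard equivalence that a centered random variable $Z$ with $\|Z\|_{\psi_2}\leq K$ satisfies $E[e^{\lambda Z}]\leq \exp(C_0 K^2 \lambda^2)$ for every $\lambda\in\bbr$, where $C_0$ is an absolute constant (this follows by power-series expansion, the moment bound implicit in $\|\cdot\|_{\psi_2}$, and summing). By independence,
\[
E[e^{\lambda \ba^T\bx_n}] = \prod_{i=1}^n E[e^{\lambda a_i X_i}] \leq \exp(C_0 B_2^2 \lambda^2 |\ba|_2^2).
\]
Markov's inequality yields $P(\ba^T\bx_n\geq t)\leq \exp(C_0 B_2^2\lambda^2|\ba|_2^2 - \lambda t)$ for every $\lambda>0$; optimizing at $\lambda = t/(2C_0 B_2^2 |\ba|_2^2)$ and combining with the corresponding bound for $-\ba^T\bx_n$ via a union bound produces \eqref{eq7.2} with $c_H = 1/(4C_0)$.

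The Bernstein-type bound \eqref{eq7.1} follows the same template, but the sub-exponential analog of the moment generating function bound is valid only on a bounded range: if $Z$ is centered with $\|Z\|_{\psi_1}\leq K$, then $E[e^{\lambda Z}]\leq \exp(C_1 K^2\lambda^2)$ provided $|\lambda|\leq 1/(C_1' K)$. By independence,
\[
E[e^{\lambda\ba^T\bx_n}] \leq \exp(C_1 B_1^2\lambda^2|\ba|_2^2) \quad \text{whenever } |\lambda|\leq \lambda_\star := 1/(C_1' B_1|\ba|_\infty).
\]
I would split on the location of the unconstrained minimizer $\lambda_{{\rm opt}} = t/(2C_1 B_1^2|\ba|_2^2)$. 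If $\lambda_{{\rm opt}}\leq \lambda_\star$, the sub-Gaussian regime is active and this choice gives $\exp(-ct^2/(B_1^2|\ba|_2^2))$; otherwise the boundary choice $\lambda = \lambda_\star$ gives the linear-in-$t$ rate $\exp(-ct/(B_1|\ba|_\infty))$. Taking the minimum of the two regimes and symmetrizing with $-\ba^T\bx_n$ delivers \eqref{eq7.1}.

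The only technically nontrivial ingredient is the passage from the Orlicz-norm definitions to the moment generating function bounds above, together with the identification of the absolute constants $c_H,c_B$; these are precisely Propositions~5.10 and 5.16 of \cite{V12}. Once imported, the proof reduces to the scalar Chernoff optimizations just outlined, which are entirely routine.
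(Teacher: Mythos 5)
Your argument is correct and, at bottom, is the same as the paper's: the paper's entire proof of Lemma~\ref{lem7.1} is the one-line remark that it ``combines Propositions~5.10 and 5.16 in \cite{V12},'' and those two propositions are \emph{exactly} the two displays \eqref{eq7.1} and \eqref{eq7.2}. What you have done is unpack how those propositions are themselves proved (MGF bound from the Orlicz norm, independence to factorize, Chernoff optimization, with the truncated MGF bound forcing the two-regime minimum in the Bernstein case), which is a faithful and correct reconstruction.

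One small attribution slip at the end: Propositions~5.10 and 5.16 of \cite{V12} are the finished tail inequalities \eqref{eq7.2} and \eqref{eq7.1} themselves, not the moment-generating-function bounds you import as the ``nontrivial ingredient.'' The MGF characterizations you use appear earlier in \cite{V12} (as part of the equivalent sub-Gaussian/sub-exponential definitions, e.g.\ Lemma~5.5 and its sub-exponential analogue). So strictly speaking, once you cite Propositions~5.10 and 5.16 you are done and the Chernoff calculation is redundant; alternatively, cite the MGF lemmas and carry out the Chernoff step as you did. Either way the lemma holds, and the argument is sound.
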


\begin{lemma} \label{lem7.2}
Let Conditions~\ref{moment.cond} and \ref{sampling.condition} be fulfilled. Write
$$
	D_n = D_n(s,p)   := \sup_{ \balpha \in \mathcal{V}} \big|   \balpha^{{{\rm T}}} \hat \bSigma_n \balpha / \balpha^{{{\rm T}}} \bSigma \balpha   -1 \big| \ \ \mbox{ and } \ \  \hat{\sigma}_{\varepsilon}^2= n^{-1} \sn ( \varepsilon_i - \bar{\varepsilon}_n )^2,
$$
where $\hat \bSigma_n =   n^{-1}\sn (\bX_i -\bar{\bX}_n)(\bX_i -\bar{\bX}_n)^{{{\rm T}}} $ and $\mathcal{V}$ is as in \eqref{eq3.2}. Then, there exists a constant $C_1 >0$ such that, for every $t \geq 1$,
\begin{align}
	 D_n  \leq C_1   K_1^2\bigg[   \s{ \frac{s}{n} \log (\gamma_s ep /s )  } +  \max\bigg\{ \s{\f{t}{n}}  ,  c_n(s,p) \f{t  }{n} \bigg\} \bigg] \label{eq7.3}
\end{align}
holds with probability at least $1-8 e^{-t}$, where $c_n(s,p) := s \log (\gamma_s ep /s )   \vee \log n$. Moreover, for every $t >0$,
\be
	  \big|   \hat{\sigma}_{\varepsilon}^2 -1 \big| \leq   K_0^2 \, n^{-1} t  + 4 K_0^2 \max\big( n^{-1/2}\sqrt{t}  , n^{-1} t \big)	  \label{eq7.4}
\ee
holds with probability greater than $1- 2\exp(-c_{{\rm B}} t ) - 2\exp(-c_{{\rm H}} t )$, where $c_{{\rm B}}, c_{{\rm H}} >0$ are absolute constants as in \eqref{eq7.1} and \eqref{eq7.2}.
\end{lemma}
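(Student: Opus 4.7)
The plan is to reduce both bounds to Lemma~\ref{lem7.1} applied to well-chosen sub-exponential statistics, with the first bound closed by a covering-plus-bootstrap argument that naturally produces the $s$-sparse condition number $\gamma_s$. For the first inequality, introduce the normalized linear functional $g_{\balpha}(\bx) := \langle \balpha_{\bSigma}, \bx \rangle = \langle \balpha, \bx \rangle/|\balpha|_{\bSigma}$. Under Condition~\ref{moment.cond}, $g_{\balpha}(\bX) = \langle \bSigma^{1/2}\balpha_{\bSigma}, \bU\rangle$ with $|\bSigma^{1/2}\balpha_{\bSigma}|_2 = 1$, so $\|g_{\balpha}(\bX)\|_{\psi_2} \leq K_1$ and hence $\|g_{\balpha}(\bX)^2 - 1\|_{\psi_1} \lesssim K_1^2$. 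A direct algebraic identity gives
\[
\frac{\balpha^T \hat{\bSigma}_n \balpha}{\balpha^T \bSigma \balpha} - 1 \;=\; \frac{1}{n}\sum_{i=1}^n \big(g_{\balpha}(\bX_i)^2 - 1\big) - \bar{g}_{\balpha}^2 ,
\]
so that $D_n \leq T_1 + T_2$, where $T_1 := \sup_{\balpha \in \mathcal{F}}|n^{-1}\sum_{i=1}^n(g_{\balpha}(\bX_i)^2 - 1)|$ and $T_2 := \sup_{\balpha \in \mathcal{F}} \bar{g}_{\balpha}^2$.

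For $T_1$, apply Bernstein's inequality~\eqref{eq7.1} at each fixed $\balpha$ to obtain $P(|n^{-1}\sum_{i=1}^n(g_{\balpha}(\bX_i)^2 - 1)| \geq u) \leq 2\exp\{-c\min(nu^2/K_1^4, nu/K_1^2)\}$, and then upgrade to a uniform bound via an $\eta$-net of the $s$-sparse unit sphere in $\ell_2$-norm of cardinality at most $\binom{p}{s}(3/\eta)^s$. The key Lipschitz estimate, needed both for the discretization error and for controlling $T_2$, is
\[
\|g_{\balpha} - g_{\balpha'}\|_{\psi_2} \;\leq\; K_1 |\balpha_{\bSigma} - \balpha'_{\bSigma}|_{\bSigma} \;\lesssim\; K_1 \gamma_s |\balpha - \balpha'|_2,
\]
the last step following from the decomposition $\balpha_{\bSigma} - \balpha'_{\bSigma} = (\balpha - \balpha')/|\balpha|_{\bSigma} + \balpha'(1/|\balpha|_{\bSigma} - 1/|\balpha'|_{\bSigma})$ combined with the sparse eigenvalue bounds $|\balpha|_{\bSigma} \geq \sqrt{\phi_{\min}(s)}$ and $|\balpha - \balpha'|_{\bSigma} \leq \sqrt{\phi_{\max}(2s)}|\balpha - \balpha'|_2$. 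Choosing $\eta$ of order $1/\gamma_s$ keeps the mesh error small and inflates the entropy to $s\log(\gamma_s ep/s)$; a Cauchy--Schwarz bootstrap on the cross term $n^{-1}\sum_{i=1}^n \langle \bw + \bw', \bU_i\rangle\langle \bw - \bw', \bU_i\rangle$ then closes the loop, with the two tail summands $\sqrt{t/n}$ and $c_n(s,p) t/n$ in \eqref{eq7.3} emerging directly from the sub-Gaussian and sub-exponential regimes of Bernstein. Term $T_2$ is handled identically, using inequality~\eqref{eq7.2} on the linear functional $\bar{g}_{\balpha}$, and is of smaller order.

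For the second inequality, write $\hat{\sigma}_{\varepsilon}^2 - 1 = n^{-1}\sum_{i=1}^n(\varepsilon_i^2 - 1) - \bar{\varepsilon}_n^2$. Apply Bernstein~\eqref{eq7.1} to the centred sub-exponential variables $\varepsilon_i^2 - 1$ (whose $\psi_1$-norm is $\lesssim K_0^2$) with coefficient vector $\ba = n^{-1}\mathbf{e}_n$, together with Hoeffding~\eqref{eq7.2} applied to $\bar{\varepsilon}_n$; combining the two tail bounds and rearranging yields \eqref{eq7.4}. The main obstacle is the bootstrap step for $T_1$: the approximation error at mesh scale $\eta$ is itself controlled by the very empirical quadratic form one is trying to bound, so one must choose $\eta \asymp 1/\gamma_s$ small enough that the error absorbs into a small multiple of the supremum and then solve for $T_1$. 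Keeping track of both tail regimes of Bernstein uniformly through this rearrangement is the technical point, but becomes routine once the Lipschitz estimate above is in place.
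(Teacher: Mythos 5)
Your route to \eqref{eq7.3} is genuinely different from the paper's. The paper never discretizes $\mathcal{F}$ at all for this lemma: it applies Adamczak's Talagrand-type concentration inequality to reduce $D_{n,1}=\sup_{\balpha\in\mathcal{F}}\big|n^{-1}\sum_i g_{\balpha}(\bX_i)^2-1\big|$ to a deviation around $2\,E\,D_{n,1}$, and then bounds the expectation by generic chaining via Mendelson's theorem, while you replace all of this by an $\eta$-net, a pointwise Bernstein bound, a union bound, and a bootstrap to pass from the net to the full supremum. In principle this more elementary route works and even gives a slightly tighter $t$-dependence than the stated bound (a $t/n$ tail instead of $c_n(s,p)t/n$), but the bootstrap step as you describe it has a real gap.

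The problem is the ``Cauchy--Schwarz bootstrap'' on the cross term $n^{-1}\sum_{i}\langle\bw+\bw',\bU_i\rangle\langle\bw-\bw',\bU_i\rangle$. If you apply Cauchy--Schwarz to this empirical sum, you obtain
\[
\Big|n^{-1}\sum_{i}\langle\bw+\bw',\bU_i\rangle\langle\bw-\bw',\bU_i\rangle\Big|
\le \Big(n^{-1}\sum_i\langle\bw+\bw',\bU_i\rangle^2\Big)^{1/2}\Big(n^{-1}\sum_i\langle\bw-\bw',\bU_i\rangle^2\Big)^{1/2}
\lesssim \gamma_s\eta\,(1+D_{n,1}),
\]
so with $\eta\asymp 1/\gamma_s$ the recursion $D_{n,1}\le M + C\gamma_s\eta\,(1+D_{n,1})$ only yields $D_{n,1}\lesssim M + \gamma_s\eta$, which does \emph{not} vanish: the constant $\gamma_s\eta$ is fixed while $M\to 0$. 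The empirical Cauchy--Schwarz bound ignores the crucial cancellation. The correct way to close the loop is to center by $\bSigma$ first. Writing $\bdelta=\balpha_\bSigma-\balpha'_\bSigma$, the polarization identity $\bdelta^T\bSigma(\balpha_\bSigma+\balpha'_\bSigma)=|\balpha_\bSigma|_\bSigma^2-|\balpha'_\bSigma|_\bSigma^2=0$ means the cross term is genuinely centered, and then
\[
\big|\bdelta^T(\hat\bSigma_n-\bSigma)(\balpha_\bSigma+\balpha'_\bSigma)\big|
\le |\bdelta|_\bSigma\,|\balpha_\bSigma+\balpha'_\bSigma|_\bSigma\cdot D_{n,1}
\le 2\gamma_s\eta\,D_{n,1},
\]
using that for a symmetric matrix the supremum of the bilinear form over $\mathcal{F}\times\mathcal{F}$ equals the supremum of the quadratic form. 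This gives $D_{n,1}\le M/(1-2\gamma_s\eta)$ and the argument closes cleanly. Equivalently, work per support $S$ with the whitened matrix $B_S=\bSigma_{SS}^{-1/2}(\hat\bSigma_{n,SS}-\bSigma_{SS})\bSigma_{SS}^{-1/2}$ and invoke the standard fact that for a $1/4$-net $\mathcal{W}$ of the unit sphere, $\|B_S\|\le 2\max_{\bw\in\mathcal{W}}|\bw^T B_S\bw|$; this version even removes $\gamma_s$ from the entropy count.

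Two smaller points. First, you invoke $\phi_{\max}(2s)$ in the Lipschitz estimate, but the paper's net $\mathcal{N}_\epsilon=\bigcup_S\mathcal{N}_{S,\epsilon}$ is constructed support-by-support, so $\balpha-\balpha'$ remains $s$-sparse and $\phi_{\max}(s)$ is enough; otherwise you end up with $\gamma_{2s}$ rather than the advertised $\gamma_s$. Second, your treatment of $T_2=\sup_{\balpha}\bar g_\balpha^2$ and of the scalar bound \eqref{eq7.4} via Lemma~\ref{lem7.1} matches the paper and is fine.
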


The following results address the concentration and anti-concentration phenomena of the supremum of the Gaussian process $\mathbb{G}^*$ indexed by $\mathcal{F}$ (see \eqref{eq3.4}). In line with \cite{CCK13}, inequalities \eqref{eq7.15} and \eqref{eq7.16} below are referred to as the concentration and anti-concentration inequalities, respectively.

\begin{lemma} \label{lem7.3}
Let $R^*(s,p) = \sup_{f_{\balpha} \in \mathcal{F}}   f_{\balpha}(\bZ)  / |\balpha|_{{ \bSigma}}$
for $\mathcal{F}=\mathcal{F}(s,p)$ given in \eqref{eq3.2} and $\bZ \sta{d}{=} N(\mo,\bSigma)$. Then there exists an absolute constant $C>0$ such that, for every $p\geq 2$, $1\leq s\leq p$ and $t  >0$,
\begin{align}
\PP\big\{ R^*(s,p) \geq  C \s{ s \log(  \gamma_s e p / s)  } + t  \big\}   \leq e^{-t^2/2} \label{eq7.15} \\
 \mbox{ and }  \ \  \sup_{ x \geq 0 } \PP\big\{ | R^*(s,p) -x | \leq t \big\}   \leq C   t  \s{   s \log(  \gamma_s e p / s) }  , \label{eq7.16}
\end{align}
where $\gamma_s=\s{\phi_{\max}(s)}/\s{\phi_{\min}(s)}$.
\end{lemma}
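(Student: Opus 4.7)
The plan is to decompose the argument into three pieces: (i) a bound on the expectation $E R^\ast(s,p) \lesssim \sqrt{s\log(\gamma_s ep/s)}$, (ii) Borell--Tsirelson--Ibragimov--Sudakov (BTIS) concentration to deduce \eqref{eq7.15}, and (iii) a Gaussian anti-concentration inequality for suprema to deduce \eqref{eq7.16}.

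For step (i), I would reuse the $\epsilon$-net construction from the argument preceding \eqref{eq7.7} and \eqref{eq7.11}. Namely, since $\mathcal{F}$ consists of $s$-sparse vectors with $|\balpha|_{\bSigma}$-normalization, I take an $\epsilon$-net $\mathcal{N}=\mathcal{N}_{\epsilon_s}$ with $\epsilon_s=(4\gamma_s)^{-1}$, whose cardinality is controlled by the bound $|\mathcal{N}| \leq \{(2+\epsilon_s)\,ep/(\epsilon_s s)\}^s \lesssim (\gamma_s ep/s)^s$ (union over $\binom{p}{s}$ supports, times an $\epsilon$-net of the $s$-dimensional unit sphere). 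The discretization inequality of the same form as \eqref{eq7.7} yields $R^\ast(s,p) \leq \tfrac{4}{3}\max_{\balpha\in\mathcal{N}}\langle\balpha_{\bSigma},\bZ\rangle$. Since $\mathrm{Var}(\langle\balpha_{\bSigma},\bZ\rangle)=\balpha_{\bSigma}^T\bSigma\balpha_{\bSigma}=1$ for every $\balpha\in\mathcal{F}$, the standard maximal bound for a finite family of standard Gaussians gives $E\max_{\balpha\in\mathcal{N}}\langle\balpha_{\bSigma},\bZ\rangle \leq \sqrt{2\log|\mathcal{N}|}\lesssim \sqrt{s\log(\gamma_s ep/s)}$, completing step (i).

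For step (ii), since $\{\langle\balpha_{\bSigma},\bZ\rangle\}_{\balpha\in\mathcal{F}}$ is a centered (separable) Gaussian process with $\sup_{\balpha\in\mathcal{F}}\mathrm{Var}(\langle\balpha_{\bSigma},\bZ\rangle)=1$, the BTIS inequality immediately yields $P\{R^\ast(s,p)\geq E R^\ast(s,p)+t\}\leq e^{-t^2/2}$, and combining with step (i) gives \eqref{eq7.15}. For step (iii), I invoke the anti-concentration inequality for suprema of separable centered Gaussian processes from Chernozhukov--Chetverikov--Kato (e.g.\ their Corollary on Gaussian anti-concentration used throughout \cite{CCK14a}): if $\{X_t\}_{t\in T}$ is such a process with $\underline\sigma:=\inf_t(E X_t^2)^{1/2}>0$ and $\bar\sigma:=\sup_t(E X_t^2)^{1/2}<\infty$, then $\sup_{x\in\bbr} P\{|\sup_t X_t-x|\leq \epsilon\}\leq C\epsilon\,(E\sup_t X_t+\underline\sigma)/\underline\sigma^{2}$. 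In our setting $\underline\sigma=\bar\sigma=1$, so together with the expectation bound from step (i) this produces \eqref{eq7.16} directly.

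The main obstacle is really step (i): the supremum runs over an uncountable, non-convex set (a union of low-dimensional spheres scaled by $1/|\cdot|_{\bSigma}$), and the factor $(1-\gamma_s\epsilon)^{-1}$ in the discretization inequality is what forces the $\gamma_s$ inside the log and explains why the $s$-sparse condition number enters. Once the net cardinality $|\mathcal{N}|\lesssim(\gamma_s ep/s)^s$ is established, everything else is a direct quotation of two classical Gaussian inequalities (BTIS and CCK anti-concentration); the variance being identically $1$ on $\mathcal{F}$ is precisely what makes both invocations clean.
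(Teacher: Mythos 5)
Your proposal is correct and matches the paper's proof essentially step for step: the paper also rewrites $R^*(s,p)=\sup_{\balpha\in\mathcal{F}}\langle\balpha_{\bSigma},\bZ\rangle$, notes that this is a centered unit-variance Gaussian process, bounds its expectation via exactly the $\epsilon$-net argument of \eqref{eq7.11} to get $E\,R^*(s,p)\lesssim\sqrt{s\log(\gamma_s ep/s)}$, and then deduces \eqref{eq7.15} from Borell's inequality (Proposition A.2.1 of van der Vaart--Wellner, the same BTIS inequality you invoke) and \eqref{eq7.16} from Lemma A.1 of Chernozhukov--Chetverikov--Kato. Your more general statement of the CCK anti-concentration bound with $\underline\sigma,\bar\sigma$ reduces to their unit-variance Lemma A.1 upon setting $\underline\sigma=\bar\sigma=1$, so the invocation is sound.
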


\begin{lemma}  \label{lem7.4}
Suppose that $a\geq 1$ and $b_j, c_j >0$ for $j=1,\ldots, m$ are positive constants. Let $X_1, \ldots, X_m$ be real-valued random variables that satisfy
$$
	\PP(|X_j| \geq t ) \leq a  \exp\{-t^2/(2 b_j)\},    \quad \mbox{ for } t>0, \ \ j=1, \ldots, m.
$$
Then, for all $m \geq 4/a$, we have $\e ( \max_{1\leq j\leq m} |X_j|  ) \leq 2 \s{ \log(am ) \max_{1\leq j\leq m} b_j}$. Furthermore, suppose that $\PP(|X_j| \geq t ) \leq a   \exp(-t/c_j )$ holds for all $t>0$ and $j=1, \ldots, m$. Then, for any $m \geq 4/a$, we have $\e ( \max_{1\leq j\leq m} |X_j|  ) \leq \{ \log(am) + 1 \} \max_{1\leq j\leq m} c_j$.
\end{lemma}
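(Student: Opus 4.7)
The plan is to combine the layer-cake identity with a union bound and then optimize the cutoff threshold, which is the standard route for maximal inequalities of this type. Let $M=\max_{1\le j\le m}|X_j|$. For any cutoff $t_0>0$,
\begin{equation*}
E M \;=\; \int_0^\infty P(M\ge t)\,dt \;\le\; t_0 + \sum_{j=1}^m \int_{t_0}^\infty P(|X_j|\ge t)\,dt,
\end{equation*}
so everything reduces to estimating the integrated tails and choosing $t_0$ to balance the two contributions.

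For the sub-Gaussian case with $b=\max_j b_j$, I would use the Mills-type estimate $\int_{t_0}^\infty e^{-t^2/(2b)}\,dt \le (b/t_0)\,e^{-t_0^2/(2b)}$ (valid for $t_0>0$), yielding
\begin{equation*}
E M \;\le\; t_0 + \frac{amb}{t_0}\,e^{-t_0^2/(2b)}.
\end{equation*}
Setting $t_0=\sqrt{2b\log(am)}$ makes the exponential factor exactly $1/(am)$, which gives
\begin{equation*}
E M \;\le\; \sqrt{b\log(am)}\;\Bigl(\sqrt{2}+\tfrac{1}{\sqrt{2}\,\log(am)}\Bigr).
\end{equation*}
It then remains to check that the parenthesized factor is bounded by $2$ under the hypothesis $m\ge 4/a$. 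Since $a\ge 1$ forces $am\ge 4$, we have $\log(am)\ge \log 4 \approx 1.386$, while $\sqrt{2}+x\le 2$ requires only $x\le 2-\sqrt{2}$, i.e. $\log(am)\ge 1/(2(\sqrt{2}-1))\approx 1.207$. The required inequality holds.

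The sub-exponential bound follows the same template but is cleaner: with $c=\max_j c_j$, the tail integral is $\int_{t_0}^\infty e^{-t/c}\,dt=c\,e^{-t_0/c}$, so
\begin{equation*}
E M \;\le\; t_0 + amc\,e^{-t_0/c},
\end{equation*}
and choosing $t_0=c\log(am)$ turns the second term into exactly $c$, giving $E M\le c(\log(am)+1)$, as claimed. I do not foresee a substantive obstacle here; the only point requiring any care is the numerical verification in the sub-Gaussian case that the mild assumption $am\ge 4$ is strong enough to absorb the lower-order correction term into the constant $2$. Everything else is routine integration and a union bound.
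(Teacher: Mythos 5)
Your proof is correct and follows essentially the same route as the paper: layer-cake identity with a truncation threshold, union bound, Mills-type tail estimate, and the choice $t_0=\sqrt{2b\log(am)}$ (resp.\ $t_0=c\log(am)$), with the hypothesis $am\ge 4$ used only to absorb the lower-order term into the constant. The only cosmetic difference is that you replace each $b_j$ by $\max_j b_j$ before integrating, whereas the paper substitutes $u=t/\sqrt{b_j}$ first and bounds the Gaussian tail as $\min(\sqrt{\pi/2},\sqrt{B}/T)e^{-T^2/(2B)}$; both variants yield the same final bound.
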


To save space, we leave the proofs of Lemmas~\ref{lem7.2}--\ref{lem7.4} to Appendix~\ref{Appendix.A} in the supplemental material.

\subsection{Proof of Theorem~\ref{thm3.1}}
\label{sec7.2}

In view of \eqref{eq3.1}, we have
\begin{align*}
	\wh  R_{n}(s,p)   = \sup_{ \balpha  \in \mathcal{V}} \f{n^{-1}\sn   \balpha^{{\rm T}} ( \varepsilon_i \bX_i  ) - \bar{\varepsilon}_n   \balpha^{{{\rm T}}} \bar{\bX}_n }{ (\balpha^{{{\rm T}}} \hat{\bSigma}_n \balpha)^{1/2}	\cdot \{ n^{-1}\sn (\varepsilon_i-\bar{\varepsilon}_n )^2 \}^{1/2} } ,
\end{align*}
where $\mathcal{V}$ is as in \eqref{eq3.2}.

By Lemma~\ref{lem7.2}, instead of dealing with $\wh R_n(s,p)$ directly, we first investigate the asymptotic behavior of its standardized counterpart given by
\be
 R_{n}(s,p)  = \sup_{\balpha \in \mathcal{V}}
	n^{-1}\sn \f{   \balpha^{{\rm T}}  ( \varepsilon_i \bX_{i } ) }{|\balpha |_{{ \bSigma}}}  =   \sup_{\balpha \in \mathcal{V}}   n^{-1} \sn   \balpha_{{ \bSigma}}^{{\rm T}} \by_i  ,  \label{eq7.17}
\ee
where $\by_i = \varepsilon_i \bX_i=(Y_{i1},\ldots, Y_{ip})^{{{\rm T}}}$ are i.i.d. random vectors with mean zero and covariance matrix $\bSigma$. Let $\PP_\by$ be the probability measure on $\bbr^p$ induced by $\by = \varepsilon \bX$. Further, define rescaled versions of $\wh R_n(s,p)$ and $R_n(s,p)$ as
\be
\hat L_{n }= \hat L_n(s,p) = \s{n}  \wh R_n(s,p), \ \ 	L_{n} = L_n(s,p)  = \sup_{\balpha \in \mathcal{V}}  n^{-1/2} \sn  \balpha_{{ \bSigma}}^{{{\rm T}}}  \by_{i} .  \label{eq7.18}
\ee

The main strategy is to prove the Gaussian approximation of $L_{n} $ by the supremum of a Gaussian process $\mathbb{G}^*$ indexed by $\mathcal{F}$ with covariance function
$$
	\e \big( \mathbb{G}^*f_{\balpha_1} \mathbb{G}^*f_{\balpha_2} \big)  = \frac{ \balpha_1^{{{\rm T}}} \bSigma \balpha_2}{  |\balpha_1|_{{ \bSigma}} \cdot |\balpha_2|_{{ \bSigma}} } , \ \ \balpha_1, \balpha_2 \in \mathcal{V}.
$$
Let $\bZ$ be a $p$-variate centered Gaussian random vector with covariance matrix $\bSigma$. Then the aforementioned Gaussian process $\mathbb{G}^*$ can be induced by $\bZ$ in the sense that for every $\balpha \in \mathcal{V}$, $\mathbb{G}^*f_{\balpha} =  \balpha_{{ \bSigma}}^{{{\rm T}}} \bZ $. The following lemmas show that, under certain moment conditions, the distribution of $L_{n}=\s{n}  R_{n}(s,p)$ can be consistently estimated by that of the supremum of the Gaussian process $\mathbb{G}^*$, denoted by $R^*(s,p)= \sup_{\balpha  \in \mathcal{V}} \mathbb{G}^*f_{\balpha}$, and $\hat{L}_n$ and $L_n$ are close.  We state them first in the following two lemmas and prove them in Appendix~\ref{Appendix.A} of the supplemental material.

\begin{lemma}  \label{lem7.5}
Under Conditions~\ref{moment.cond} and \ref{sampling.condition}, there exists a random variable $T^* = T^*(s,p) \stackrel{d}{=} \sup_{\balpha \in \mathcal{V}}    \balpha_{{ \bSigma}}^{{\rm T}} \bZ $ for $\bZ \sta{d}{=} N(\mo, \bSigma)$ such that, for any $\de \in (0 ,  K_0 K_1]$,
\begin{align}
	   |  L_n  -  T^*  |  \lesssim   n^{-1} c^{1/2}_{n}(s,p)  +   K_0 K_1 \, n^{-3/2}  c^2_{n}(s,p)  + \delta 	\label{eq7.19}
\end{align}
holds with probability at least $1-C  \De_{n }(s, p\, ; \de) $, where $c_{n}(s,p) = s\log( \gamma_s e p /s )  \vee \log n$ and
\beq
	\De_{n}(s, p\, ;\de)  =  (K_0 K_1)^3  \frac{ \{ s  b_n(s,p) \}^2}{\de^3 \s{n}} + (K_0 K_1)^4 \f{\{ s  b_n(s,p) \}^5}{\de^4  n}
\eeq
with $b_n(s,p) = \log( \gamma_s p/s ) \vee \log n$.
\end{lemma}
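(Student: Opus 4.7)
The plan is to establish the coupling in \eqref{eq7.19} by first discretizing $\mathcal{F}$ via a finite net, then applying the high-dimensional Gaussian coupling of \cite{CCK14a} to the resulting finite vector of normalized partial sums, and finally controlling the oscillation of the empirical and Gaussian processes between net points.

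First I would construct a $\delta$-net $\mathcal{N} \subset \mathcal{F}$ in the metric $\rho(\balpha,\balpha') = |\balpha_{\bSigma} - \balpha'_{\bSigma}|_{\bSigma}$. Because elements of $\mathcal{F}$ are supported on one of $\binom{p}{s}$ index sets and have unit $\ell_2$-norm on that block, the same volumetric covering argument already used in the proof of Lemma~\ref{lem7.2} gives $|\mathcal{N}| \le \{(2+\delta)ep/(\delta s)\}^s$, hence $\log|\mathcal{N}| \lesssim s\log(\gamma_s p/s) + s\log(1/\delta) \lesssim s\,b_n(s,p)$ once $\log(1/\delta)$ is absorbed into $b_n$ under the harmless constraint $\delta \gtrsim n^{-1}$.

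Next consider the mean-zero random vector $W = (W_{\balpha})_{\balpha \in \mathcal{N}}$ with $W_{\balpha} = n^{-1/2} \sum_{i=1}^n \langle \balpha_{\bSigma}, \by_i\rangle$ and $\by_i = \varepsilon_i \bX_i$. Each coordinate has unit variance, and each summand $\varepsilon_i \langle \balpha_{\bSigma}, \bX_i\rangle$ is a product of two sub-Gaussians, hence sub-exponential with $\|\cdot\|_{\psi_1} \lesssim K_0 K_1$ by Condition~\ref{moment.cond}. I would then invoke the high-dimensional Gaussian coupling theorem of \cite{CCK14a} (in its sub-exponential version, as developed in their Section 4 and Corollary 4.1) to produce, on an enlarged probability space, a centered Gaussian vector $Z^\ast \in \bbr^{|\mathcal{N}|}$ with the same covariance as $W$ such that
\begin{equation*}
P\bigl( |\max_{\balpha} W_{\balpha} - \max_{\balpha} Z^\ast_{\balpha}| > \delta \bigr) \le C\Delta_{n}(s,p;\delta).
\end{equation*}
The exponents $\delta^{-3} n^{-1/2}$ and $\delta^{-4} n^{-1}$ arise exactly from the third-moment and the truncated-fourth-moment terms in the CCK bound, while the powers $(s\,b_n)^{2}$ and $(s\,b_n)^{5}$ come from $\log|\mathcal{N}|$ being raised to the second and fifth powers, respectively, in those two terms. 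The Gaussian vector $Z^\ast$ extends canonically to a Gaussian process $\mathbb{G}^\ast$ on all of $\mathcal{F}$ with the same covariance as $\langle (\cdot)_{\bSigma}, \bZ\rangle$ for $\bZ \stackrel{d}{=} N(\mo,\bSigma)$, so defining $T^\ast := \sup_{\balpha\in\mathcal{F}} \mathbb{G}^\ast_{\balpha}$ yields the desired marginal law.

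Finally I would pass from the finite max on $\mathcal{N}$ back to the supremum over $\mathcal{F}$ on both sides. Any $\balpha \in \mathcal{F}$ is within $\delta$ of some $\balpha^{\flat}\in\mathcal{N}$, so
\begin{equation*}
|L_n - \max_{\balpha\in\mathcal{N}} W_{\balpha}| \le \sup_{\rho(\balpha,\balpha')\le\delta} \Bigl| n^{-1/2}\sum_{i=1}^n \langle \balpha_{\bSigma} - \balpha'_{\bSigma}, \by_i\rangle \Bigr|,
\end{equation*}
and similarly for the Gaussian side. I would bound these oscillations via a finer secondary net (of cardinality controlled by $c_n(s,p)$) together with Lemma~\ref{lem7.1} applied to sub-exponential sums, yielding via its two regimes a remainder of order $n^{-1} c_n^{1/2}(s,p) + K_0K_1 n^{-3/2} c_n^{2}(s,p)$, matching the last two terms in \eqref{eq7.19}; the analogous oscillation for $\mathbb{G}^\ast$ is of the same or smaller order by the Borell--Sudakov--Tsirelson inequality together with Lemma~\ref{lem7.3}. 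The main obstacle is the simultaneous calibration of the net accuracy $\delta$ against the CCK error: a finer net shrinks the oscillation but blows up $\Delta_n$ through $\delta^{-3}$ and $\delta^{-4}$, and the sub-exponential (rather than sub-Gaussian) tails of $\by_i = \varepsilon_i \bX_i$ force us into Bernstein-type bounds and the sub-exponential variant of CCK, which is precisely what produces the asymmetric powers $(sb_n)^{2}$ versus $(sb_n)^{5}$ in $\Delta_n$. The constraint $\delta \le K_0 K_1$ in the statement is exactly the range in which the sub-exponential Bernstein bound used at the oscillation step stays in its linear (rather than quadratic) regime.
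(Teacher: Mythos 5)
Your overall strategy matches the paper's: discretize $\mathcal{F}$, apply the Chernozhukov--Chetverikov--Kato coupling to the finite vector, then control oscillation. However, there is a genuine gap in how you calibrate the net. You build your net at resolution $\delta$, the same parameter you feed to the CCK coupling, and constrain $\delta\gtrsim n^{-1}$ to absorb $\log(1/\delta)$ into $b_n$. But then the oscillation between net points at mutual distance $\le\delta$ is of order $\delta\cdot\sup_{\balpha}\langle\balpha_{\bSigma},\bW_n\rangle\approx\delta\,c_n^{1/2}(s,p)$, not the $n^{-1}c_n^{1/2}(s,p)+K_0K_1 n^{-3/2}c_n^2(s,p)$ that you claim (and that \eqref{eq7.19} requires). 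Those two remainder terms are $\epsilon_s\{c_n^{1/2}+K_0K_1 n^{-1/2}c_n^2\}$ with $\epsilon_s=\gamma_s\epsilon$, and they are only obtained by fixing the net resolution at $\epsilon=(\gamma_s n)^{-1}$ \emph{independently} of the CCK parameter $\delta$, which must remain free (it is optimized only later, in the proof of Theorem~\ref{thm3.1}). The paper keeps these two scales separate throughout Steps~1--3, using the multiplicative bound $L_n\le(1-\gamma_s\epsilon)^{-1}\max_{\balpha\in\mathcal{N}_\epsilon}\langle\balpha_{\bSigma},\bW_n\rangle$ of \eqref{eq7.25}; using a single $\delta$ for both roles does not reproduce the stated error.

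A secondary concern: you assert that the CCK Gaussian vector $Z^*$ ``extends canonically'' to a process $\mathbb{G}^*$ on all of $\mathcal{F}$ with the right covariance. The coupled vector lives on an enlarged probability space and was only constructed on the net; there is no canonical joint extension. The paper sidesteps this by not extending the process at all --- it invokes Lemma~4.1 of \cite{CCK14a} (a variant of Strassen's theorem) together with the anti-concentration inequality to couple the finite-net Gaussian maximum $T^*_\epsilon$ directly to a variable $T^*$ with the target marginal law $\sup_{\balpha\in\mathcal{F}}\langle\balpha_{\bSigma},\bZ\rangle$. You would need either this Strassen argument or an explicit joint Gaussian construction to make your sketch rigorous. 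Finally, the constraint $\delta\le K_0K_1$ in the lemma is not about keeping a Bernstein bound in its linear regime at the oscillation step; it is what lets the $\delta^{-2}$ terms in the raw CCK bound \eqref{eq7.30} be absorbed into the $\delta^{-3}$ and $\delta^{-4}$ terms, producing the cleaner $\Delta_n$ in the statement.
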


\begin{lemma}  \label{lem7.6}
Let Conditions~\ref{moment.cond} and \ref{sampling.condition} hold. Assume that the sample size satisfies $n\geq C_1 (K_0 \vee K_1)^4 c_n(s,p)$. Then, with probability at least $1-C_2 \, n^{-1/2}  c^{1/2}_n(s,p)$,
\begin{align}
	 | \hat L_n -  L_n  |  \lesssim    (K_0\vee K_1)^2 K_0K_1  \, n^{-1/2}   c_n(s,p) , \label{eq7.20}
\end{align}
where $c_n(s,p)  = s\log(  \gamma_s e p /s ) \vee \log n$.
\end{lemma}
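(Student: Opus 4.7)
The plan is to write both $\hat L_n$ and $L_n$ as suprema of ratios over $\balpha \in \mathcal{F}$ and apply the elementary identity
\begin{align*}
  \frac{N(\balpha)}{D(\balpha)} - \frac{N_0(\balpha)}{D_0(\balpha)}
  = \frac{N(\balpha) - N_0(\balpha)}{D(\balpha)}
  + \frac{N_0(\balpha)}{D_0(\balpha)}\cdot\frac{D_0(\balpha) - D(\balpha)}{D(\balpha)},
\end{align*}
combined with $|\sup_{\balpha} a(\balpha) - \sup_{\balpha} b(\balpha)| \leq \sup_{\balpha}|a(\balpha)-b(\balpha)|$. Here $N(\balpha) = n^{-1/2}\sum_i (\varepsilon_i - \bar\varepsilon_n)\lea \balpha, \bX_i - \bar{\bX}_n\ria$, $D(\balpha) = \|\balpha\|_n \hat\sigma_\varepsilon$, $N_0(\balpha) = n^{-1/2}\sum_i \varepsilon_i \lea \balpha, \bX_i\ria$, and $D_0(\balpha) = |\balpha|_{\bSigma}$. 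A one-line calculation using $\sum_i(\varepsilon_i - \bar\varepsilon_n)=0$ yields the key simplification $N(\balpha) - N_0(\balpha) = -\sqrt{n}\, \bar\varepsilon_n\, \lea \balpha, \bar{\bX}_n\ria$, so the centering correction factorizes cleanly.

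For the centering piece, sub-Gaussianity of $\varepsilon$ (Lemma~\ref{lem7.1}) gives $\sqrt{n}|\bar\varepsilon_n| \lesssim K_0\sqrt{\log n}$ on a high-probability event. To bound $\sup_{\balpha \in \mathcal{F}} |\lea \balpha, \bar{\bX}_n\ria|$, note that for $|\balpha|_2 = 1$, $|\balpha|_0 = s$, this supremum equals the $\ell_2$-norm of the top-$s$ coordinates of $\bar{\bX}_n$; a union bound on the sub-Gaussian coordinate averages $\bar X_{n,j}$ via Lemma~\ref{lem7.1} yields $\max_{j} |\bar X_{n,j}| \lesssim K_1 \sqrt{\log p/n}$, hence $\sup_{\balpha \in \mathcal{F}} |\lea \balpha, \bar{\bX}_n\ria| \lesssim K_1 \sqrt{s\log p/n} \lesssim K_1 \sqrt{c_n(s,p)/n}$. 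For the denominator ratio, Lemma~\ref{lem7.2} with $t \asymp c_n(s,p)$ gives both $D_n \lesssim K_1^2\sqrt{c_n(s,p)/n}$ and $|\hat\sigma_\varepsilon^2 - 1| \lesssim K_0^2 \sqrt{c_n(s,p)/n}$. Under the sample-size assumption $n \geq C_1(K_0\vee K_1)^4 c_n(s,p)$, both expressions are at most $1/2$, which converts (via $|\sqrt{1+d}-1|\leq |d|$ for $|d|\leq 1/2$) to $\sup_{\balpha \in \mathcal{F}} |D(\balpha)/D_0(\balpha) - 1| \lesssim (K_0\vee K_1)^2\sqrt{c_n(s,p)/n}$ along with the two-sided control $D(\balpha)/D_0(\balpha) \in [1/2, 3/2]$ that keeps the denominators away from zero.

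Next I upper bound $L_n = \sup_{\balpha \in \mathcal{F}}|N_0(\balpha)/D_0(\balpha)|$ so that the second term of the decomposition can be controlled. The summands $\varepsilon_i\lea \balpha_{\bSigma}, \bX_i\ria$ have $\psi_1$-norm uniformly bounded by $K_0 K_1$, and an $\epsilon$-net argument on $\mathcal{F}$ of cardinality $\lesssim \{(\gamma_s p/s)\}^{s}$ together with Adamczak/Bernstein-type tails---exactly in the spirit of the proof of Lemma~\ref{lem7.2}---gives $L_n \lesssim K_0 K_1\sqrt{c_n(s,p)}$ on a high-probability event. Assembling the four high-probability events above,
\begin{align*}
  |\hat L_n - L_n|
  \lesssim K_0 K_1 \sqrt{\frac{c_n(s,p)}{n}}
  + K_0 K_1 \sqrt{c_n(s,p)}\cdot (K_0\vee K_1)^2 \sqrt{\frac{c_n(s,p)}{n}},
\end{align*}
in which the second summand dominates and matches the claimed bound $(K_0\vee K_1)^2 K_0 K_1\, n^{-1/2} c_n(s,p)$.

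The main obstacle is bookkeeping the failure probability. The natural concentration tails produced by Lemmas~\ref{lem7.1} and \ref{lem7.2} are exponentially small, much sharper than the stated rate $C_2 n^{-1/2} c_n^{1/2}(s,p)$, so one has to select the tail parameter $t$ in each application so that the weakest contribution matches this rate; in particular, the $n^{-1/2}\sqrt{c_n(s,p)}$ arises most directly from a Chebyshev-type bound on the product $\sqrt{n}\,|\bar\varepsilon_n|\cdot \sup_{\balpha}|\lea \balpha, \bar{\bX}_n\ria|$, whose expectation factorizes by independence of $\beps$ and $\bX$. A secondary subtlety is ensuring the ratio identity is valid \emph{simultaneously} for every $\balpha \in \mathcal{F}$; this is handled by using the \emph{uniform} deviation $D_n$ from Lemma~\ref{lem7.2} rather than any pointwise estimate.
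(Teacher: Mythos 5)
Your decomposition is algebraically the same as the paper's: with $D(\balpha)/D_0(\balpha)=\hat\sigma_\varepsilon\hat\sigma_{\balpha}$, the two terms you produce are exactly the two terms in the paper's display following \eqref{eq7.37}, and the cancellation $N(\balpha)-N_0(\balpha)=-\sqrt{n}\,\bar\varepsilon_n\lea\balpha,\bar{\bX}_n\ria$ matches. The uniform control of $\|\balpha\|_n/|\balpha|_{\bSigma}$ via $D_n$ from Lemma~\ref{lem7.2}, the bound $L_n\lesssim K_0K_1\sqrt{c_n(s,p)}$ via Adamczak plus a net, and the dominance of the second term in the final assembly all line up with the paper. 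However, two steps as written would not deliver the lemma as stated.

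First, the coordinate-wise union bound gives $\sup_{\balpha\in\mathcal{F}}|\lea\balpha,\bar{\bX}_n\ria|\lesssim K_1\sqrt{s\log p/n}$, but the quantity that actually enters your first term is $\sup_{\balpha\in\mathcal{F}}|\lea\balpha,\bar{\bX}_n\ria|/\|\balpha\|_n$, which on the high-probability event is comparable to $\sup_{\balpha\in\mathcal{F}}|\lea\balpha_{\bSigma},\bar{\bX}_n\ria|$. Dividing your $\ell_2$-normalized bound by $\|\balpha\|_n\gtrsim|\balpha|_{\bSigma}\geq\sqrt{\phi_{\min}(s)}$ injects a multiplicative $\phi_{\min}(s)^{-1/2}$ that the statement does not allow: in $c_n(s,p)$, the conditioning of $\bSigma$ enters only through $\log\gamma_s$. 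The paper's route (Adamczak's Theorem~4 applied directly to the unit-variance functionals $\lea\balpha_{\bSigma},\cdot\ria$ over a $\gamma_s$-scaled $\epsilon$-net) keeps $\gamma_s$ strictly inside the logarithm; that is the point of working in the $\bSigma$-normalized coordinates throughout.

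Second, taking $t\asymp c_n(s,p)$ in Lemma~\ref{lem7.2} makes the second argument of the max in \eqref{eq7.3} equal to $c^2_n(s,p)/n$, which is $\lesssim\sqrt{c_n(s,p)/n}$ only when $n\gtrsim c_n^3(s,p)$ --- strictly stronger than the stated assumption $n\geq C_1(K_0\vee K_1)^4 c_n(s,p)$. The paper instead sets $t_2=\min\{\log n,\,\sqrt{n/c_n(s,p)}\}$: either branch forces $\max\{\sqrt{t_2/n},\,c_n(s,p)t_2/n\}\lesssim\sqrt{c_n(s,p)/n}$ under the stated $n\gtrsim c_n(s,p)$, while the failure probability $8e^{-t_2}$ then evaluates (using $\sup_{u}ue^{-u}\leq e^{-1}$) to exactly the order $n^{-1/2}c_n^{1/2}(s,p)$. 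The ``Chebyshev-type bound'' interpretation at the end is therefore off: the stated failure probability is the cost of controlling the uniform deviation $D_n$ with a sufficiently small bound, not a moment bound on the centering product.
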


Let $b_n(s,p) = \log (\gamma_s p /s ) \vee \log n$. Applying Lemmas~\ref{lem7.5} and \ref{lem7.6} with
$$
\de= \delta_n(s,p) =  ( K_0K_1)^{3/4} \min\big[ 1, n^{-1/8} \{s b_n(s,p)\}^{3/8}  \big]
$$
yields that, with probability at least $1- C  ( K_0K_1)^{3/4}  \, n^{-1/8} \{s b_n(s,p)\}^{7/8}$,
$$
	| \hat L_n -  T^* |  \lesssim    ( K_0K_1)^{3/4}  \, n^{-1/8}  \{s  b_n(s,p)\}^{3/8}.
$$
Together with the inequality \eqref{eq7.16}, this proves \eqref{eq3.3}.

Further, using \eqref{eq3.2}, \eqref{eq3.4} and the identity $\mathbf{v}^{{{\rm T}}} \mathbf{A}^{-1} \mathbf{v} = \max_{\balpha \in \bS^{s-1}}  \frac{  (\balpha^{{{\rm T}}} \mathbf{v})^2 }{ \balpha^{{{\rm T}}} \mathbf{A} \balpha }$ that holds for any $s\times s$ positive definite matrix $\mathbf{A}$, we find that with probability one,
\begin{align}
	R^*(s,p) = \max_{S\subseteq [p]: |S|=s} \max_{\balpha \in \bS^{s-1} } \frac{\balpha^{{{\rm T}}} \bZ_S}{\sqrt{\balpha^{{{\rm T}}} \bSigma_{SS} \balpha}} = \max_{S\subseteq [p]: |S|=s} \sqrt{\bZ_S^{{{\rm T}}} \bSigma_{SS}^{-1} \bZ_S } , \label{eq7.21}
\end{align}
where for each $S\subseteq [p]$ fixed, the second maximum over $\balpha$ is achieved when $\balpha=\bSigma_{SS}^{-1/2} \bZ_S / |  \bSigma_{SS}^{-1/2} \bZ_S  |_2$, as for each $p\geq 1$ fixed, all of the coordinates of $\bZ$ are non-zero almost surely. In particular, when $\bSigma=\bI_p$, the right-hand side of \eqref{eq7.21} is reduced to $ \max_{S\subseteq [p]: |S|=s} |\bZ_S|_2$ and therefore, $\{ R^*(s,p) \}^2 =  \max_{S\subseteq [p]: |S|=s} \sum_{j \in S} Z_j^2 = Z^2_{(p)} + \cdots + Z^2_{(p-s+1)}$ happens with probability one. This and \eqref{eq3.3} complete the proof of \eqref{eq3.5}.  \qed

\newpage

\def \sMB {\mbox{\scriptsize MB}}
\def \scMB {\mbox{\scriptsize CMB}}
\def \T {{\rm T}}

\begin{frontmatter}
\title{Supplement to ``Are Discoveries Spurious?  Distributions of Maximum Spurious Correlations and Their Applications''}
\runtitle{Supplementary material}

\end{frontmatter}

\appendix

\section{Proof of Lemmas~7.2--7.6}
\label{Appendix.A}

Here we prove Lemmas~7.2--7.6.

\subsection{Proof of Lemma~7.2}

For every $\balpha \in \mathcal{V}$, recall that $\balpha_{\bSigma}=\balpha/ |\balpha|_{\bSigma}$ with $|\balpha|_{\bSigma} =( \balpha^{{\rm T}} \bSigma \balpha )^{1/2}$. Then, we have
$$
  \frac{ \balpha^{{\rm T}} \hat \bSigma \balpha }{ \balpha^{{\rm T}} \bSigma \balpha  }   =   n^{-1} \sn  ( \balpha_{\bSigma}^{{\rm T}} \bX_i )^2      -( \balpha_{\bSigma}^{{\rm T}} \bar{\bX}_n )^2.
$$
In view of this identity, we define
$$
	D_{n,1} = \sup_{ \balpha \in \mathcal{V} } \bigg|  n^{-1} \sn ( \balpha_{\bSigma}^{{\rm T}} \bX_i )^2  -1  \bigg| , \quad D_{n,2} = \sup_{ \balpha \in \mathcal{V}}  ( \balpha_{\bSigma}^{{\rm T}} \bar{\bX}_n )^2,
$$
such that $D_n \leq D_{n,1} + D_{n,2}$. In what follows, we bound the two terms $D_{n,1}$ and $D_{n,2}$ respectively.

Let $\mathcal{G}$ be a class of functions $\bbr^p \mapsto \bbr$ given by $\mathcal{G} =  \{  \mx \mapsto g_{\balpha}(\mx) = \lea  \balpha_{\bSigma}, \mx  \ria^2 : \balpha \in \mathcal{V}  \}$, and denote by $\PP_\bX$ the probability measure on $\bbr^p$ induced by $\bX$. In this notation, we have $D_{n,1}  =  \sup_{g\in \mathcal{G}}  |  n^{-1} \sn  g(\bX_i) - \PP_\bX g    |$. To bound $D_{n,1}$, we follow a standard procedure: first we show concentration of $D_{n,1}$ around its expectation $\e D_{n,1}$, and then upper bound the expectation. To prove concentration, applying Theorem 4 in \cite{A08} implies that there exists an absolute constant $C>0$ such that, for every $t>0$,
\be
	D_{n,1} \leq 2   \e D_{n,1} +  \max\bigg\{ 2\sigma_{\mathcal{G}}  \frac{\s{t}}{n} ,  C   \frac{t}{n}  \bigg\| \max_{1\leq i\leq n} \sup_{g\in \mathcal{G}} |g(\bX_i)|  \bigg\|_{\psi_1}  \bigg\}  \label{eq7.5}
\ee
holds with probability at least $1-4 e^{-t}$, where $\sigma^2_{\mathcal{G}} := \sup_{g\in \mathcal{G}} \sn \PP_{\bX_i} g^2$. Under Condition~2.1, it follows from the fact $|\bSigma^{1/2}\balpha_{\bSigma} |_2=1$ and the definition of $\| \cdot \|_{\psi_2}$~that
\begin{align}
	 \sigma^2_{\mathcal{G}}\leq n   \sup_{\balpha \in \mathcal{V}} \e  \{  ( \bSigma^{1/2}  \balpha_{\bSigma})^{{\rm T}} \bU  \}^4 \leq  16    n     \sup_{\balpha \in \bS^{p-1}} \|   \balpha^{{\rm T}} \bU   \|_{\psi_2}^4 \leq 16   K_1^4 \,   n ,  \label{eq7.6}
\end{align}
which further leads to $\sigma_{\mathcal{G}} \leq 4 K_1^2 \s{n}$ for $K_1$ as in Condition~2.1. In the last term of \eqref{eq7.5}, note that
$$
\sup_{g\in \mathcal{G}} |g(\bX_i)| = \sup_{ \balpha \in \mathcal{V}}  (   \balpha_{\bSigma}^{{\rm T}} \bX_i )^2 =  \bigg( \sup_{\balpha \in \mathcal{V}}  \balpha_{\bSigma}^{{\rm T}} \bX_i   \bigg)^2.
$$
For every $\epsilon \in (0 ,  \gamma_s^{-1/2})$, a standard argument can be used to prove that there exists an $\epsilon$-net $\mathcal{N}_\epsilon$ of $\mathcal{V}$ such that $d_\epsilon = |\mathcal{N}_\epsilon| \leq   \{ (2+\epsilon) ep/( \epsilon s )  \}^s$ and
\be
	\sup_{\balpha \in \mathcal{V}}     \balpha_{\bSigma}^{{\rm T}} \bX_i   \leq  (1-   \gamma_s \epsilon)^{-1} \max_{\balpha \in \mathcal{N}_\epsilon}   \balpha_{\bSigma}^{{\rm T}} \bX_i    .  \label{eq7.7}
\ee
See, for example, the proof of \eqref{eq7.25} below. In particular, under Condition~2.1, using Lemma~2.2.2 in \cite{VW96} implies by taking $\epsilon_s =( 4 \gamma_s )^{-1}$ and $\mathcal{N}  = \mathcal{N}_{\epsilon_s}$ that
\begin{align}
	    \bigg\| \max_{1\leq i\leq n} \sup_{g\in \mathcal{G}} |g(\bX_i)| \bigg\|_{\psi_1}  \nn
	 &=\bigg\| \max_{1\leq i\leq n} \sup_{\balpha \in \mathcal{V}}  \{ ( \bSigma^{1/2}  \balpha_{\bSigma} )^{{\rm T}} \bU_i \}^2  \bigg\|_{\psi_1} \nn \\
	& \lesssim \bigg\| \max_{1\leq i\leq n}\max_{\balpha \in \mathcal{N}  }  \{ ( \bSigma^{1/2}  \balpha_{\bSigma})^{{\rm T}} \bU_i \}^2 \bigg\|_{\psi_1}  \nn \\
	& \lesssim  \{   s \log (\gamma_s ep /s )  \vee \log n \}   \sup_{\balpha \in \bS^{p-1}}  \|    \balpha^{{\rm T}} \bU_i  \|^2_{\psi_2} \nn \\
	& \lesssim  K_1^2 \, c_n(s,p) ,  \label{eq7.8}
\end{align}
where $c_n(s,p)  = s\log ( \gamma_s e p /s )  \vee \log n$. Consequently, combining \eqref{eq7.5}, \eqref{eq7.6} and \eqref{eq7.8} yields, with probability at least $1-4 e^{-t}$,
\be
	D_{n,1} \leq 2 \e D_{n,1} + C   K_1^2 \max\bigg\{   \sqrt{\frac{t}{n}} , c_n(s,p)  \frac{t}{n} \bigg\}. \label{eq7.9}
\ee

To bound the expectation $\e D_{n,1}$, we use a result that involves the generic chaining complexity, $\gamma_m(T, d)$, of a semi-metric space $(T, d)$. We refer to \cite{T05} for a systematic introduction. A tight upper bound for $\e D_{n,1}$ can be obtained by a direct application of Theorem~A in \cite{M10}. To this end, note that
$ \sup_{\balpha \in \mathcal{V}}  \|    \balpha_{\bSigma}^{{\rm T}} \bX_i  \|_{\psi_1} =  \sup_{\balpha \in \mathcal{V}}  \| ( \bSigma^{1/2}  \balpha_{\bSigma})^{{{\rm T}}}  \bU_i   \|_{\psi_1}    \leq   K_1 $
and for every $\balpha, \balpha' \in \mathcal{V}$,
$$
	 \| ( \balpha_{\bSigma}-  \balpha_{\bSigma}')^{{\rm T}} \bX_i   \|_{\psi_2} = \Big\|  \{ \bSigma^{1/2}( \balpha_{\bSigma} -  \balpha_{\bSigma}' ) \}^{{\rm T}} \bU_i    \Big\|_{\psi_2} \leq  K_1  |  \balpha_{\bSigma} -  \balpha'_{\bSigma} |_{\bSigma}.
$$
Successively, it follows from Theorem~A in \cite{M10} and Theorems~1.3.6, 2.1.1 in \cite{T05} that
\begin{align}
 \e D_{n,1}   \lesssim  K_1^2   \bigg\{ \f{\gamma_2(\mathcal{F},|\cdot |_{\bSigma})}{\s{n}}  + \f{\gamma^2_2(\mathcal{F},|\cdot |_{\bSigma})}{n} \bigg\}   \lesssim K_1^2  \bigg\{ \f{\mathcal{M}(s,p)}{\s{n}}  + \f{\mathcal{M}^2(s,p)}{n} \bigg\},
 \label{eq7.10}
\end{align}
where $\mathcal{M}(s,p) := \e  ( \sup_{\balpha \in \mathcal{V} }   \balpha_{\bSigma}^{{\rm T}}\bZ   )$ with $\bZ \stackrel{d}{=}  N(\mo,\bSigma)$. In addition, a similar argument to that leading to \eqref{eq7.7} can be used to show that
\begin{align}
	 \mathcal{M}(s,p) \leq \frac{4}{3}   \e \bigg(\max_{\balpha \in \mathcal{N} }  \balpha_{\bSigma}^{{\rm T}} \bZ   \bigg)  \leq 2 \s{ \log( |\mathcal{N}  | )}  \lesssim \sqrt{s\log ( \gamma_s e p /s ) }.  \label{eq7.11}
\end{align}

Next we study $D_{n,2}$. Observe that $\sqrt{ D_{n,2}} =\sup_{\balpha \in \mathcal{V}} |  n^{-1} \sn   \balpha_{\bSigma}^{{\rm T}} \bX_i   |$. Again, we use a concentration inequality due to \cite{A08}. Theorem~4 there implies that, for every $t\geq 0$,
\be
	\sqrt{ D_{n,2}} \leq 2 \e \sqrt{ D_{n,2}} + \max\bigg\{ 2 \sigma_{\mathcal{V}}  \frac{\sqrt{t}}{n} ,  C  \frac{t}{n}  \bigg\|  \max_{1\leq i\leq n}\sup_{\balpha \in \mathcal{V}} |    \balpha_{\bSigma}^{{\rm T}} \bX_i  | \bigg\|_{\psi_1} \bigg\}  \label{eq7.12}
\ee
with probability at least $1-4  e^{-t}$, where
$\sigma_{\mathcal{V}}^2 = \sup_{\balpha \in \mathcal{V}} \sn \e   ( \balpha_{\bSigma}^{{\rm T}} \bX_i  )^2=n$. Under Condition~2.1, $ \sup_{\balpha \in \mathcal{V}} \|  \balpha_{\bSigma}^{{\rm T}} \bX_i   \|_{\psi_1}  \leq  \sup_{\balpha \in \mathcal{V}} \|  \balpha_{\bSigma}^{{\rm T}} \bX_i  \|_{\psi_2}  \leq K_1$. Recall that $\epsilon_s =(4\gamma_s)^{-1}$, a similar argument to that leading to \eqref{eq7.8} gives
\begin{align}
	   \bigg\|  \max_{1\leq i\leq n}\sup_{\balpha \in \mathcal{V}} |   \balpha_{\bSigma}^{{\rm T}} \bX_i   |  \bigg\|_{\psi_1} &   \lesssim \bigg\|  \max_{1\leq i\leq n}\sup_{\balpha \in \mathcal{N}  } \big|  ( \bSigma^{1/2}  \balpha_{\bSigma} )^{{\rm T}}  \bU_i    \big|  \bigg\|_{\psi_2}  \nn \\
	& \lesssim c^{1/2}_n(s,p)  \sup_{\balpha \in \bS^{p-1}}  \|    \balpha^{{\rm T}} \bU_i  \|_{\psi_2}    \lesssim   K_1 \, c^{1/2}_n(s,p).   \label{eq7.13}
\end{align}
For the expectation $\e \sqrt{D_{n,2}}$, it follows from \eqref{eq7.7} with $\epsilon_s=( 4\gamma_s)^{-1}$ that
$$
 \e \sqrt{D_{n,2}}   = \e \bigg(  \sup_{\balpha \in \mathcal{V}}  \balpha_{\bSigma}^{{\rm T}} \bar{\bX}_n    \bigg) \leq \frac{4}{3}   \e \bigg( \max_{\balpha \in \mathcal{N}  }    \balpha_{\bSigma}^{{\rm T}} \bar{\bX}_n   \bigg).
$$
For $\balpha \in \mathcal{V}$ and $t>0$, a direct consequence of (7.2) is that $\PP( |  \balpha_{\bSigma}^{{{\rm T}}}  \bar{\bX}_n |    \geq t  )  \leq 2 \exp(  - c_{{\rm H}} n t^2 / K_1^2 )$. This, together with Lemma~7.4 and the previous display implies
\be
	\e \sqrt{D_{n,2}} \lesssim   K_1   \sqrt{ (s/n) \log( \gamma_s e p /s ) } . \label{eq7.14}
\ee

Together, \eqref{eq7.9}--\eqref{eq7.14} completes the proof of (7.3).

Finally, to prove (7.4), note that $|\hat{\sigma}_{\varepsilon}^2-1| \leq |n^{-1}\sn \varepsilon_i^2-1| + \bar{\varepsilon}_n^2$. For $t_1, t_2 \geq 0$, applying (7.2) and (7.1) gives $\PP(  | \bar{\varepsilon}_n    | \geq t_1 ) \leq 2\exp( - c_{{\rm H}}  n t_1^2/ K_0^2 )$ and $\PP( |  n^{-1}\sn \varepsilon_i^2   -  1  | \geq t_2  ) \leq 2  \exp\{ - c_{{\rm B}}  n \min( t_2^2/A^2 , t_2/A ) \}$, respectively, where $A=\|\varepsilon^2-1 \|_{\psi_1 }\leq 2 \| \varepsilon^2 \|_{\psi_1 } \leq 4\| \varepsilon  \|_{\psi_2}^2=4 K_0^2$. Consequently, taking $t_1=K_0\, n^{-1/2} \sqrt{t}$ and $t_2 = 4K_0^2 \max( n^{-1/2}\sqrt{t}, n^{-1} t  )$ proves (7.4). \qed

\subsection{Proof of Lemma~7.3}

By (2.5), we have $R^*(s,p)=\sup_{\balpha\in \mathcal{V}}    \balpha_{\bSigma}^{{\rm T}} \bZ  $ and for every $\balpha \in \mathcal{V}$, $\e (  \balpha_{\bSigma}^{{\rm T}} \bZ  ) =0$ and $\e ( \balpha_{\bSigma}^{{\rm T}} \bZ )^2 =1$. Consequently, in view of \eqref{eq7.11}, inequalities (7.5) and (7.6) follow from Borell's inequality [Proposition~A.2.1 in \cite{VW96}] and Lemma~A.1 of \cite{CCK14a}, respectively. \qed

\subsection{Proof of Lemma~7.4}

Put $B=\max_{1\leq j\leq m} b_j$. For any $T>0$, we have
\begin{align*}
	\e \bigg( \max_{1\leq j\leq m} |X_j| \bigg) &= \int_0^\infty \P\bigg( \max_{1\leq j\leq p} |X_j| > t \bigg) \, dt \\
	& \leq T   + \int_{T }^\infty \P\bigg( \max_{1\leq j\leq m} |X_j| > t \bigg) \, dt  \\
	&  \leq T  + a \sum_{j=1}^m \s{b_j}  \cdot \int_{T/\s{b_j} }^\infty \exp( -t^2 /2 ) \, dt    \\
	& \leq T   +  a  \s{B}  m  \min\big( \s{\pi/2}, \s{B}/T \big)  \exp\{ -T^2/(2 B)\}.
\end{align*}
In particular, this implies by taking $T=\s{2B \log(a m)} \geq \s{2 B}$ that
\begin{align*}
\e \bigg( \max_{1\leq j\leq m} |X_j| \bigg)  & \leq \s{B}   \big[  \s{2 \log(a m)} + \{2\log(a m)\}^{-1/2} \big]  \\
& \leq \bigg( \s{2} + \f{1}{\s{2}\log 4} \bigg)   \s{B \log(a m)}.
\end{align*}

A completely analogous argument will lead to the desired bound under the condition that $\P(|X_j|\geq t)\leq a  \exp(-t/c_j)$ for all $t>0$ and $j=1,\ldots, m$. \qed

\subsection{Proof of Lemma~7.5}
\label{AppE.4}

Recall that $\bZ \sta{d}{=} N(\mo, \bSigma)$ and write $\bW_n =n^{-1/2}\sn \by_i$ with $\by_i=\varepsilon_i \bX_i$, such that $L_{n} = \sup_{\balpha \in \mathcal{V}} \lea  \balpha_{\bSigma}, \bW_n \ria$ for $L_n$ as in (7.8).

To prove (7.9), a new coupling inequality for maxima of sums of random vectors in \cite{CCK14a} plays an important role in our analysis. We divide the proof into three steps. First we discretize the index space $\mathcal{V}=\mathcal{V}(s,p)$ using a net, $\mathcal{V}_\varepsilon$, via a standard covering argument. Then we apply the aforementioned coupling inequality to the discretized process, and finish the proof based on the concentration and anti-concentration inequalities for Gaussian processes.

\medskip
\noi
{\it \textbf{Step~1: Discretization.}}
The goal is to establish \eqref{eq7.25}, which  approximates the supremum over an infinite index space $\mathcal{V}$ by the maximum over its $\epsilon$-net $\mathcal{V}_\epsilon$.

Let $\bbr^p$ be equipped with the Euclidean metric $\rho( \bx, \by) = |\bx- \by|_2$ for $\bx, \by \in \bbr^p$. Subsequently, the induced metric on the space of all linear functions $\mx \mapsto f_{\balpha}(\mx)=\lea \balpha, \mx  \ria$ is defined as $\rho( f_{\balpha} , f_{\bbeta}  ) = \sup_{ \bx \in \bS^{p-1}} | f_{\balpha}( \bx ) - f_{\bbeta} (\bx)| = \sup_{ \bx \in \bS^{p-1}} |\lea \balpha - \bbeta, \bx \ria |  = |\balpha -\bbeta |_2$. For every $\epsilon \in (0,1)$, denote by $N(\mathcal{V}, \rho, \epsilon)$ the $\epsilon$-covering number of $(\mathcal{V},\rho)$. For the unit Euclidean sphere $\bS^{p-1}$ equipped with the Euclidean metric $\rho$, it is well-known that $N(\bS^{p-1}, \rho, \epsilon) \leq (1+ 2/ \epsilon )^p$. Together with the decomposition
\be
	\big\{ \balpha \in \bS^{p-1}: |\balpha|_0=s \big\} = \bigcup_{S \subseteq [p] : |S|=s} \big\{ \balpha \in \bS^{p-1}: \mbox{ supp}(\balpha) = S \big\}  \label{eq7.22}
\ee
and the binomial coefficient bound ${p \choose s} \leq  (ep/s)^s$, this yields
\be
	 N(\mathcal{V}, \rho, \epsilon ) \leq {p \choose s} ( 1+ 2/ \epsilon )^s \leq \{  (2+\epsilon)e p / ( \epsilon s) \}^s .  \label{eq7.23}
\ee

For $\epsilon \in (0,1) $ and $S\subseteq [p]$ fixed, let $\mathcal{N}_{S,\epsilon}$ be an $\epsilon$-net of the unit ball in $(\bbr^S, \rho)$ with $|\mathcal{N}_{S,\epsilon}|\leq  ( 1+ 2/\epsilon )^s$. Thus the function class $\mathcal{N}_{\epsilon} : = \cup_{S\subseteq [p]} \mathcal{N}_{S,\epsilon} =\{ \mx \mapsto \lea \balpha, \mx \ria:  \balpha \in \mathcal{N}_{S,\epsilon} , S\subseteq [p] \}$ forms an $\epsilon$-net of $(\mathcal{V}, \rho)$. Denote by $d = d_{\epsilon} = |\mathcal{N}_\epsilon|$ the cardinality of $\mathcal{N}_\epsilon$. Then it is easy to see that $d \leq {p\choose s}  (1+ 2/ \epsilon )^s$ and hence $\log d \lesssim s \log \{  e p/ (\epsilon s)\}$.

For every $\balpha \in \mathcal{V}$ with supp$(\balpha)=S$, there exists some $\balpha' \in \mathcal{N}_{S,\epsilon}$ satisfying that supp$(\balpha')= {\rm supp}(\balpha)$ and $|\balpha -  \balpha'|_2\leq \epsilon$. Further, note that
\begin{align}
	 |  \balpha_{\bSigma} -  \balpha_{\bSigma}' |^2_{\bSigma}
	& = 2- 2 \f{\lea \balpha  , \bSigma \balpha' \ria }{|\balpha |_{\bSigma} |\balpha'|_{\bSigma}} \nn \\
 & =\f{\lea \balpha  - \balpha' , \bSigma (\balpha  -\balpha') \ria - (|\balpha |_{\bSigma}-|\balpha'|_{\bSigma})^2}{|\balpha |_{\bSigma} |\balpha'|_{\bSigma}} \leq \gamma_s^2 |\balpha - \balpha'|_2^2,  \label{eq7.24}
\end{align}
from which we obtain
\begin{align*}
	  \lea  \balpha_{\bSigma}, \bW_n \ria & = \lea  \balpha_{\bSigma} - \balpha_{\bSigma}', \bW_n \ria + \lea  \balpha_{\bSigma}' , \bW_n \ria   \\
	& = | \balpha_{\bSigma} - \balpha_{\bSigma}'|_{\bSigma}  \f{ \lea ( \balpha_{\bSigma} - \balpha_{\bSigma}')/| \balpha_{\bSigma} - \balpha_{\bSigma}'|_2 , \bW_n \ria}{|( \balpha_{\bSigma} - \balpha_{\bSigma}')/| \balpha_{\bSigma} - \balpha_{\bSigma}'|_2 |_{\bSigma}}  + \lea  \balpha_{\bSigma}' , \bW_n \ria \nn \\
 & \leq \gamma_s \epsilon   \sup_{\balpha \in \mathcal{V}} \lea  \balpha_{\bSigma} , \bW_n \ria+\lea  \balpha_{\bSigma}', \bW_n\ria
\end{align*}
for $\gamma_s$ as in (2.5), and hence
$$
	\sup_{\balpha\in \bS^{p-1}:\, {\rm supp}(\balpha)=S} \lea  \balpha_{\bSigma}, \bW_n \ria   \leq \gamma_s \epsilon   \sup_{\balpha \in \mathcal{V}}\lea  \balpha_{\bSigma} , \bW_n \ria + \max_{\balpha \in \mathcal{N}_{S,\epsilon}} \lea  \balpha_{\bSigma}, \bW_n \ria.
$$
Taking maximum over $S\subseteq [p]$ with $|S|=s$ on both sides yields
 $$
	\sup_{\balpha \in \mathcal{V}}\lea  \balpha_{\bSigma}, \bW_n \ria  \leq  \gamma_s \epsilon  \sup_{\balpha \in \mathcal{V}}\lea  \balpha_{\bSigma}, \bW_n \ria  + \max_{\balpha \in \mathcal{N}_\epsilon} \lea  \balpha_{\bSigma}, \bW_n \ria.
$$
Therefore, as long as $\epsilon \in ( 0 , \gamma_s^{-1} )$,
\be
	\max_{\balpha \in \mathcal{N}_{\epsilon}} \lea  \balpha_{\bSigma}, \bW_n \ria
 \leq  L_n  \leq (1-\gamma_s \epsilon)^{-1}   \max_{\balpha \in \mathcal{N}_{\epsilon}} \lea  \balpha_{\bSigma}, \bW_n \ria. \label{eq7.25}
\ee

\medskip
\noi
{\it \textbf{Step~2: Coupling.}}  This aims to carry the Gaussian approximation over the discrete index set $\mathcal{V}_{\epsilon}$ and to establish \eqref{eq7.26}
or its more explicit bound \eqref{eq7.31}.

Write $\mathcal{V}_{\epsilon}=\{ \balpha_j: j=1,\ldots, d\}$ and let $\bV_1,\ldots,\bV_n$ be i.i.d. $d$-variate random vectors such that $\bV_i=(V_{i1},\ldots, V_{id})^{{{\rm T}}}$, where $V_{ij}= \balpha_{j, \bSigma}^{{\rm T}}  \,\by_i  $ satisfies that $\e ( V_{ij} )=0$ and $\e ( V_{ij}^2 )=1$. Define the $d$-variate Gaussian random vector $ \bG = ( G_1 ,\ldots, G_d )^{{{\rm T}}}$, where $
	  G_j=   \balpha_{j, \bSigma}^{{\rm T}}  \bZ  =   \balpha_j^{{\rm T}} \bZ /|\balpha_j|_{\bSigma} $ for $j=1,\ldots, d$. Note that, for each $1\leq j\neq \ell \leq d$, $\e (V_{1j}V_{1\ell})= \e (G_{j} G_{\ell})$. By Corollary~4.1 of \cite{CCK14a}, there exists a random variable $T^*_\epsilon  \stackrel{d}{=} \max_{1\leq j\leq d} G_j$ such that, for every $\de>0$,
\begin{align}
	& \P\bigg( \bigg| \max_{1\leq j\leq d} n^{-1/2} \sn V_{ij} - T^*_\epsilon \bigg| \geq 16 \de \bigg) \nn \\
	 \lesssim  &  \,  B_1 \frac{  \log(d  n)}{\de^2  n}  +  B_2 \frac{\{ \log(d n)\}^2}{\de^3 n^{3/2}}  + B_3 \frac{  \{ \log(d n)\}^3 }{\de^{4}   n^2}  + \frac{\log n}{n}   ,   \label{eq7.26}
\end{align}
where
\begin{align}
	B_1 & = \e \bigg\{ \max_{1\leq  j,\ell \leq d} \bigg| \sn \big( V_{ij}V_{i\ell}-  \e V_{ij}V_{i\ell} \big) \bigg| \bigg\},  \nn \\
	B_2 & = \e \bigg( \max_{1\leq j\leq d} \sn   | V_{ij}|^3 \bigg), \ \ B_3 = \sn \e \bigg( \max_{1\leq j\leq d} V_{ij}^4 \bigg).  \nn 
\end{align}
In what follows, we bound the three terms $B_1$--$B_3$ respectively.

First, by Lemma~2.2.2 in \cite{VW96} we have
\begin{align*}
	 \e \bigg( \max_{1\leq j\leq d} V_{ij}^4 \bigg) &= \e ( \varepsilon_i^4 ) \cdot \e \bigg\{ \max_{1\leq j\leq d} ( \balpha_{ j, \bSigma}^{{\rm T}} \bX_i  )^4  \bigg\}   \\
	  &  \leq 16 \e (\varepsilon_i^4 )  \cdot  \bigg\| \max_{1\leq j\leq d}  ( \bSigma^{1/2}  \balpha_{ j, \bSigma}  )^{{\rm T}} \bU_i  \bigg\|_{\psi_2}^4  \\
	  &  \lesssim v_4  (\log d)^2 \cdot  \sup_{\balpha \in \bS^{p-1}} \|   \balpha^{{\rm T}} \bU_i  \|_{\psi_2}^4   \nn \\
	  & \lesssim v_4  K_1^4 (\log d)^2
\end{align*}
for $v_4=\e (\varepsilon^4)$ as in Condition~2.1, leading to
\be
   B_3 \lesssim  v_4 K_1^4  \,  n  [  s \log\{ ep /( \epsilon s )\}]^2 .   \label{eq7.27}
\ee

For $B_2$, we apply Lemma~9 in \cite{CCK14b} to obtain
\begin{align*}
	B_2 \lesssim  \max_{1\leq j\leq d} \sn \e |V_{ij}|^3 + (\log d) \cdot \e \bigg( \max_{1\leq i\leq n}\max_{1\leq j\leq d}|V_{ij}|^3 \bigg)  .
\end{align*}
For every integer $q \geq 1$, by the definition of the $\|\cdot \|_{\psi_2}$ norm we have
\begin{align*}
	\e |V_{ij}|^q  = v_q \e \big|	 ( \bSigma^{1/2}  \balpha_{j, \bSigma} )^{{\rm T}} \bU_i   \big|^q \leq  q^{q/2} v_q  \big\| ( \bSigma^{1/2}  \balpha_{j, \bSigma} )^{{\rm T}} \bU_i    \big\|^q_{\psi_2}  \leq  q^{q/2}  v_q K_1^q,
\end{align*}
and once again, it follows from Lemma~2.2.2 in \cite{VW96} that
\begin{align*}
	 \e \bigg( \max_{1\leq i\leq n}\max_{1\leq j\leq d}|V_{ij}|^q \bigg) & \leq q^{q}   \bigg\| \max_{1\leq i\leq n}\max_{1\leq j\leq d} |\varepsilon_i|  \cdot |  \balpha_{j, \bSigma}^{{\rm T}} \bX_i   | \bigg\|_{\psi_1}^q \\
	&  \lesssim q^q \{ \log(d n)\}^q \max_{1\leq j\leq d} \Big\|  \varepsilon \cdot ( \bSigma^{1/2} \balpha_{j, \bSigma} )^{{\rm T}} \bU    \Big\|_{\psi_1}^q   \\
	& \lesssim q^q \{ \log(dn)\}^q    \|  \varepsilon \|_{\psi_2}^q    \max_{1\leq j\leq d} \Big\|  ( \bSigma^{1/2}  \balpha_{j, \bSigma} )^{{\rm T}} \bU   \Big\|_{\psi_2}^q \\
	& \lesssim 	q^q (K_0 K_1)^q \{ \log(d n) \}^q.
\end{align*}
The last three displays together imply by taking $q=3$ that
\be
	B_2 \lesssim v_3 K_1^3 \, n   +  (K_0 K_1)^3   [  s \log\{ ep /( \epsilon s )\}]^4 .   \label{eq7.28}
\ee

Turning to $B_1$, a direct consequence of Lemma~1 in \cite{CCK14b} is that
\begin{align}
	B_1&  \lesssim  (\log d)^{1/2}  \max_{1\leq j\leq d} \bigg( \sn \e V_{ij}^4 \bigg)^{1/2} +  ( \log d )  \cdot \bigg( \e \max_{1\leq i\leq n, 1\leq j\leq d} V_{ij}^4 \bigg)^{1/2}   \nn \\
	& \lesssim  v_4^{1/2} K_1^2  [ n s \log\{ ep / (\epsilon s ) \} ]^{1/2}   + (K_0 K_1)^2  [  s \log\{ ep/( \epsilon s )\}  \vee \log n ]^3 .     \label{eq7.29}
\end{align}

Putting \eqref{eq7.26}--\eqref{eq7.29} together, we obtain that for every $\de>0$ and $\epsilon \in (0,1)$,
\begin{align}
	& \P\bigg( \bigg| \max_{\balpha \in \mathcal{N}_{\epsilon}}   \balpha_{\bSigma}^{{\rm T}} \bW_n  - T^*_\epsilon \bigg| \geq 16   \de  \bigg) \nn \\
	& \lesssim v_4^{1/2}   K_1^2  \f{c^{3/2}_{n}(s,p,\epsilon)}{\de^2  \s{n}} + v_3  K_1^3   \f{c^2_{n}(s,p,\epsilon)}{\de^3  \s{n}} +  v_4 K_1^4 \f{c^5_{n }(s,p,\epsilon)}{ \de^4  n}  \nn \\
	& \qquad +   (K_0 K_1)^2   \f{c^4_{n }(s,p,\epsilon)}{\de^2 n } +   (K_0 K_1)^3 \f{c^6_{n }(s,p,\epsilon)}{\de^3  n^{3/2}}   +\f{\log n}{n},    \label{eq7.30}
\end{align}
where $c_{n }(s,p,\epsilon) := s \log \{ ep/( \epsilon s )\} \vee \log n$. Because this upper bound is only meaningful when it is less than 1, it can be further reduced to
\begin{align}
   b_{n }(s,p,\epsilon ,\delta)  : =  (K_0 K_1)^3 \f{c^2_{n }(s,p,\epsilon) }{\de^3 \s{n}} +  (K_0 K_1)^4  \f{c^5_{n }(s,p,\epsilon) }{ \de^4 n}   \label{eq7.31}
\end{align}
for $\epsilon \in (0,1)$ and $\de \in (0,  K_0 K_1]$.

\medskip
\noi
{\it \textbf{Step~3.}} For every $\epsilon \in (0,1)$, put $\epsilon_s=\gamma_s \epsilon$. A similar argument to that leading to \eqref{eq7.25} now gives
$$
	\max_{\balpha \in \mathcal{N}_\epsilon}    \balpha_{\bSigma}^{{\rm T}}  \bZ   \leq \sup_{\balpha \in \mathcal{V}}   \balpha_{\bSigma}^{{\rm T}}  \bZ    \leq \epsilon_s  \sup_{\balpha \in \mathcal{V}}  \balpha_{\bSigma}^{{\rm T}}  \bZ    + \max_{\balpha \in \mathcal{N}_\epsilon}    \balpha_{\bSigma}^{{\rm T}}  \bZ  .
$$
Further, it is concluded from (7.5) and \eqref{eq7.41} in the proof of Lemma~7.6 that, with probability at least $1-C n^{-1}$,
\begin{align}
	\bigg| \sup_{\balpha \in \mathcal{V}}  \balpha_{\bSigma}^{{\rm T}}  \bZ  - \max_{\balpha \in \mathcal{N}_\epsilon }    \balpha_{\bSigma}^{{\rm T}}  \bZ  \bigg|  & \lesssim   c^{1/2}_n(s,p)   \epsilon_s \label{eq7.32}
\end{align}
with $c_n(s,p) = s\log(\gamma_s  e p/s) \vee \log n$, and
\begin{align}
	&   \bigg| \sup_{\balpha \in \mathcal{V}}   \balpha_{\bSigma}^{{\rm T}}  \bW_n   - \max_{\balpha \in \mathcal{N}_\epsilon }    \balpha_{\bSigma}^{{\rm T}}  \bW_n  \bigg|  \nn \\
	&  \lesssim      \big\{ c^{1/2}_n(s,p) + K_0 K_1 \,  n^{-1/2} c^2_n(s,p) \big\} \epsilon_s .  \label{eq7.33}
\end{align}
For the Gaussian maxima $ \sup_{\balpha \in \mathcal{V}}\lea  \balpha_{\bSigma}, \bZ \ria $ and $ \max_{\balpha \in \mathcal{N}_\epsilon}\lea  \balpha_{\bSigma}, \bZ \ria $, it follows from \eqref{eq7.32} that for any Borel subset $\mathcal{B}$ of $\bbr$,
\beq
	\P\big(  T^*_\epsilon \in \mathcal{B} \big)
	\leq \P\bigg\{ \sup_{\balpha \in \mathcal{V} }  \balpha_{\bSigma}^{{\rm T}}  \bZ  \in \mathcal{B}^{C  \epsilon_s  c^{1/2}_n(s,p)   }\bigg\} +  n^{-1} ,
\eeq
where $\mathcal{B}^{u}:= \{x\in \bbr: |x-y|\leq u ,  \forall y \in \mathcal{B}\}$ for $u>0$. This, together with Lemma~4.1 in \cite{CCK14a}, a variant of Strassen's theorem, implies that there exits a random variable $T^*  \stackrel{d}{=} \sup_{\balpha\in \mathcal{V}}\lea  \balpha_{\bSigma}, \bZ\ria$ such that
\be
	\P\big\{  \big| T^* - T^*_\epsilon \big| > C \epsilon_s  c^{1/2}_n(s,p)   \big\} \leq n^{-1} . \label{eq7.34}
\ee

Finally, assembling \eqref{eq7.30}--\eqref{eq7.33} completes the proof of (7.9) by taking $\epsilon= (\gamma_s n)^{-1}$.   \qed

\subsection{Proof of Lemma~7.6}

Let $\mS_n = n^{-1}\sn \bX_i \bX_i^{{{\rm T}}}$ and write
\be
	D_n    = \sup_{\balpha \in \mathcal{V}} \big|  \balpha^{{{\rm T}}}_{\bSigma} \hat \bSigma_n  \balpha_{\bSigma}   -1 \big| = \sup_{\balpha \in \mathcal{V}} \Big|  \balpha_{\bSigma}^{{{\rm T}}}  \mS_n  \balpha_{\bSigma} -1 - \big( \balpha_{\bSigma}^{{{\rm T}}} \bar{\bX}_n \big)^2 \Big| .  \label{eq7.35}
\ee
For ease of exposition, define $\hat \sigma_\varepsilon^2= n^{-1}\sn (\varepsilon_i - \bar{\varepsilon}_n )^2$ and for $\balpha \in \bbr^p$, let
\be
	 \hat \sigma_{\balpha}^2 = \hat{\sigma}_{\balpha}^2(\bSigma)= n^{-1} \sn (  \balpha_{\bSigma}^{{\rm T}} \bX_i )^2 -  (  \balpha_{\bSigma}^{{\rm T}}  \bar{\bX}_n )^2 .    \label{eq7.36}
\ee
In this notation, $D_n=\sup_{\balpha \in \mathcal{V}}|\hat \sigma_{  \balpha}^2 - 1 |$ and
$$
	\hat  L_n = \sup_{\balpha \in \mathcal{V}} \, ( \hat \sigma_\varepsilon \hat \sigma_{ \balpha}  )^{-1} \bigg( n^{-1/2}\sn  \varepsilon_i \cdot  \balpha_{\bSigma}^{{\rm T}} \bX_i   - \sqrt{n} \, \bar{\varepsilon}_n \balpha_{\bSigma}^{{{\rm T}}} \bar{\bX}_n  \bigg) .
$$
Comparing this with $L_n$ in (7.8), it is easy to see that
\be
	 | \hat L_n - L_n | \leq \sup_{\balpha \in \mathcal{V}}\big|  (\hat \sigma_\varepsilon \hat \sigma_{ \balpha})^{-1} -1 \big| \cdot  L_n  +  \sqrt{n} \, \hat \sigma_\varepsilon^{-1}|\bar{\varepsilon}_n |  \cdot  \sup_{\balpha \in \mathcal{V}} \hat \sigma_{ \balpha}^{-1} |\balpha_{\bSigma}^{{{\rm T}}} \bar{\bX}_n | .  \label{eq7.37}
\ee
In what follows, we bound the two terms on the right-hand side of \eqref{eq7.37} separately, starting with the first one.

For every $t>0$, let $\mathcal{E}_{\bX}(t)$ and $\mathcal{E}_{\varepsilon}(t)$ be the events that (7.3) and (7.4) hold, respectively. In particular, taking $t_1=A_1 \log n$ for $A_1>0$ and $t_2= \min [ \log n,  \{ n/c_n(s,p)\}^{1/2} ]$ yields $\P\{  \mathcal{E}_{\varepsilon}(t_1)^{{\rm c}}  \} \leq  2 n^{-c_{{\rm H}} A_1} + 2 n^{-c_{{\rm B}} A_1}$ and
$$
\P\big\{ \mathcal{E}_{\bX}(t_2)^{{\rm c}} \big\}  \leq 8 \exp\big[- \{ n/c_n(s,p)\}^{1/2} \big] \leq 3 n^{-1/2}  c^{1/2}_n(s,p)  ,
$$
where $c_{{\rm H}}, c_{{\rm B}}>0$ are as in Lemma~7.1. Here, the last step comes from the inequality $\sup_{t\geq 0} te^{-t} \leq e^{-1}$. On the event $\mathcal{E}_{\varepsilon}(t_1) \cap \mathcal{E}_{\bX}(t_2)$,
\begin{align}
	\big|  \hat \sigma_\varepsilon^2 -1 \big| \lesssim  K_0^2\sqrt{\frac{\log n}{n}} \leq \f{1}{2}, \quad  D_n  \lesssim    K_1^2 \sqrt{\frac{c_n(s,p)}{n}} \leq \f{1}{2 }   \label{eq7.38}
\end{align}
whenever the sample size $n$ satisfies $n \gtrsim  \max\{ K_0^4 \log n ,  K_1^4  c_n(s,p)  \}$. Together, \eqref{eq7.37}, \eqref{eq7.38} and the identity
\begin{align*}
	 &  1-   (\hat \sigma_\varepsilon \hat \sigma_{ \balpha})^{-1}  \\
	 &= (\hat \sigma_\varepsilon \hat \sigma_{ \balpha})^{-1}  \big\{ (\hat \sigma_\varepsilon -1) (\hat \sigma_{ \balpha}-1)+\hat \sigma_\varepsilon-1 + \hat \sigma_{\balpha} -1 \big\}   \\
	& = \frac{ (\hat \sigma_\varepsilon^2 -1) (\hat \sigma_{ \balpha}^2-1)+( \hat \sigma^2_\varepsilon-1 )(\hat \sigma_{\balpha} +1)+ (\hat \sigma_{\balpha}^2 -1)(\hat \sigma_{\varepsilon} +1)}{\hat \sigma_\varepsilon \hat \sigma_{ \balpha}  (\hat \sigma_\varepsilon +1)(\hat \sigma_{\balpha} +1)}
\end{align*}
imply, on $\mathcal{E}_{\varepsilon}(t_1) \cap \mathcal{E}_{\bX}(t_2)$ with $n$ sufficiently large,
\begin{align}
	\sup_{\balpha \in \mathcal{V}}\big|  (\hat \sigma_\varepsilon \hat \sigma_{ \balpha})^{-1} -1 \big|   \lesssim   (K_0 \vee K_1)^2 \,  n^{-1/2} c_n^{1/2}(s,p)  . \label{eq7.39}
\end{align}

Next we deal with $L_n$, which can be written as $\sup_{\balpha \in \mathcal{V}} n^{-1/2} \sn  \balpha_{\bSigma}^{{\rm T}} \by_i $, where $\by_i=\varepsilon_i \bX_i$ satisfies that, under Condition~2.1,
\be
	\e  ( \balpha_{\bSigma}^{{\rm T}}  \by_i )^2 =1 \, \mbox{ for all } \,   \balpha \in \mathcal{V}  \ \ \mbox{ and } \ \   \sup_{\balpha \in \mathcal{V}} \|   \balpha_{\bSigma}^{{\rm T}}  \by_i   \|_{\psi_1}  \leq 2 K_0 K_1.  \label{eq7.40}
\ee
As in the proof of \eqref{eq7.12}, using Theorem 4 in \cite{A08} gives, for any $t\geq 0$,
\be
	L_n \leq 2 \e L_n + \max\bigg\{ 2 \sqrt{t} ,  C  \frac{t}{\s{n}}  \bigg\|  \max_{1\leq i\leq n}\sup_{\balpha \in \mathcal{V}} |  \balpha_{\bSigma}^{{\rm T}} \by_i  | \bigg\|_{\psi_1} \bigg\}  \label{eq7.41}
\ee
holds with probability at least $1-4  e^{-t}$. For the last term of \eqref{eq7.41}, a similar argument to that leading to \eqref{eq7.12} gives, on this occasion with $\epsilon_s=(4\gamma_s)^{-1}$ and $\mathcal{N} = \mathcal{N}_{\epsilon_s}$ that
\begin{align*}
	 \bigg\|  \max_{1\leq i\leq n}\sup_{\balpha \in \mathcal{V}} |   \balpha_{\bSigma}^{{\rm T}} \by_i   |  \bigg\|_{\psi_1}
	&  = \bigg\|  \max_{1\leq i\leq n}\sup_{\balpha \in \mathcal{V}} \Big|  \varepsilon_i \cdot ( \bSigma^{1/2}  \balpha_{\bSigma})^{{\rm T}}  \bU_i  \Big|  \bigg\|_{\psi_1} \\
	&  \lesssim \bigg\|  \max_{1\leq i\leq n}\sup_{\balpha \in \mathcal{N} }\Big| \varepsilon_i \cdot  ( \bSigma^{1/2}  \balpha_{\bSigma} )^{{\rm T}}  \bU_i    \Big|  \bigg\|_{\psi_1} \\
	 &  \lesssim  \{   s \log( \gamma_s p /s ) + \log n  \}  \| \varepsilon \|_{\psi_2}   \sup_{\balpha \in \bS^{p-1}}  \|   \balpha^{{\rm T}} \bU_i    \|_{\psi_2} \\
	&    \lesssim  K_0 K_1 \, c_n(s,p) .
\end{align*}
Here, we used the property that the cardinality of the $\epsilon_s$-net $\mathcal{N}$ of $\mathcal{V}$, denoted by $d =|\mathcal{N} |$, is such that $\log d  \lesssim s\log ( \gamma_s e p/s)$.

To bound $\e L_n$, observe that for every $\bu \in\bbr^p$, $\sup_{\balpha\in \mathcal{V}}|   \balpha_{\bSigma}^{{\rm T}}  \bu  |= \sup_{\balpha\in \mathcal{V}}     \balpha_{\bSigma}^{{\rm T}} \bu $. For $\bW_n = n^{-1/2} \sn \by_i$, it follows from \eqref{eq7.25} with $\epsilon_s =( 4\gamma_s)^{-1}$ that
$$
 \e L_n    =  \e \bigg(  \sup_{\balpha \in \mathcal{V}}   \balpha_{\bSigma}^{{\rm T}} \bW_n   \bigg) \leq \frac{4}{3}  \e \bigg( \max_{\balpha \in \mathcal{N} }   \balpha_{\bSigma}^{{\rm T}} \bW_n   \bigg).
$$
For every $\balpha \in \mathcal{V}$ and $t>0$, from (7.1) and \eqref{eq7.40} we get
\begin{align}
	\P\big(   \balpha_{\bSigma}^{{\rm T}} \bW_n \geq  t \big) \leq 2\exp\bigg\{ -c_{{\rm B}} \min\bigg( \frac{t^2}{4K_0^2 K_1^2}, \frac{\sqrt{n} t}{2K_0 K_1} \bigg) \bigg\}.  \label{eq7.42}
\end{align}
Hence, applying Lemma~7.3 with slight modification gives
\begin{align*}
\e \bigg( \max_{\balpha \in \mathcal{N} }    \balpha_{\bSigma}^{{\rm T}} \bW_n   \bigg) \lesssim  K_0K_1 \s{ \log d } + K_0 K_1 n^{-1/2} \log d .
\end{align*}
Plugging this into \eqref{eq7.41} and taking $t=\min [ \log n, \{n/c_n(s,p)\}^{1/2}]$ imply, with probability at least $1-4 \exp[-\{n/c_n(s,p)\}^{1/2} ] \geq 1 -2 n^{-1/2} c_n^{1/2}(s,p)$,
\be
	 L_n  \lesssim  K_0 K_1 \,  c^{1/2}_n(s,p)  \label{eq7.43}
\ee
whenever the sample size $n$ satisfies $n\gtrsim c_{n}(s,p)$.

Again, on the event $\mathcal{E}_{\varepsilon}(t_1) \cap \mathcal{E}_{\bX}(t_2)$ with $n$ sufficiently large as above for \eqref{eq7.38}, the second term on the right-hand side of \eqref{eq7.37} is bounded by some multiple of $\sqrt{n}  \, \hat{\sigma}_{\varepsilon}^{-1} |\bar{\varepsilon}_n | \sqrt{D_{n,2}}$, where $D_{n,2}$ is as in \eqref{eq7.12}. Arguments similar to those in the proof of Lemma~7.2 permit us to show that, with probability at least $1 -2 n^{-1/2} c_n^{1/2}(s,p)$,
\be
  \sqrt{D_{n,2}}  \lesssim   K_1  \, n^{-1/2} c^{1/2}_n(s,p)  .  \label{eq7.44}
\ee
Further, put $S_n = \sn \varepsilon_i, V_n^2=\sn \varepsilon_i^2$, such that $\sqrt{n}\, \hat{\sigma}_{\varepsilon}^{-1}  \bar{\varepsilon}_n = V_n^{-1} S_n \{	1-n^{-1}(S_n/V_n)^2 \}^{1/2}  $. Then it follows from Theorems 2.16 and 2.19 in \cite{DLS09} that for every $t\in (0, \sqrt{n} ]$
\begin{align*}
 & 	\P \big( \sqrt{n}\, \hat{\sigma}_{\varepsilon}^{-1}  | \bar{\varepsilon}_n |  \geq t  \big) \\
 &  \leq \P\big\{   | S_n |  \geq t(1+t^2 n^{-1})^{-1/2}  V_n \big\} \\
	& \leq \P\big\{  | S_n |  \geq t(1+t^2 n^{-1})^{-1/2} (4\sqrt{2}+1)^{-1} ( 4\sqrt{n} + V_n ) \big\}  + \P\big(  V_n^2 \leq n/2 \big)  \\
	& \leq 4\exp(-c_{{\rm SN}}  t^2 ) + \exp\{ -n / (8v_4) \},
\end{align*}
where $v_4 = \e (\varepsilon^4)$ and $c_{{\rm SN}}>0$ is an absolute constant. In particular, taking $t=A_2 ( \log n )^{1/2} $ with $A_2>0$ yields, with probability greater than $1-4 n^{-c_{{\rm SN}}A_2^2}- \exp\{ -n / (8v_4) \}$,
\be
\sqrt{n}\, \hat{\sigma}_{\varepsilon}^{-1}  | \bar{\varepsilon}_n | \lesssim \sqrt{\log n} .  \label{eq7.45}
\ee

Finally, combing \eqref{eq7.37}, \eqref{eq7.39}, \eqref{eq7.43}, \eqref{eq7.44} and \eqref{eq7.45} completes the proof of (7.10).   \qed

\section{Proof of Theorem~3.2}

We divide the proof into three key steps.  The first step is to establish \eqref{eqA.1} using the results on discretization in the proof of Theorem~3.1, and then analyze separately the order of the stochastic terms \eqref{eqA.2} and \eqref{eqA.3}.

\medskip
\noi
{\it \textbf{Step~1.}} For any $\balpha \in \bbr^p$, let $ \| \balpha \|_n^2 = \balpha^{{{\rm T}}} \hat{\bSigma}  \balpha$ with $\hat{\bSigma} = \hat{\bSigma}_n$, and let $\gamma_s=\gamma_s(\bSigma)$ be as in (2.4). First, we prove that there exits a $(\gamma_s n)^{-1}$-net of $\mathcal{V}$, denoted by $\mathcal{V}_{n}=\mathcal{V}_n(s,p)$, such that $\log(|\mathcal{V}_n|)\lesssim s  b_n(s,p) $ and
\beqn
	& \sup_{t\geq 0} \Big| \P\Big( \sup_{\balpha \in \mathcal{V}} \lea  \balpha_{\bSigma}, \bZ  \ria \leq t \Big) -  \P \Big( \sup_{\balpha \in \mathcal{V} } \lea  \balpha_n, \bZ_n \ria \leq t \, \Big|  \mathcal{X}_n \Big) \Big| \nn \\
	& \qquad \qquad  \lesssim  \Big[  \hat\gamma_{s} (\gamma_s n)^{-1}   s \bar{b}_{n}(s,p)  +  	 \De_{n}^{1/3}   \big\{ s \bar{b}_{n}(s,p) + \log(1/ 	 \De_{n } ) \big\}^{2/3} \Big], \label{eqA.1}
\eeqn
where $\balpha_n=\balpha/\|\balpha\|_n$, $\mathcal{X}_n = \{ \bX_i\}_{i=1}^n$, $b_{n}(s,p)=\log ( \gamma_s p/s) \vee \log n$,  $\bar{b}_{n}(s,p)=\log(  \bar{ \gamma}_s p /s ) \vee \log n$ with $\bar{\gamma}_s=\max( \gamma_s, \hat\gamma_{s} )$, and
\be
	\hat\gamma_{s} := \gamma_{s}(\hat \bSigma)  =  \f{\max_{\bu\in \bS^{p-1} : 1\leq |\bu|_0\leq s } \| \bu \|_n }{\min_{\bu \in \bS^{p-1}: 1\leq |\bu|_0\leq s }\| \bu \|_n  }  \label{eqA.2}
\ee
denotes the $s$-sparse condition number of $\hat \bSigma$ and
\be
   	\De_{n} = \Delta_n(s,p) = \max_{\balpha, \bbeta \in \mathcal{V}_{n}} \left|   \balpha_{\bSigma}^{{{\rm T}}} \bSigma \bbeta_{\bSigma}   -    \balpha_n^{{{\rm T}}}   \hat \bSigma   \bbeta_n   \right|   \label{eqA.3}
\ee
with $\balpha_n=\balpha/\|\balpha\|_n$ and $\bbeta_n = \bbeta/\|\bbeta\|_n$. \\

\noi
{\it Proof of \eqref{eqA.1}}. As in the proof of Lemma~7.5 in Appendix~\ref{AppE.4}, for every $ \epsilon \in (0,1)$, there exists an $\epsilon$-net $\mathcal{N}_\epsilon$ of $\mathcal{V}$ satisfying $d_\epsilon=|\mathcal{N}_\epsilon|\leq  \{  (2+\epsilon) ep / ( \epsilon s  ) \}^s$, such that
\begin{align}
 \bigg| \sup_{\balpha \in \mathcal{V}} \lea  \balpha_{\bSigma}, \bZ \ria - \max_{\balpha\in \mathcal{N}_\epsilon}\lea    \balpha_{\bSigma}, \bZ \ria \bigg| &  \leq \gamma_s \epsilon    \sup_{\balpha \in \mathcal{V}}\lea \balpha_{\bSigma}, \bZ\ria, \label{eqA.4} \\
  \bigg| \sup_{\balpha \in \mathcal{V}} \lea  \balpha_n , \bZ_n \ria - \max_{\balpha\in \mathcal{N}_\epsilon}\lea    \balpha_n, \bZ_n \ria \bigg| &  \leq	 \hat\gamma_{s} \epsilon   \sup_{\balpha \in \mathcal{V}}\lea  \balpha_n, \bZ_n \ria.  \label{eqA.5}
\end{align}
For notational convenience, write $d=d_{\epsilon}$, $\mathcal{N}_\epsilon=\{\balpha_1, \ldots, \balpha_d\}$ and let
\begin{align*}
	\bG & = (G_1,\ldots, G_d)^{{{\rm T}}} = \left( \lea \balpha_{1,\bSigma}, \bZ\ria, \ldots, \lea \balpha_{d,\bSigma} , \bZ \ria  \right)^{{{\rm T}}}, \\
	  \bG_n & = (G_{n1}, \ldots, G_{nd})^{{{\rm T}}} = \left( \lea \balpha_{1,n}, \bZ_n \ria, \ldots, \lea \balpha_{d,n} , \bZ_n \ria  \right)^{{{\rm T}}}
\end{align*}
be two $d$-dimensional centered Gaussian random vectors. Conditional on $\mathcal{X}_n$, applying Theorem~2 in \cite{CCK14b} to $\bG$ and $\bG_n$ respectively gives
\begin{align*}
	  \sup_{t\in \bbr} \bigg| \P\bigg( \max_{1\leq j\leq d} G_j \leq t \bigg)  &- \P \bigg( \max_{1\leq j\leq d} G_{nj} \leq t \, \bigg| \mathcal{X}_n \bigg) \bigg|  \\
	& \qquad \lesssim ( \De  \log d )^{1/3}   \big\{ \log d + \log(1/\De ) \big\}^{1/3},
\end{align*}
where $\De  =\De({\mathcal{N}_\epsilon})= \max_{\balpha, \bbeta \in \mathcal{N}_\epsilon}  \big|  \balpha_{\bSigma}^{{{\rm T}}}  \bSigma   \bbeta_{\bSigma} -  \balpha_n^{{{\rm T}}} \hat \bSigma     \bbeta_n \big|$.

By Lemma~7.3, we have for every $t>0$,
\begin{align*}
	\P \bigg\{ \sup_{\balpha \in \mathcal{V}}\lea \balpha_{\bSigma} , \bZ\ria \geq C  \s{s \log(\gamma_s ep /s )} + t \bigg\} & \leq e^{-t^2/2}, \\
		\P \bigg\{ \sup_{\balpha \in \mathcal{V}}\lea \balpha_n , \bZ_n \ria \geq C    \s{s \log ( \hat\gamma_{s} e p/s) } + t  \, \bigg| \mathcal{X}_n \bigg\} & \leq e^{-t^2/2}.
\end{align*}
The last three displays, together with \eqref{eqA.4} imply that, for every $\epsilon \in (0,1)$ and $t>0$,
\begin{align*}
	&	\P\bigg( \sup_{\balpha\in \mathcal{V}} \lea  \balpha_n , \bZ_n \ria \leq  t \, \bigg| \mathcal{X}_n  \bigg) \\
	& 	\leq \P\bigg( \max_{\balpha\in \mathcal{N}_\epsilon} \lea  \balpha_n , \bZ_n \ria \leq  t \, \bigg| \mathcal{X}_n  \bigg) \\
	& \leq   \P\bigg( \max_{\balpha\in \mathcal{N}_\epsilon} \lea  \balpha_{\bSigma}, \bZ  \ria \leq t  \bigg) + C  \De^{1/3} (\log d)^{1/3}   \{ \log(d/\De)  \}^{1/3} \\
	& \leq \P\bigg\{ \sup_{\balpha\in \mathcal{V}} \lea  \balpha_{\bSigma}, \bZ  \ria \leq  t + C   \epsilon \gamma_s \, c^{1/2}_n(s,p) \bigg\} \\
	& \qquad \qquad \quad  \qquad \qquad \qquad + C \De^{1/3} (\log d)^{1/3} \{  \log(d/\De)\}^{1/3} + n^{-1} \\
	& \leq \P \bigg( \sup_{\balpha  \in \mathcal{V}} \lea  \balpha_{\bSigma}, \bZ  \ria \leq  t   \bigg) +  C \epsilon  \gamma_s \,  c_n(s,p) \\
	& \qquad \qquad \qquad  \quad \qquad \qquad+ C \De^{1/3} (\log d)^{1/3}  \{ \log(d/\De) \}^{1/3} + n^{-1},
\end{align*}
where $ c_n(s,p)= s\log ( \gamma_s e p/s) \vee \log n$ and the last inequality comes from (7.6). For the lower bound, in view of \eqref{eqA.5}, it can be similarly obtained that
\begin{align*}
&	\P\bigg( \sup_{\balpha\in \mathcal{V}} \lea  \balpha_n , \bZ_n \ria \leq  t \, \bigg| \mathcal{X}_n  \bigg)  \\
&	\geq \P\bigg\{ \max_{\balpha\in \mathcal{N}_\epsilon} \lea  \balpha_n , \bZ_n \ria \leq  t- C \epsilon  \hat\gamma_{s} \sqrt{ s \log (  \hat\gamma_{s}  e p /s ) \vee \log n } \, \bigg| \mathcal{X}_n \bigg\} -  n^{-1}  \\
& \geq \P\bigg( \sup_{\balpha \in \mathcal{V}} \lea  \balpha_{\bSigma}, \bZ \ria \leq  t \bigg)  -   C \epsilon \hat\gamma_{s}  \{  s \log ( \hat\gamma_{s}   e p /s ) \vee \log n \}   \\
& \qquad  \qquad \qquad \qquad \qquad  -  C \De^{1/2} (\log d)^{1/3} \{ \log(d/\De) \}^{1/3}-  n^{-1}.
\end{align*}
Taking $\mathcal{V}_{n}=\mathcal{N}_{\epsilon}$ with $\epsilon=( \gamma_s  n)^{-1}$ proves \eqref{eqA.1}.

\medskip
\noi
{\it \textbf{Step~2.}} Next, we study $\De_{n}$ in \eqref{eqA.3}, which is bounded by
\begin{align}
   \max_{\balpha, \bbeta \in \mathcal{V}_{n}} \big|  \balpha_{\bSigma}^{{{\rm T}}} (\hat \bSigma-\bSigma)  \bbeta_{\bSigma}   \big| + \max_{\balpha, \bbeta \in \mathcal{V}_{n}} \big| \balpha^{{{\rm T}}}_n \hat \bSigma  \bbeta_n  -  \balpha_{\bSigma}^{{{\rm T}}} \hat \bSigma   \bbeta_{\bSigma}  \big|   :=   \De_{n,1} + \De_{n,2}.  \label{eqA.6}
\end{align}
In what follows, we bound the two terms on the right side separately, starting with $\De_{n,2}$. For every $\balpha , \bbeta \in \mathcal{V}_{n}$,
\begin{align}
	& \big|  \balpha_n^{{{\rm T}}} \hat \bSigma   \bbeta_n -  \balpha_{\bSigma}^{{{\rm T}}} \hat \bSigma   \bbeta_{\bSigma} \big| \nn \\
  &=  \big|  \balpha_n^{{{\rm T}}} \hat \bSigma ( \bbeta_n - \bbeta_{\bSigma} ) + \balpha_n^{{{\rm T}}} \hat \bSigma \bbeta_{\bSigma}  -  \balpha_{\bSigma}^{{{\rm T}}} \hat \bSigma   \bbeta_{\bSigma} \big| \nn \\
	&  \leq \| \balpha_n  \|_n \|  \bbeta_n-  \bbeta_{\bSigma} \|_n +  \| \bbeta_{\bSigma} - \bbeta_n \|_n \|  \balpha_{\bSigma}- \balpha_n \|_n + \| \bbeta_n \|_n \| \balpha_n - \balpha_{\bSigma} \|_n \nn  \\
	& \leq  \big| \| \balpha_{\bSigma} \|_n -1 \big| + \big| \|  \bbeta_{\bSigma} \|_n -1 \big|+\big| \| \balpha_{\bSigma}  \|_n -1 \big| \cdot  \big| \| \bbeta_{\bSigma} \|_n -1 \big|.  \nn
\end{align}
Using this together with Lemma~7.2 yields, with probability at least $1-3 n^{-1/2}c^{1/2}_n(s,p)$,
\be
	\De_{n,2} \lesssim K_1^2  \, n^{-1/2}  c^{1/2}_n(s,p).   \label{eqA.8}
\ee

Turning to $\De_{n,1}$, it suffices to focus on
\be
	\max_{\balpha, \bbeta \in \mathcal{V}_{n}} \big|  \balpha_{\bSigma}^{{{\rm T}}} (\mS_n-\bSigma)  \bbeta_{\bSigma} \big| = \max_{\balpha, \bbeta \in \mathcal{V}_{n}} \bigg| n^{-1} \sn \big\{ (  \balpha_{\bSigma}^{{{\rm T}}} \bX_i )( \bbeta_{\bSigma}^{{{\rm T}}} \bX_i )-  \balpha_{\bSigma}^{{{\rm T}}} \bSigma   \bbeta_{\bSigma} \big\} \bigg|.  \label{eqA.9}
\ee
Applying Theorem 4 in \cite{A08} we obtain that, with probability at least $1-4e^{-t}$,
\begin{align}
	   & 	\max_{\balpha, \bbeta \in \mathcal{V}_{n}} \big|  \balpha_{\bSigma}^{{{\rm T}}} (\mS_n-\bSigma)  \bbeta_{\bSigma} \big|  \nn \\
	  	& \leq 2 \e \bigg\{ \,  	\max_{\balpha, \bbeta \in \mathcal{V}_{n}} \big|  \balpha_{\bSigma}^{{{\rm T}}} (\mS_n-\bSigma) \bbeta_{\bSigma} \big|  \bigg\} \nn \\
	  & \quad  + \max\bigg\{ 2 \sigma_{\mathcal{V}_n} \f{\s{t}}{n} + C\f{t}{n} \bigg\| \max_{1\leq i\leq n}\max_{\balpha, \bbeta \in \mathcal{V}_n}   \balpha_{\bSigma}^{{{\rm T}}} \bX_i \bX_i^{{{\rm T}}}  \bbeta_{\bSigma}  \bigg\|_{\psi_1} \bigg\}, \label{eqA.10}
\end{align}
where similarly to \eqref{eq7.6} and \eqref{eq7.8},
\begin{align}
	\sigma_{\mathcal{V}_{n}}^2 & := \max_{\balpha, \bbeta \in \mathcal{V}_{n}} \sn \e ( \balpha_{\bSigma}^{{{\rm T}}} \bX_i \,   \bbeta_{\bSigma}^{{{\rm T}}} \bX_i)^2 \nn \\
	&  \leq \max_{\balpha, \bbeta \in \mathcal{V}_{n}} \sn \big\{ \e ( \balpha_{\bSigma}^{{{\rm T}}} \bX_i)^4\big\}^{1/2} \big\{ \e (   \bbeta_{\bSigma}^{{{\rm T}}} \bX_i)^4 \big\}^{1/2} \leq 16 K_1^4 \, n \label{eqA.11}
\end{align}
and
\be
\bigg\| \max_{1\leq i\leq n}\max_{\balpha, \bbeta \in \mathcal{V}_{n}}   \balpha_{\bSigma}^{{{\rm T}}} \bX_i \bX_i^{{{\rm T}}}   \bbeta_{\bSigma} \bigg\|_{\psi_1} \lesssim  K_1^2  c_n(s,p)  . \label{eqA.12}
\ee	
From the moment inequality $\|  \balpha_{\bSigma}^{{{\rm T}}}  \bX_i \, \bbeta_{\bSigma}^{{{\rm T}}} \bX_i \|_{\psi_1} \leq 2 K_1^2$, another consequence of (7.1) is that
\begin{align*}
	 \P\bigg( \bigg| n^{-1}\sn \big\{ (  \balpha_{\bSigma}^{{{\rm T}}} \bX_i )( \bbeta_{\bSigma}^{{{\rm T}}} \bX_i ) & - \balpha_{\bSigma}^{{{\rm T}}} \bSigma \bbeta_{\bSigma} \big\}  \bigg| \geq t \bigg)  \\
	&   \leq 2 \exp\{ - c_{{\rm B}} n \min(  t^2/K_1^4 , t/ K_1^2 ) \}.
\end{align*}
Using this together with Lemma~7.4 we get
\be
	\e \bigg\{ \,  \max_{\balpha, \bbeta \in \mathcal{V}_n} \big|  \balpha_{\bSigma}^{{{\rm T}}} (\mS_n-\bSigma)  \bbeta_{\bSigma} \big|  \bigg\} \lesssim K_1^2 \s{\f{\log(|\mathcal{V}_n|)}{n}} + K_1^2 \f{\log(|\mathcal{V}_n|)}{n}. \label{eqA.13}
\ee
Consequently, combining \eqref{eqA.9}--\eqref{eqA.13} gives, with probability at least $1- 2n^{-1/2}c^{1/2}_n(s,p)$,
\be
	\De_{n,1} \lesssim  K_1^2 \, n^{-1/2}  \{ s  b_{n}(s,p) \}^{1/2} . \label{eqA.14}
\ee
Together, \eqref{eqA.6}, \eqref{eqA.8} and \eqref{eqA.14} imply that, with probability at least $1-5 n^{-1/2} c^{1/2}_n(s,p)$,
\be
	\De_{n } \lesssim  K_1^2 \, n^{-1/2}  \{ s   b_{n}(s,p) \}^{1/2}    .  \label{eqA.15}
\ee

\noi
{\it \textbf{Step~3.}} Finally, we study the sample $s$-sparse condition number $\hat\gamma_{s}$ in \eqref{eqA.2}. For every $\bu \in \bbr^p$, note that $( \|\bu\|_n / |\bu |_{\bSigma} )^2 =  \bu_{\bSigma}^{{{\rm T}}}  \hat \bSigma  \bu_{\bSigma}    = 1+    \bu_{\bSigma}^{{{\rm T}}}  \hat \bSigma  \bu_{\bSigma} -1$. For every $\epsilon\in (0,1)$, in view of the inequality $\sum_{j=1}^s {p\choose j} \leq (ep /s )^s$ that holds for all $1\leq s\leq p$, there exists an $\epsilon$-net of $\{ \mx \mapsto \lea \bu, \mx \ria: \bu \in \bS^{p-1}, 1\leq |\bu|_0 \leq s \}$ with its cardinality bounded by $\{ (2+\epsilon) ep/( \epsilon s )  \}^s$. Consequently, it follows from Lemma~7.2 and the previous display that, with probability at least $1-C n^{-1/2}   c^{1/2}_n(s,p)$,
\begin{align} \label{eqA.16}
	 \tfrac{1}{2} \leq   \big( \|\bu\|_n / |\bu |_{\bSigma} \big)^2 \leq \tfrac{3}{2} \ \ \mbox{ for all } \bu \in \bS^{p-1}  \, \mbox{ satisfying } \,  1\leq |\bu|_0\leq s
\end{align}
and hence, $\hat{\gamma}_s \leq 3\gamma_s$ whenever $n$ satisfies $n \gtrsim K_1^4 c_n(s,p)$.

Assembling \eqref{eqA.1}, \eqref{eqA.15} and \eqref{eqA.16} completes the proof of Theorem~3.2. \qed

\section{Discussion on the moment assumptions}
\label{discuss.sec}

As pointed out in Remark~3.3, the sub-exponential rate, i.e. $\log p \asymp n^c$ with some $c \in (0,1)$, requires a sub-Gaussian condition on the sampling distribution. In the following, we will discuss the main steps on how our analysis can be carried over under finite moment conditions, at the cost of imposing more stringent constraints on the dimension $p$ as a function of the sample size $n$.

Note that, inequality \eqref{eq7.26} in the proof of Lemma~7.5 holds with $B_1$--$B_3$ well-defined as long as the fourth moments of all coordinates of $\varepsilon \bX$ are finite. From the proof of Lemma~7.6 we see that the main difficulty comes from bounding
$$
	L_n = L_n(s, p  ) = \sup_{\alpha \in \mathcal{V}}  \frac{1}{\sqrt{n}} \sn \langle   \balpha_{\bSigma} , \varepsilon_i \bX_i  \rangle
$$
with $\balpha_{\bSigma} =  \balpha / | \bSigma^{1/2} \balpha |_2 $ and
$$
	\sup_{\balpha \in \mathcal{V}}  | \balpha^\T \hat{\bSigma} \balpha  - \balpha^\T \bSigma \balpha |,
$$
where $\mathcal{V} = \mathcal{V}(s,p) = \{ \balpha \in \mathbb{S}^{p-1}: | \balpha |_0 = s\}$ for $1\leq s\leq p$. Without loss of generality, we let $\hat{\bSigma} = \hat{\bSigma}_n = n^{-1} \sn \bX_i \bX_i^\T$, where $\bX_1, \ldots,\bX_n$ are i.i.d. copies of a random vector $\bX \in \bbr^p$.

 Deviation bounds for the above two terms are given in the following lemmas. Comparing \eqref{supp.eq1} and \eqref{supp.eq2} with \eqref{eq7.43} and (7.3), respectively,  we see that the consistency of normal approximation requires significantly more stringent condition on the dimension $p$ under finite fourth moment assumptions. In this case, the convergence in Kolmogorov distance
$$
	\sup_{t \geq 0} \big| \PP \big\{ \s{n}  \wh R_n(s,p) \leq t \big\} -    \PP\big\{  R^*(s,p) \leq t \big\}  \big|
$$
holds when $s \log(p) = o(\log n)$ as $n\to \infty$. Because our main focus is on characterizing spurious discoveries from variable selection methods for high-dimensional data with low-dimensional structure, the above result becomes less instructive and is not applicable to the statistical problems considered in this paper. Nonetheless, the study of distributional approximation for heavy-tailed data has its own interest and is also our ongoing work [\cite{SZF2016}].

\begin{assumption} \label{weak.moment}
The random variable $\varepsilon$ satisfies $\e \varepsilon=0$, $\e \varepsilon^2 = 1 $ and $R_0 = \e \varepsilon^4 <\infty$.
There exists a random vector $\bU$ such that $\bX = \bSigma^{1/2}\bU$, $\e ( \bU )=\mo$, $\e ( \bU  \bU^{{{\rm T}}} )= \bI_p$ and $R_1 = \sup_{\bu \in \bbr^p} \e \langle \bu, \bU \rangle^4  <\infty$.
\end{assumption}

\begin{lemma} \label{supp.lm1}
Assume that Condition~\ref{weak.moment} holds. Then, for any $\delta \in (0,1)$,
\begin{align}
	L_n \leq C \max \{ (R_0 R_1)^{1/4} n ^{-1/4}  , 1  \}  (5e\gamma_s p/s)^{s/4}
\delta^{-1/4}  \label{supp.eq1}
\end{align}
with probability greater than $1-\delta$, where $C>0$ is an absolute constant.
\end{lemma}

\begin{proof}[Proof of Lemma~\ref{supp.lm1}]
As before, define $\bW_n =n^{-1/2}\sn \by_i$ with $\by_i = \varepsilon_i \bX_i$ for $i=1,\ldots,n$. By \eqref{eq7.25}, for any $\epsilon \in (0, \gamma_s^{-1})$, there exists a finite set $\mathcal{N}_\epsilon \subseteq \mathcal{V}$ such that $|\mathcal{N}_\epsilon | \leq {p \choose s}(1+ 2/\epsilon)^s$ and
\begin{align}
	L_n \leq (1- \gamma_s \epsilon )^{-1} \max_{\balpha \in \mathcal{N}_\epsilon }  | \langle \balpha_{\bSigma}, \bW_n \rangle | . \label{Ln.bd1}
\end{align}

For every $\balpha \in \mathcal{N}_\epsilon$ and $t>0$, by Markov's inequality and the Rosenthal inequality we have
\begin{align}
 & \P  (   | \langle \balpha_{\bSigma}, \bW_n \rangle | > t  )  \nn \\
 &  \leq  t^{-4} \,\e \langle \balpha_{\bSigma}, \bW_n \rangle^4  \nn \\
 & \lesssim  \frac{1}{t^4 n^2} \bigg\{  \sn \e  \langle \balpha_{\bSigma}, \by_i \rangle^4 + \bigg( \sn \e \langle \balpha_{\bSigma}, \by_i \rangle^2 \bigg)^2 \bigg\}  \nn \\
 & \lesssim R_0 R_1 \, t^{-4} n^{-1}    + t^{-4}  . \nn
\end{align}
This, together with \eqref{Ln.bd1} with $\epsilon = (2\gamma_s)^{-1}$ and the union bound implies that for any $\delta \in (0,1)$, $L_n \lesssim  \max \{ (R_0 R_1)^{1/4} n ^{-1/4}  , 1  \}  \delta^{-1/4}$ with probability at least $1- (5e \gamma_s p/s)^s  \delta $. By a simple algebra, we obtain \eqref{supp.eq1} as required.
\end{proof}

\begin{lemma} \label{supp.lm2}
Assume that Condition~\ref{weak.moment} holds. Then, there exists some absolute constants $C>0$ such that, for any $\delta \in (0,1)$,
\begin{align}
	\sup_{\balpha \in \mathcal{V}} |   \balpha^\T \hat{\bSigma}  \balpha - \balpha^\T \bSigma \balpha | \leq C  \, \phi_{\max}(s)   \sqrt{R_1}  \, s^{1/4}   \sqrt{\frac{  (8e  p/s)^s }{ \delta n } }  \label{supp.eq2}
\end{align}
with probability greater than $1-\delta$, where $\phi_{\max}(s)$ is the $s$-sparse maximal eigenvalue of $\bSigma$.
\end{lemma}

\begin{proof}[Proof of Lemma~\ref{supp.lm2}]

For  every $\balpha \in \mathcal{V}$, define
$$
	\hat{Q}(\balpha) = \balpha^\T \hat{\bSigma} \balpha  =\frac{1}{n} \sn (\balpha^\T \bX_i)^2 \ \ \mbox{ and }  \ \  {Q}(\balpha) = \balpha^\T  {\bSigma} \balpha.
$$
Applying Chebyshev's inequality to the quadratic form $\hat{Q}(\balpha)$, we obtain that for every $\delta>0$,
\begin{align}
	\P\bigg\{ |  \hat{Q}(\balpha) - Q(\balpha) | \geq  \sqrt{ \frac{ \var ( \langle \balpha , \bX  \rangle^2 )  }{ \delta n} } \, \bigg\} \leq  \delta . \label{supp1}
\end{align}
In view of Proposition~6.2 in \cite{C2012}, this upper bound is tight under the finite fourth moment condition. Moreover, for any $\epsilon \in (0, 1/2)$, it follows from Lemma~2 in the supplement to \cite{WBS2016} that there exists $\mathcal{N}_\epsilon \subseteq \mathcal{V}$ with cardinality at most $\pi (1-\epsilon^2/16)^{-(s-1)/2} \sqrt{s} { p \choose s} (2/\epsilon)^{s-1} $ such that
\begin{align}
	\sup_{\balpha \in \mathcal{V}} | \hat{Q}(\balpha) - Q(\balpha)| \leq (1-2\epsilon)^{-1} \max_{\balpha \in \mathcal{N}_\epsilon}  | \hat{Q}(\balpha) - Q(\balpha)|. \label{supp2}
\end{align}
Together, \eqref{supp1} and \eqref{supp2} with $\epsilon=1/4$ yield
\begin{align}
	\P \bigg\{  \sup_{\balpha \in \mathcal{V}} | \hat{Q}(\balpha) - Q(\balpha)| \geq  \sqrt{ \frac{  \sup_{\balpha \in \mathcal{V}}  \var( \langle \balpha, \bX  \rangle^2 )  }{\delta n}  }  \, \bigg\} \lesssim     \sqrt{s} \, (8ep/s)^s \delta. \nn
\end{align}
Combining this with the fact that
$$
	\langle \balpha, \bX  \rangle^2 = \langle \bSigma^{1/2} \balpha, \bU \rangle^2 =  \balpha^\T \bSigma \balpha \cdot  \bigg\langle  \frac{\bSigma^{1/2} \balpha}{| \bSigma^{1/2} \balpha |_2 } , \bU \bigg\rangle^2
$$
proves \eqref{supp.eq2}.
\end{proof}

\section{Proof of Theorem~4.1}

Without loss of generality, we assume that $\sigma^2=\e (\varepsilon_i^2)=1$ and $s\leq n\leq p\leq e^n$. The dependence of $\hat{R}_n^{{\rm oracle}}$ on $p$ will be assumed without displaying. Let $\hat{\varepsilon}_i=\hat{\varepsilon}_i^{\,{\rm oracle}}$, $\hat{\bbeta} =(\hat{\bbeta}^{{{\rm T}}}_1, \hat{\bbeta}_2^{{{\rm T}}})^{{{\rm T}}}= \hat{\bbeta}^{{\rm oracle}}$ and $\bdelta = (\bdelta_1^{{{\rm T}}}, \bdelta_2^{{{\rm T}}} )^{{{\rm T}}} =\hat{\bbeta} - \bbeta^*\in \bbr^p$ with $\bdelta_1 = \hat{\bbeta}_1-\bbeta_1 \in \bbr^s$ and $\bdelta_2 = \hat{\bbeta}_2-\bbeta_2 = \mo$. As in the proof of Theorem~3.1, we first consider the following standardized version of $\hat{R}_n^{{\rm oracle}}$:
\begin{align}
R_n^{ {\rm oracle}} =    \max_{j\in [p]}     \bigg| n^{-1} \sn  \hat{\varepsilon}_i X_{ij} \bigg| ,  \label{eqB.1}
\end{align}
Recall that $\bdelta=(\bdelta_1^{{{\rm T}}}, \bdelta_2^{{{\rm T}}})^{{{\rm T}}}$ with $\bdelta_2 = \mo$. For every $j\in [p]$, from the identity $\mathbb{X}_{1}^{{{\rm T}}} \mathbb{X}_1 \bdelta_1= \mathbb{X}_{1}^{{{\rm T}}} \beps$ we derive that
\be \label{eqB.2}
 \bSigma_{11}\bdelta_1= n^{-1} \mathbb{X}^{{{\rm T}}}_{1} \beps  +  \mathbf{b}_1 \quad \mbox{ with } \quad \mathbf{b}_1 := -\big( n^{-1} \mathbb{X}_{1}^{{{\rm T}}} \mathbb{X}_{1} -\bSigma_{11} \big)\bdelta_1.
\ee
Together with (4.4) and some simple algebra, this implies
\begin{align}
	&	     n^{-1} \sn \hat{\varepsilon}_i X_{ij}  \nn \\
	      &= n^{-1} \mathbf{e}_j(p)^{{{\rm T}}} \big(  \mathbb{X}^{{{\rm T}}} \beps  -    \mathbb{X}^{{{\rm T}}}   \mathbb{X} \bdelta \big) \nn \\
	 & = n^{-1}\sn  \mathbf{e}_j(p)^{{{\rm T}}} \bP  {\bX}_i \,    \varepsilon_i \nn \\
	 & \quad  -   \mathbf{e}_j(p)^{{{\rm T}}} \small\left(
\begin{array}{c}
 \mo_{s\times 1} \\
 \bSigma_{21}\bSigma_{11}^{-1}\bb_1 + \big( n^{-1}\mathbb{X}_2^{{{\rm T}}} \mathbb{X}_1 - \bSigma_{21} \big) \bdelta_1
\end{array}
\right)  , \label{eqB.3}
\end{align}
where $ \mathbf{e}_j(p)=(0, \ldots, 0, 1, 0 , \ldots, 0)^{{{\rm T}}} $ is the unit vector in $\bbr^p$ with 1 on the $j$th position and
\begin{align}
     \bP = \small\left(
\begin{array}{cc}
\mo_{s\times s} & \mo_{s\times d} \\
- \bSigma_{21}\bSigma_{11}^{-1} & \bI_{d}
\end{array}
\right)  \in \bbr^{p\times p}. \label{eqB.4}
\end{align}
In view of \eqref{eqB.3}, we define
\be
 \widetilde{R}_n  = \max_{j\in [p]}  \bigg| n^{-1} \sn   \mathbf{e}_j(p)^{{{\rm T}}} \bP {\bX}_i \,  \varepsilon_i \bigg| .  \label{eqB.5}
\ee
Together, \eqref{eqB.1}, \eqref{eqB.2}, \eqref{eqB.3} and \eqref{eqB.5} imply
\begin{align}
	& \big|   R^{{\rm oracle}}_n - \widetilde{R}_n  \big|  \nn \\
	&  \leq \max_{j\in[d]} \bigg| \mathbf{e}_{j}(d)^{{{\rm T}}} \bSigma_{21}\bSigma_{11}^{-1/2} \big( n^{-1}  \bSigma_{11}^{-1/2}\mathbb{X}_1^{{{\rm T}}} \mathbb{X}_1 \bSigma_{11}^{-1/2} - \bI_s\big) \bSigma_{11}^{1/2}\bdelta_1 \bigg|
		  \nn \\
 & \quad +	  \max_{j\in [p] \setminus [s]} \bigg|  n^{-1} \sn  \big( X_{ij} \bX_{i,1}^{{{\rm T}}} \bSigma_{11}^{-1/2}- \mathbf{e}_{j-s}(d)^{{{\rm T}}}\bSigma_{21} \bSigma_{11}^{-1/2}  \big) \bSigma_{11}^{1/2}\bdelta_1 \bigg|  \nn \\
	& \leq  \max_{j\in [d]}  \big| \bSigma_{11}^{-1/2}\bSigma_{12} \, \mathbf{e}_{j}(d) \big|_2
	 \big\| n^{-1}  \bSigma_{11}^{-1/2}\mathbb{X}_1^{{{\rm T}}} \mathbb{X}_1 \bSigma_{11}^{-1/2} - \bI_s\big\|  \big|  \bSigma_{11}^{1/2}\bdelta_1 \big|_2  \nn \\
	& \quad + \sqrt{s} \,\big|  \bSigma_{11}^{1/2}\bdelta_1 \big|_2  \max_{j\in  [p]\setminus [s] \atop k\in [s]}   \bigg|  n^{-1} \sn ( {\rm Id}-\e )  X_{ij} \, \mathbf{e}_{k}(s)^{{{\rm T}}}\bSigma_{11}^{-1/2} \bX_{i,1}  \bigg|  \nn \\
	& := Q_1 + Q_2,   \label{eqB.6}
\end{align}
where $( {\rm Id} -\e )Y := Y - \e Y$ for any random variable $Y$.

With the above preparations, the rest of the proof involves three steps: First, we prove the Gaussian approximation of $\sqrt{n} \widetilde{R}_n$ by the Gaussian maximum $\widetilde{R}^* :=| \widetilde{\bZ} |_\infty$, where $ \widetilde{\bZ} \sta{d}{=} N(\mo, \bSigma_{22.1})$. Second, we prove that $\sqrt{n}(Q_1+Q_2)$ is negligible with high probability and that $\sqrt{n}\hat{R}^{{\rm oracle}}_n$ and $\sqrt{n} \widetilde{R}_n$ are close. Finally, we apply an anti-concentration argument to prove the convergence in the Kolmogorov distance.

\medskip
\noi
{\it \textbf{Step~1: Gaussian approximation.}} First we prove that, under Condition~4.1 in the main text, there exists a random variable $\widetilde T^* \sta{d}{=} \widetilde{R}^*$ such that, for every $\delta\in (0, K_0K_1]$,
\begin{align}
  \P\big( \big|\sqrt{n} \widetilde{R}_n  - \widetilde{T}^* \big|  \geq 16 \delta \big) \lesssim    (K_0K_1)^3 \frac{(\log p)^2}{\delta^3 \sqrt{n}} + (K_0K_1)^4 \frac{(\log p)^5}{\delta^4 n}. \label{eqB.7}
\end{align}

By the definition of $\bP$ in \eqref{eqB.4}, we have
$$
	\sqrt{n }\widetilde{R}_n = \max_{j\in [p]\setminus [s]}  \bigg| n^{-1/2} \sn \mathbf{e}_j(p)^{{{\rm T}}} \bP {\bX}_i  \, \varepsilon_i \bigg|,
$$
where $[p] \setminus [s]=\{s+1, \ldots, p\}$. In addition, write $\bX_{i}=(\bX_{i,1}^{{{\rm T}}}, \bX_{i,2}^{{{\rm T}}})^{{{\rm T}}}$ with $\bX_{i,1} \in \bbr^s,  \bX_{i,2}\in \bbr^{d}$ and define $\widetilde{\by}_i=\varepsilon_i \widetilde \bX_i$, where $\widetilde{\bX}_i=\bX_{i,2} - \bSigma_{21}\bSigma_{11}^{-1}\bX_{i,1}$ are such that $\e ( \widetilde{\bX}_i ) = \mo$ and $\e ( \widetilde{\bX}_i  \widetilde{\bX}_i^{{{\rm T}}} )   = \bSigma_{22.1}$. In this notation, we can rewrite $\sqrt{n }\widetilde{R}_n$ as
\be
	\sqrt{n }\widetilde{R}_n = \max_{j\in [d]}  \bigg|  n^{-1/2} \sn  \mathbf{e}_j(d)^{{{\rm T}}}   \widetilde{\by}_i  \bigg|.  \label{eqB.8}
\ee

Next, we use the coupling inequality \eqref{eq7.26} below with $d=p-s$ to the random vectors $\bV_1,\ldots,\bV_n$ which, on this occasion, are defined by $\bV_i=(V_{i1},\ldots, V_{id})^{{{\rm T}}}$ with $V_{ij}= \mathbf{e}_j(d)^{{{\rm T}}}   \widetilde \by_i $. Since $\e (\varepsilon_i \bX_i)=0$ and $\e (\varepsilon_i^2|\bX_i)=1$, we have $\e ( \bV_i) = \mo$ and $\e ( \bV_i \bV_i^{{{\rm T}}} ) = \bSigma_{22.1}$. Then there exists a random variable $\widetilde T^*  \sta{d}{=} |  \widetilde{\bZ} |_\infty$ such that, for every $\delta>0$,
\begin{align}
	&  \P\big( \big| \sqrt{n} \widetilde R_n  - \widetilde T^*  \big| \geq 16 \de \big) \nn \\
	&  \lesssim  B_1 \frac{  \log p }{\de^2  n}  + B_2  \frac{   (\log p)^2}{\de^3 n^{3/2}}  + B_3 \frac{  (\log p)^3 }{\de^{4}   n^2}  + \f{\log n}{n}  .   \label{eqB.9}
\end{align}
In addition, note that the random vectors $\bV_i=(V_{i1},\ldots, V_{id})^{{{\rm T}}}$ are such that $\e ( V_{ij}^2 ) = \widetilde \sigma_{jj} \leq 1 $ and
\begin{align}
 	& \max_{j\in [d]} \big\|	\mathbf{e}_j(d)^{{{\rm T}}}   \widetilde{\by}_i   \big\|_{\psi_1}  \nn \\
    & \leq 2K_0   \max_{j\in [d]} \big\|	\mathbf{e}_j(d)^{{{\rm T}}} \widetilde{\bX}_i   \big\|_{\psi_2}  \nn \\
   & \leq   2K_0   \max_{j\in [p]\setminus [s] } \big\|	\mathbf{e}_j(p)^{{{\rm T}}}  \bP \bSigma^{1/2} {\bU}_i   \big\|_{\psi_2} \nn \\
   &  \leq 2K_0K_1 \max_{j\in [p]\setminus [s]}  \big\{ \mathbf{e}_j(p)^{{{\rm T}}}  \bP \bSigma \bP^{{{\rm T}}} \mathbf{e}_j(d)  \big\}^{1/2}   \nn \\
   & = 2K_0K_1 \max_{j\in [d]}  \widetilde \sigma_{jj}^{1/2} \leq 2 K_0 K_1 . \label{eqB.10}
\end{align}
Consequently, similar arguments to those leading to \eqref{eq7.27},  \eqref{eq7.28} and  \eqref{eq7.29} in Appendix~\ref{AppE.4} can be used to derive that
\begin{align}
	B_1 \lesssim (K_0K_1)^2 \big\{ \sqrt{ n\log p} +  (\log p)^3 \big\}, \nn \\
	B_2  \lesssim  (K_0 K_1)^3    \big\{ n +  (\log p)^4 \big\}, \quad B_3 \lesssim (K_0K_1)^4 \, n (\log p)^2. \nn
\end{align}
Plugging the above bounds for $B_1$--$B_3$ into \eqref{eqB.9} proves \eqref{eqB.7}.

\medskip
\noi
{\it \textbf{Step~2.}} First we prove that $\sqrt{n}Q_1$ and $\sqrt{n}Q_2$ are negligible with high probability, starting with $\sqrt{n} Q_1$. Since $\bSigma_{22.1}=\bSigma_{22}-\bSigma_{21}\bSigma_{11}^{-1}\bSigma_{12}$ is positive definite,
\be
  \max_{j\in [d]}  \big| \bSigma_{11}^{-1/2}\bSigma_{12} \, \mathbf{e}_j(d)   \big|_2     \leq  \max_{j\in [d]}  \big\{ \mathbf{e}_j(d)^{{{\rm T}}}  \bSigma_{22} \, \mathbf{e}_j(d)   \big\}^{1/2}   =1 . \label{eqB.11}
\ee
Again, from the identity $\mathbb{X}_{1}^{{{\rm T}}} \mathbb{X}_1 \bdelta_1= \mathbb{X}_{1}^{{{\rm T}}} \beps$ we find that
\begin{align}
	  &  \bdelta_1^{{{\rm T}}}  (n^{-1}\mathbb{X}_1^{{{\rm T}}} \mathbb{X}_1 ) \bdelta_1 \nn \\
	  &	 =   \bdelta_1^{{{\rm T}}} \bSigma_{11}^{1/2}   n^{-1}\bSigma_{11}^{-1/2}\mathbb{X}_1^{{{\rm T}}} \beps    \label{eqB.12} \\
	& \leq \big|  \bSigma_{11}^{1/2} \bdelta_1 \big|_2   \cdot  \big|  n^{-1}\bSigma_{11}^{-1/2} \mathbb{X}_{1}^{{{\rm T}}} \beps \big|_2   \nn \\
	&  \leq  \big|  \bSigma_{11}^{1/2} \bdelta_1 \big|_2  \cdot \sqrt{s} \,  \big|  n^{-1}\bSigma_{11}^{-1/2} \mathbb{X}_{1}^{{{\rm T}}} \beps \big|_{\infty} . \nn
\end{align}
To bound the left-hand side of \eqref{eqB.12} from below, note that
\begin{align}
	&  \bdelta_1^{{{\rm T}}}  (n^{-1}\mathbb{X}_1^{{{\rm T}}} \mathbb{X}_1 ) \bdelta_1  \nn \\
	& =  \bdelta_1^{{{\rm T}}} \bSigma_{11}^{1/2} \big( n^{-1}\bSigma_{11}^{-1/2}\mathbb{X}_1^{{{\rm T}}} \mathbb{X}_1\bSigma_{11}^{-1/2} - \bI_s \big) \bSigma_{11}^{1/2}  \bdelta_1 +  \bdelta_1^{{{\rm T}}}  \bSigma_{11} \bdelta_1 \nn \\
 & \geq \big(  1-  \big\| n^{-1} \bSigma_{1 1}^{-1/2}\mathbb{X}_{1}^{{{\rm T}}} \mathbb{X}_{1}\bSigma_{1 1}^{-1/2} -\bI_s   \big\| \big) \bdelta_1^{{{\rm T}}}  \bSigma_{11} \bdelta_1. \label{eqB.13}
\end{align}
Recall that $\bX_{i}=(\bX_{i,1}^{{{\rm T}}}, \bX_{i,2}^{{{\rm T}}})^{{{\rm T}}}$, $\mathbb{X}_{1}\bSigma_{1 1}^{-1/2}=(\bSigma_{1 1}^{-1/2} \bX_{1,1} ,\ldots , \bSigma_{1 1}^{-1/2} \bX_{n,1}  )^{{{\rm T}}} \in \bbr^{n\times s}$. Under Condition~4.1,
\begin{align}
 \sup_{\balpha\in \bS^{s-1}}\big\|  \balpha^{{{\rm T}}} \bSigma_{1 1}^{-1/2} \bX_{i,1} \big\|_{\psi_2} &  = \sup_{\balpha\in \bS^{s-1}}\big\|  \balpha^{{{\rm T}}}\bSigma_{1 1}^{-1/2} (\bI_s, \mo) \bSigma^{1/2} \bU \big\|_{\psi_2}  \nn \\
 & \leq K_1 \sup_{\balpha\in \bS^{s-1}}\big|   \bSigma^{1/2} (\bI_s, \mo)^{{{\rm T}}}  \bSigma_{1 1}^{-1/2} \balpha \big|_2 \nn \\
 & = K_1 \sup_{\balpha\in \bS^{s-1}}|\balpha |_2 = K_1, \nn
\end{align}
which, together with Theorem~5.39 in \cite{V12} yields that, for every $t\geq 0$,
\be
  \big\| n^{-1} \bSigma_{1 1}^{-1/2}\mathbb{X}_{1}^{{{\rm T}}} \mathbb{X}_{1}\bSigma_{1 1}^{-1/2} -\bI_s   \big\|  \lesssim \max(\delta, \delta^2)    \label{eqB.14}
\ee
holds with probability at least $1-2\exp(-c_{{\rm B}}t^2)$, where $\delta = K_1^2 \, n^{-1/2}(\sqrt{s} + t )$. By \eqref{eqB.12}, \eqref{eqB.13} and taking $t=c_{{\rm B}}^{-1/2} \sqrt{\log (2n)}$ in \eqref{eqB.14}, we have with probability at least $1-n^{-1}$,
\be
  \frac{1}{2} \bdelta_1^{{{\rm T}}}  \bSigma_{11} \bdelta_1 \leq   \big( n^{-1}\mathbb{X}_1^{{{\rm T}}} \mathbb{X}_1 \big) \bdelta_1  \leq  \big|  \bSigma_{11}^{1/2} \bdelta_1 \big|_2  \cdot \sqrt{s} \,  \big|  n^{-1}\bSigma_{11}^{-1/2} \mathbb{X}_{1}^{{{\rm T}}} \beps \big|_{\infty}  \label{eqB.15}
\ee
whenever the sample size $n$ satisfies $n\gtrsim K_1^4 (s+ \log n)$.

To bound the right-hand side of \eqref{eqB.15}, we define $\xi_{ij} =  \mathbf{e}_j(s)^{{{\rm T}}}  \bSigma_{11}^{-1/2}\bX_{i,1}    \varepsilon_i $ such that $
|  n^{-1}\bSigma_{11}^{-1/2} \mathbb{X}_{1}^{{{\rm T}}} \beps |_{\infty} = \max_{j\in [s]} | n^{-1} \sn \xi_{ij} |$. Under Condition~4.1, we have $\e (\xi_{ij} )=0$, $\e (\xi_{ij}^2)=1$ and
\begin{align}
 \| \xi_{ij} \|_{\psi_1} &  \leq 2\| \varepsilon \|_{\psi_2} \big\| \mathbf{e}_j(s)^{{{\rm T}}} \bSigma_{11}^{-1/2}(\bI_s, \mo ) \bSigma^{1/2} \bU \big\|_{\psi_2} \nn \\
 &  \leq 2 K_0 K_1 \big| \mathbf{e}_j(s)^{{{\rm T}}} \bSigma_{11}^{-1/2}(\bI_s, \mo ) \bSigma^{1/2} \big|_2 = 2K_0 K_1. \nn
\end{align}
Using the union bound and inequality (7.1) in the main text implies that, for every $t>0$,
\begin{align}
	\P\Big\{ \big|  \bSigma_{11}^{-1/2} \mathbb{X}_{1}^{{{\rm T}}} \beps \big|_{\infty} >2K_0K_1 \max\big( \sqrt{n t} ,   t \big)    \Big\}   \leq 2s \exp(-c_{{\rm B}} t).  \label{eqB.16}
\end{align}
Taking respectively $t=c_{{\rm B}}^{-1/2}\sqrt{\log(2n)}$ and $t=c_{{\rm B}}^{-1} \log(2sn)$ in \eqref{eqB.14} and \eqref{eqB.16} yields, with probability at least $1 - 2n^{-1}$,
\begin{align}
	\big\| n^{-1}  \bSigma_{11}^{-1/2}\mathbb{X}_1^{{{\rm T}}} \mathbb{X}_1 \bSigma_{11}^{-1/2} - \bI_s\big\|  \big|  \bSigma_{11}^{1/2}\bdelta_1 \big|_2 \lesssim   K_0K_1^3 \, n^{-1} s \log n  \nn
\end{align}
whenever $n\gtrsim K_1^4 (s+ \log n)$. Combining this with \eqref{eqB.11}, we have with the same probability,
\be
	\sqrt{n} Q_1 \lesssim K_0 K_1^3 \, n^{-1/2}  s\log n   \label{eqB.17}
\ee
whenever $n\gtrsim K_1^4 (s+ \log n)$.

Turning to $Q_2$, we define $\xi_{i,jk}=X_{ij} \,  \mathbf{e}_k(s)^{{{\rm T}}}\bSigma_{11}^{-1/2} \bX_{i,1} $ such that
\begin{align*}
 Q_{21} & := \max_{j\in  [p]\setminus [s] \atop k\in [s]}   \bigg|  n^{-1} \sn ( {\rm Id} - \e)  X_{ij}  \, \mathbf{e}_k(s)^{{{\rm T}}}\bSigma_{11}^{-1/2} \bX_{i,1}  \bigg|  \\
 &  = \max_{j\in  [p]\setminus [s] \atop k\in [s]}   \bigg| n^{-1}\sn \big( \xi_{i,jk}- \e \xi_{i,jk} \big) \bigg|
\end{align*}
and $Q_2\leq \sqrt{s} \,  | \bSigma_{11}^{1/2}\bdelta_1 |_2 \, Q_{21}$. To bound $Q_{21}$, note that $\|  \xi_{i,jk}- \e \xi_{i,jk} \|_{\psi_1}\leq 2\| \xi_{i,jk}\|_{\psi_1}$ and
\begin{align}
	  \big\| \xi_{i,jk} \big\|_{\psi_1} & \leq 2 \| X_{ij} \|_{\psi_2} \big\| \mathbf{e}_k(s)^{{{\rm T}}}\bSigma_{11}^{-1/2} \bX_{i,1}  \big\|_{\psi_2}  \nn \\
	  & =    2 \big\| \mathbf{e}_j(p)^{{{\rm T}}} \bSigma^{1/2} \bU \big\|_{\psi_2} \big\| \mathbf{e}_k(s)^{{{\rm T}}}\bSigma_{11}^{-1/2} (\bI_s, \mo)\bSigma^{1/2}\bU  \big\|_{\psi_2} \nn \\
	& \leq 2K_1^2  \big| \bSigma^{1/2} \mathbf{e}_j(p) \big|_2 \cdot  \big| \bSigma^{1/2} (\bI_s, \mo)^{{{\rm T}}} \bSigma_{11}^{-1/2} \mathbf{e}_k(s) \big|_2 = 2K_1^2. \nn
\end{align}
Then using inequality (7.1) and the union bound again, we obtain that for every $t > 0$,
\begin{align}
	\P \big\{ Q_{21} \geq   4K_1^2 \max\big( \sqrt{t/n} , t/n \big) \big\} \leq 2(p-s)\exp(-c_{{\rm B}} t).   \nn
\end{align}
Taking $t=c_{{\rm B}}^{-1}\log (2pn)$, we conclude from the bound on $|  \bSigma_{11}^{1/2}\bdelta_1 |_2$ established earlier that, with probability at least $1-3n^{-1}$,
\be
  \sqrt{n} Q_2 \lesssim K_0 K_1^3 \, n^{-1/2}  s \log p  \label{eqB.18}
\ee
whenever $n\gtrsim K_1^4 (s+ \log n)$.

Putting \eqref{eqB.6}, \eqref{eqB.17} and \eqref{eqB.18} together implies that, with probability at least $1-3n^{-1}$,
\begin{align}
	\big| \sqrt{n}R_n^{{\rm oracle}} - \sqrt{n} \widetilde R_n \big|  \lesssim K_0 K_1^3 \, n^{-1/2}  s\log p   \label{eqB.19}
\end{align}
whenever $n\gtrsim K_1^4 (s+ \log n)$.

Next, we prove that $\sqrt{n} \hat R_n^{{\rm oracle}}$ and $\sqrt{n} \widetilde R_n$ are close with high probability. To this end, set  $\hat{\beps}= (\hat{\varepsilon}_1,\ldots, \hat{\varepsilon}_n)^{{{\rm T}}}$ and define $\hat{\sigma}^2 = n^{-1}\sn (\hat{\varepsilon}_i -\bar{\varepsilon})^2$, $\hat{\sigma}_{j}^2=n^{-1}\sn (X_{ij}-\bar{X}_j)^2$, where $\bar{\varepsilon}=  \mathbf{e}_n^{{{\rm T}}} \, \hat \beps$ and $\mathbf{e}_n = (1/n, \ldots ,1/n)^{{{\rm T}}}\in \bbr^n$. In this notation, we have
$$
	\sqrt{n} \hat R_n^{{\rm oracle}} =  \hat{\sigma}^{-1} \max_{j\in [p]} \hat{\sigma}_{j}^{-1} \bigg| n^{-1/2} \sn  \hat{\varepsilon}_i X_{ij} -  \sqrt{n} \, \bar{\varepsilon} \bar{X}_j \bigg|.
$$
Combined with \eqref{eqB.1}, this implies
\begin{align}
	& \big| \sqrt{n} \hat R_n^{{\rm oracle}} - \sqrt{n} R_n^{{\rm oracle}} \big| \nn \\
	&  \leq \max_{j\in [p]} \big| (\hat{\sigma} \,\hat{\sigma}_j)^{-1} -1 \big| \cdot \sqrt{n} R^{{\rm oracle}} +  \sqrt{n}  \, \hat{\sigma}^{-1}|\bar{\varepsilon}| \max_{j\in [p]}\hat{\sigma}_j^{-1} |\bar{X}_j|. \label{eqB.20}
\end{align}
In view of \eqref{eqB.19}, it suffices to show that the right-hand side of \eqref{eqB.20} is negligible with high probability. The following lemma provides deviation inequalities for the variance estimators $\hat{\sigma}^2$ and $\hat{\sigma}_j^2$ as well as the sample means $|\bar{\varepsilon}|$ and $|\bar{X}_j|$. The proof is deferred to Section~\ref{Appendix.F}.

\begin{lemma}  \label{lemB.1}
Assume that Condition~4.1 holds. Then, with probability at least $1-C n^{-1}$,
\begin{align}
	   \max_{j\in [p]} |\bar{X}_j| \lesssim K_1 \sqrt{\frac{\log p}{n}}, \quad \max_{j\in [p]} \big| \hat{\sigma}_j^2-1 \big| \lesssim K_1^2 \bigg(  \sqrt{\frac{\log p}{n}} + \frac{\log p}{n} \bigg)  \label{eqB.21}
\end{align}
and
\begin{align}
  |\bar{\varepsilon} | \lesssim K_0 K_1^2  \frac{s\log n}{n} , \quad \big|  \hat{\sigma}^2 -1 \big| \lesssim  K_0^2 \sqrt{\frac{\log n}{n}} + (K_0K_1)^2  \frac{s\log n}{n}, \label{eqB.22}
\end{align}
provided that $n \gtrsim  K_1^4  s\log n$.
\end{lemma}

In addition, for $\sqrt{n}\widetilde{R}_n$ in \eqref{eqB.8}, it follows from the union bound, inequality (7.1) in the main text and \eqref{eqB.10} that, with probability at least $1-2d\exp(-c_{{\rm B}}t)$, $\sqrt{n}\widetilde{R}_n \lesssim K_0K_1 \max( \sqrt{t}, n^{-1/2}  t )$. This implies by taking $t=c_{{\rm B}}^{-1}\log(2pn)$ that, with probability at least $1-n^{-1}$,
\begin{align}
\sqrt{n}\widetilde{R}_n \lesssim K_0 K_1\sqrt{\log p}.  \label{eqB.23}
\end{align}

Combining \eqref{eqB.19}, \eqref{eqB.20} and \eqref{eqB.23}, we conclude from Lemma~\ref{lemB.1} that, with probability at least $1-C n^{-1}$,
\begin{align}
\big| \sqrt{n} \hat R^{{\rm oracle}} - \sqrt{n} \widetilde{R}_n \big|  \lesssim (K_0\vee K_1)^2 K_0K_1 \, n^{-1/2} s \log p ,   \label{eqB.24}
\end{align}
provided $n\gtrsim (K_0\vee K_1)^4  s \log p$.

\medskip
\noi
{\it \textbf{Step~3.}} From \eqref{eqB.7} with $\delta=(K_0K_1)^{3/4} \min \{ 1, n^{-1/8} (\log p)^{3/8} \}$ and \eqref{eqB.24}, we obtain that, with probability at least $1-C (K_0K_1)^{3/4} n^{-1/8} (\log p)^{7/8}$,
\begin{align} \label{eqB.25}
	\big| \sqrt{n} \hat{R}^{{\rm oracle}}_n - \widetilde{T}^* \big| \lesssim (K_0K_1)^{3/4} \frac{(\log p)^{3/8}}{n^{1/8}} + (K_0\vee K_1)^2 K_0K_1 \frac{s \log p}{\sqrt{n}},
\end{align}
where $\widetilde T^* \sta{d}{=} \widetilde R^*$. For $\widetilde{R}^* =| \widetilde{\bZ} |_\infty $, it follows from Theorem~3, (ii) in \cite{CCK14b} and the fact $\max_{j\in [d]}\widetilde \sigma_{jj}\leq 1$ that, for every $\epsilon >0$,
\begin{align} \label{eqB.26}
 \sup_{t\geq 0} \P\big(  \big| \widetilde{R}^* -t \big| \leq \epsilon \big) \leq \widetilde C  \epsilon \big\{ \sqrt{\log d} + \sqrt{ \log(1 / \epsilon)} \, \big\},
\end{align}
where $ \widetilde C>0$ depends only on $\widetilde \sigma_{\min} = \min_{j\in [d]}\widetilde{\sigma}_{jj}$, which under Condition~4.2, is bounded away from zero.

Finally, combining \eqref{eqB.25} and \eqref{eqB.26} leads to
\begin{align}
	&  \sup_{t\geq 0} \big|    \P\big\{ \sqrt{n} \hat{R}_n^{{\rm oracle}}   \leq t \big\}    -  \P \big( \widetilde{R}^* \leq t \big) \big| \nn \\
	&\quad \quad \quad \quad     \lesssim (K_0 K_1)^{3/4}  n^{-1/8} (\log p)^{7/8} +   (K_0 \vee K_1 )^2 K_0 K_1 \, n^{-1/2} s\log p. \nn
\end{align}
The conclusion of the theorem follows immediately. \qed

\section{Proof of Theorem~4.2}

In view of Theorem~4.1, we only need to prove the strong oracle property of $\hat{\bbeta}^{{\rm lla}}$, i.e.
\be
 \P\big( \hat{\bbeta}^{{\rm lla}} = \hat{\bbeta}^{{\rm oracle}} \big) \rightarrow 1 \quad  \mbox{ as } n\rightarrow \infty.    \label{lla.1}
\ee
Together, \eqref{lla.1} and (4.6) prove (4.9).

To prove \eqref{lla.1}, define events
\begin{align}
   \mathcal{A}_1 = \bigg\{      \max_{1\leq j\leq p}   n^{-1} \sn X_{ij}^2   \leq 2 \bigg\}  , \ \ \mathcal{A}_2 =  \Big\{ \kappa(s,3,\mS_n) \geq \tfrac{1}{2} \kappa(s, 3,\bSigma) \Big\},    \label{lla.2}
\end{align}
where $\mS_n = n^{-1}   \mathbb{X}^{{{\rm T}}}\mathbb{X}$. Given $\{\bX_i\}_{i=1}^n$ and on the event $\mathcal{A}_1 \cap \mathcal{A}_2$, applying Theorem~1 and Corollary~3 in \cite{FXZ14} gives, with conditional probability at least $1-2p\exp( -c_0 n \lambda_{{\rm lasso}}^2 / K_0^2 )- 2(p-s)\exp( -c_1 n \lambda^2/ K_0^2)$ over $\{\varepsilon_i\}_{i=1}^n$, the computed estimator $ \hat{\bbeta}^{{\rm lla}}$ equals the oracle estimator $\hat{\bbeta}^{{\rm oracle}}$, provided $\lambda\geq \frac{8\sqrt{s}\, \lambda_{{\rm lasso}}}{\kappa(s,3,\bSigma)}$, where $c_0,c_1>0$ are absolute constants. Taking into account the randomness of $\{\bX_i\}_{i=1}^n$, we obtain that
\begin{align*}
 &	\P\big( \hat{\bbeta}^{{\rm lla}} \neq  \hat{\bbeta}^{{\rm oracle}} \big) \\
 &  \leq   2p\exp( -c_0 n \lambda_{{\rm lasso}}^2 / K_0^2 ) + 2(p-s)\exp( -c_1 n \lambda^2/ K_0^2) + \P(\mathcal{A}_1^{{\rm c}}) + \P(\mathcal{A}_2^{{\rm c}}) .
\end{align*}

It remains to show that the events $\mathcal{A}_1$ and $\mathcal{A}_2$ in \eqref{lla.2} hold with overwhelming probability. Using the union bound and the one-sided version of inequality (7.1), we find that the probability of the complementary event $\mathcal{A}_1^{{\rm c}}$ satisfies $\P( \mathcal{A}_1^{{\rm c}}) \leq  p\exp(-c_2 n /K_1^4)$. Under Condition~4.1, $\bX_i = \bSigma^{1/2}\bU_i$, where $\bU_1,\ldots, \bU_n$ are i.i.d. $\bbr^p$-valued isotropic random vectors. Then it follows from Theorem~6 and Remark~15 in \cite{RZ13} by taking $\delta=1-\sqrt{2}/2$, $s_0=s$, $k_0=3$, $q=p$, $\alpha=K_1$, $A=\bSigma^{1/2}$ and $\Psi=(\bU_1, \ldots, \bU_n)^{{{\rm T}}}\in \bbr^{n\times p}$ there that, $
 \P(\mathcal{A}_2^{{\rm c}}) \leq  2\exp(-c_3n/K_1^4 ) $ whenever the sample size $n$ satisfies $n \gtrsim    \frac{K_1^4 s\log p}{\kappa(s,3+\epsilon, \bSigma)} $. Here, $c_2, c_3>0$ are absolute constants. The proof of Theorem~4.2 is then complete. \qed

\section{Proof of Proposition~3.1}

\subsection{Preliminaries}

First, we introduce basic notation and definitions that will be used to prove Proposition~3.1.

\subsubsection{$m$-generated convex set} \label{pre.sec1}

For any convex set $A \subseteq \bbr^p$, its support function is defined as $\bv \mapsto S_A(\bv) := \sup\{\bw^{{{\rm T}}} \bv: \bw \in A\}$ for $\bv \in \bS^{p-1}$, such that $A$ can be written as
$$
	A=\bigcap_{\bv \in \bS^{p-1}} \big\{	\bw \in \bbr^p: \bw^{{{\rm T}}} \bv \leq S_A(\bv) \big\}.
$$
Following \cite{CCK14}, we say that $A$ is {\it $m$-generated} if it is generated by intersections of $m$ half-spaces; that is, there exists a subset $\mathcal{V}(A) \subseteq \bS^{p-1}$ consisting of $m$ unit vectors that are outward normal to the faces of $A$ such that
\begin{align}
A = \bigcap_{\bv \in \mathcal{V}(A) } \big\{ \bw \in \bbr^p:  \bw^{{{\rm T}}} \bv \leq S_A(\bv) \big\}. \nn
\end{align}
Moreover, for $m\geq 1$ and $\epsilon>0$, we say that a convex set $A$ admits an approximation with precision $\epsilon$ by an $m$-generated convex set $A^m$ if $A^m \subseteq A \subseteq A^{m,\epsilon}$, where $A^{m,\epsilon}:= \cap_{\bv\in \mathcal{V}(A^m)} \{ \bw \in \bbr^p: \bw^{{{\rm T}}} \bv \leq S_{A^m}(\bv) + \epsilon \}$.

\subsubsection{Sparsely convex set} \label{pre.sec2}

In this section, we consider a particular class of convex sets that can be approximated by $m$-generated convex sets with a pre-specified precision for some finite $m\geq 1$.

\begin{definition}[Sparsely convex sets]  \label{defD.1}
Let $1\leq s\leq p$ and $Q \geq 1$ be two integers. We say that $A\subseteq \bbr^p$ is an $(s,Q)$-sparsely convex set if $A = \cap_{q=1}^Q A_q$, where for each $q$, $A_q$ is a convex set and is such that the map $\bw\mapsto  I (\bw \in  A_q )$ depends at most on $s$ components of $\bw=(w_1,\ldots, w_p)^{{{\rm T}}} \in \bbr^p$. We refer to $A =\cap_{q=1}^Q A_q$ as a sparse representation of $A$.
\end{definition}

The class of {\em sparsely convex sets} can be regarded as a generalization of the class of the rectangles. We refer to \cite{CCK14} for a detailed introduction and more concrete examples. In particular, the following result which is Lemma~D.1 there shows that under suitable conditions, sparsely convex sets can be approximated by $m$-generated convex sets with pre-specified precisions.

\begin{lemma} \label{lemD.1}
Assume that A is an $(s,Q)$-sparsely convex set satisfying (i). $\mo \in A$, (ii). $\sup_{\bw\in A}|\bw|_2 \leq R$ for some $R>0$ and (iii). $A=\cap_{q=1}^Q A_q$, where for each $q$, $-A_1 \subseteq \mu A_q$ for some $\mu \geq 1$. Then for every $\gamma > e/8$, there exists $\epsilon_0 = \epsilon_0 (\gamma )>0$ such that for any $0<\epsilon < \epsilon_0$,  $A$ admits an approximation with precision $R\epsilon $ by an $m$-generated convex set $A^m$ satisfying that (i). $|\bv |_0\leq s$ for all $\bv \in \mathcal{V}(A^m)$, and (ii). $ m \leq Q   \left( \gamma \sqrt{\frac{\mu+1}{\epsilon} } \log \frac{1}{\epsilon }\right)^{s^2}$.
\end{lemma}

\subsubsection{Central limit theorem for simple convex sets}

Let $\bX_1, \ldots , \bX_n$ be i.i.d. $p$-dimensional random vectors with mean zero and covariance matrix $\bSigma$, and let $\bZ$ be a $p$-dimensional centered Gaussian random vector with the same covariance matrix. Assume that diag$(\bSigma)=\bI_p$. Write $\bW = n^{-1/2}\sn \bX_i$. For a given class $\mathcal{A}$ of Borel sets in $\bbr^p$, the problem of bounding the quantity $\rho_n (\mathcal{A} ) = \sup_{A\in \mathcal{A}} | \P(\bW \in A ) - \P(\bZ \in A) |$,
which characterizes the rate of convergence to normality with respect to $\mathcal{A}$, is of long-standing interest. In this section, we focus on a particular class of convex sets for which a Berry-Esseen theorem can be established in high dimensions.

For integers $1\leq s\leq p$, $m \geq 1$ and for $\delta \geq 0$, we denote by $\mathcal{A}^{{\rm sc}}(s,m,\delta)$ the class of convex sets in $\bbr^p$ satisfying that, every $A\in \mathcal{A}^{{\rm sc}}(s,m,\delta)$ admits an approximation with precision $\delta$ by an $m$-generated convex set $A^m$ which can be chosen to satisfy $|\bv|_0 \leq s$ for all $\bv \in A^m$. We refer to $\mathcal{A}^{{\rm sc}}(s,m,\delta)$ as a class of simple convex sets. The following Berry-Esseen-type result is a modification of Proposition~3.2 in \cite{CCK14}.

\begin{lemma}
\label{lemD.2}
There exists some integer $1\leq s\leq p$ such that
$$
	K= \sup_{\bu \in \bS^{p-1}: |\bu|_0 \leq s }\| \bu^{{{\rm T}}} \bX_1 \|_{\psi_1} <\infty \,  \mbox{ and } \,  \sigma_{\min}^2 = \inf_{ \bu \in \bS^{p-1}: | \bu |_0\leq s}  \e (\bu^{{{\rm T}}} \bX_1 )^2 >0.
$$
Then, there exists an absolute constant $C>0$ such that for any $m\geq 1$ and $\delta>0$,
\begin{align}
  & \sup_{A\in \mathcal{A}^{{\rm sc}}(s,m, \delta)} |\P(\bW \in A) - \P(\bZ \in A) |   \nn \\
  & \qquad \qquad \qquad  \leq  C  \sigma_{\min}^{-1} \Big[
  K   n^{-1/6} \{\log(m n)\}^{7/6} +  \delta \sqrt{\log m}  \, \Big]. \nn
\end{align}

\end{lemma}

\subsection{Proof of the proposition}

First, we define the following standardized counterparts of $\sqrt{n}\hat{R}_n(k,p)$ for $k=1,\ldots, s$:
\be
 L_n(k,p) = \max_{\bu \in \bS^{p-1}: | \bu |_0=k  }  | \bu^{{{\rm T}}} \bW | =\max_{S\subseteq [p]: |S |=k} |\bW_S |_2 ,  \nn
\ee
where $\bW = n^{-1/2}\sn \by_i$ with $\by_i = \varepsilon_i \bX_i$ for $i=1,\ldots, n$.

The following lemma shows that, after properly normalized, the joint distribution of $\{ L_n(k,p)\}_{k=1}^s$ can be consistently estimated by that of the top $s$ order statistics of i.i.d. chi-square random variables with 1 degree of freedom. Recall that $\bZ=(Z_1, \ldots, Z_p)^{{{\rm T}}} \sta{d}{=} N(\mo, \bI_p )$, and $Z^2_{(1)} \leq Z_{(2)}^2 \leq \cdots \leq Z_{(p)}^2$ denote the order statistics of $\{Z_1^2, \ldots, Z_p^2\}$.

\begin{lemma} \label{lemD.3}
Assume that Conditions~2.1 and 2.2 in the main text hold with $\bSigma=\bI_p$. Then there is an absolute constant $C>0$ such that
\begin{align}
  \sup_{0<t_1<\cdots<t_k \leq 2}& \bigg|   \P  \bigg[  \bigcap_{k=1}^s  \big\{ n^{-1/2}L_n(k,p)   \leq t_k \big\}  \bigg]  \nn \\
  & -    \P\bigg[  \bigcap_{k=1}^s \big\{  n^{-1/2} R^{*}(k,p) \leq   t_k   \big\} \bigg] \bigg|   \leq C  (K_0K_1)^{1/3} \frac{  \{ s^2 \log(pn) \}^{7/6}  }{n^{1/6}} , \nn
\end{align}
where $R^{*}(k,p)^2 = \max_{\bu \in \bS^{p-1}: |\bu |_0=k } ( \bu^{{{\rm T}}} \bZ )^2=\sum_{\nu =1}^k Z^2_{(\nu)}$.
\end{lemma}

Further, define
$$
	\hat{\bL}=\sqrt{n}\big( \hat{R}_n(1,p), \ldots, \hat{R}_n(s,p)  \big)^{{{\rm T}}}, \ \  \bL=\big( L_n(1,p),\ldots, L_n(s,p) \big)^{{{\rm T}}} .
$$
Then it is easy to see that $ | \hat{\bL} - \bL |_\infty \leq \max_{1\leq k\leq s}   | \sqrt{n}\hat{R}_n(k,p) - L_n(k,p) |$. Taking $\mathcal{V} :=\cup_{k=1}^s \mathcal{V}(k,p)$ with $\mathcal{V}(k,p)=\{ \bx \mapsto \bu^{{{\rm T}}} \bx: \bu \in \bS^{p-1}, |\bu|_0=k \}$ as in Lemma~7.2 the same conclusions there hold by a similar argument. Consequently, it follows from a modification of Lemma~7.6 that, with probability greater than $1-C_1  n^{-1/2}\{  c_n(s,p) \}^{1/2}$,
\be
	 | \hat{\bL} - \bL |_\infty \leq C_2 (K_0\vee K_1)^2  K_0 K_1 \, n^{-1/2} c_n(s,p)   \label{eqD.1}
\ee
whenever $n\geq C_3 (K_0\vee K_1)^4 c_n(s,p)$, where $c_n(s,p):= s\log( ep/s)  \vee \log n$.

Together, \eqref{eqD.1} and Lemma~\ref{lemD.3} imply that for any $0<t_1<\cdots < t_s <1$ and all sufficiently large $n$,
\begin{align}
  & \P\bigg[ \bigcap_{k=1}^s  \big\{ \hat{R}_n(k,p) \leq t_k \big\} \bigg]  \nn \\
  & \leq \P\bigg[ \bigcap_{k=1}^s  \big\{  L_n(k,p) \leq \sqrt{n}(  t_k + \epsilon_{n} ) \big\} \bigg] +  C_1 n^{-1/2} \{ c_n(s,p) \}^{1/2} \nn \\
  & \leq  \P\bigg[  \bigcap_{k=1}^s \big\{  n^{-1/2} R^{*}(k,p) \leq   t_k +\epsilon_n  \big\} \bigg]  + C  (K_0K_1)^{1/3} n^{-1/6} \{s^2 \log(pn)\}^{7/6}  \nn \\
 & \leq    \P\bigg[  \bigcap_{k=1}^s \big\{  n^{-1/2} R^{*}(k,p) \leq   t_k   \big\} \bigg]  \nn \\
 & \quad + \P  \big\{ \sqrt{n}  t_s < R^{*}(s,p) \leq \sqrt{n} ( t_s + \epsilon_n ) \big\}    + C   (K_0K_1)^{1/3} n^{-1/6} \{s^2 \log(pn)\}^{7/6}   \nn \\
 & \leq  \P\bigg[  \bigcap_{k=1}^s \big\{  n^{-1/2} R_k \leq   t_k   \big\} \bigg]   \nn \\
 & \quad  + C \bigg[   \epsilon_n \sqrt{ns\log (ep/s ) }   +  (K_0K_1)^{1/3}  n^{-1/6} \{s^2 \log(pn)\}^{7/6} \bigg]  , \nn
\end{align}
where $\epsilon_n = \epsilon_n(s,p)= C_2(K_0\vee K_1)^2 K_0 K_1 \, n^{-1} c_n(s,p) \leq 1 $ for all sufficiently large $n$. A similar argument leads to the reverse inequality, and hence completes the proof. \qed

\subsection{Proof of Lemma~\ref{lemD.2}}

This proof is similar to that of Proposition~3.2 in \cite{CCK14} with slight modification. We reproduce them here for the sake of readability.

For every $A\in \mathcal{A}^{{\rm sc}}(s, m , \delta)$, let $A^m$ be the approximating $m$-generated convex set of $A$ such that $A^m \subseteq A \subseteq A^{m,\delta}$. Put
$$
	\rho = \max\big\{   | \P (\bW \in A^m ) - \P(\bZ \in A^m) | ,    | \P (\bW \in A^{m,\delta} ) - \P(\bZ \in A^{m,\delta}) | \big\}.
$$
Applying Lemma~A.1 in \cite{CCK14} and Theorem~20 in \cite{KOS08} to the $m$-dimensional Gaussian random vector $(\bv^{{{\rm T}}} \bZ)_{\bv\in \mathcal{V}(A^m)}$ implies that
\begin{align}
| \P(\bW \in A ) - \P(\bZ \in A ) | \leq  \sigma_{\min}^{-1} \big( \sqrt{2\log m} + 2 \big) \delta + \rho. \label{eqD.2}
\end{align}

Recall that $K=\sup_{ \bu \in \bS^{p-1}: |\bu|_0\leq s} \| \bu^{{{\rm T}}} \bX_i  \|_{\psi_1} <\infty $. Then, for every $q \geq 3$ and $\bv \in \mathcal{V}(A^m)$ with $| \bv |_0\leq s$,
$$
	n^{-1}\sn \e |\bv^{{{\rm T}}} \bX_i |^{q} \leq n^{-1} \sn (  q \|  \bv^{{{\rm T}}} \bX_i \|_{\psi_1}  )^q \leq (qK)^q.
$$
Consequently, it follows from Proposition~2.1 in \cite{CCK14} that
\begin{align}
 \rho \lesssim \sigma_{\min}^{-1} K \,  n^{-1/6}  \{\log (mn)\}^{7/6}.  \label{eqD.3}
\end{align}

Together, \eqref{eqD.2} and \eqref{eqD.3} complete the proof of the lemma. \qed

\subsection{Proof of Lemma~\ref{lemD.3}}

For any $0<t_1 <\cdots < t_s\leq 2$, we have
\begin{align*}
 \P \big\{  L_n(1,p) \leq \sqrt{n} t_1 , L_n(2,p)\leq \sqrt{n}   t_2, \cdots & ,  L_n(s,p) \leq \sqrt{n}  t_s \big\} \\
 &  = \P \bigg\{    \bW \in \bigcap_{k=1}^s A_k(w_k) \bigg\} ,
\end{align*}
where $w_k= n t_k^2$ for $k=1,\ldots, s$ and for $ t \geq 0$,
\beq
	A_k( t) := \big\{ \bw \in \bbr^p : | \bw_S |_2^2 \leq t  \, \mbox{ for all $S\subseteq [p]$ with $|S|=k$} \big\}.
\eeq
Put $A(\bt)=\cap_{k=1}^s A_k( w_k)$, where $\bt=(t_1,\ldots, t_s)$. For every $1\leq k\leq s$, let $\{S_{k \ell}\}_{\ell=1}^{  {p\choose k}}$ be all the subsets of $[p]$ with cardinality $k$. In this notation, we can further write the set $A(\bt)$ as
\begin{align*}
A(\bt) =  \bigcap_{k=1}^s \bigcap_{\ell=1}^{{p \choose k}}  A_{k\ell}  =  \bigcap_{k=1}^s \bigcap_{\ell=1}^{{p \choose k}}  \big\{ \bw \in \bbr^p :  | \bw_{S_{k\ell}} |_2^2 \leq w_k  \big\} .
\end{align*}
It is easy to see that the indicator function $\bw =(w_1,\ldots, w_p)^{{{\rm T}}} \in \bbr^p \mapsto  I( \bw\in A_{k\ell} )$ depends only on $k\,(\leq s)$ components of $\bw$. By Definition~\ref{defD.1}, $A(\bt)$ is an $ ( s, (ep/s)^s  )$-sparsely convex set. Then it follows from Lemma~\ref{lemD.1} with $R=2 (pn)^{1/2}$ and $\gamma = \mu = 1$ that there exists some constant $\epsilon_0>0$ such that for every $\epsilon \in (0, \epsilon_0)$ and $0<t_1<\cdots<t_s <1$, the set $A(\bt)$ admits an approximation with precision $2\epsilon (pn)^{1/2}$ by an $m$-generated convex set $A^m$, where
\begin{align}
	m \leq \bigg( \frac{ep}{s} \bigg)^s  \bigg( \sqrt{\frac{2}{\epsilon}} \log \frac{1}{\epsilon} \bigg)^{s^2}. \nn
\end{align}
In particular, taking $\epsilon=(pn)^{-1}$ yields, for any $\bt=(t_1,\ldots, t_s)$ with $0<t_1<t_2<\cdots < t_s \leq 2$, $A(\bt) \in \mathcal{A}^{{\rm sc}}\big( s, (ep/s)^s \{ \sqrt{2pn}\log(pn) \}^{s^2} ,2(pn)^{-1/2} \big)$. This, together with Lemma~\ref{lemD.2} and the inequality $\| \bu^{{{\rm T}}}\by_i \|_{\psi_1} \leq 2\| \varepsilon_i \|_{\psi_2} \| \bu^{{{\rm T}}}\bX_i \|_{\psi_2}  \leq 2 K_0 K_1$ that holds for all $\bu \in \bS^{p-1}$ completes the proof of the lemma. \qed

\section{Proof of Proposition~3.2}

Observe that $Z_1^2, \ldots, Z_p^2$ are i.i.d. chi-square random variables with 1 degree of freedom. For $t\in \bbr$ fixed, let $t_p=a_p + t $ with $a_p= 2\log p-\log(\log p)$ such that as $p\rightarrow \infty$,
\begin{align*}
	\P \big\{  Z_{(p)}^2 \leq t_p  \big\}  &= \big\{ 1- \P(Z_1^2 > t_p) \big\}^p   \rightarrow  \exp\bigg(  -\f{1}{\s{\pi}}\, e^{- t/2} \bigg).
\end{align*}
In other words, $Z_{(p)}^2 - a_p$ converges weakly to a Gumbel distribution with a cumulative distribution function given by $G(t)=\exp(  - \pi^{-1/2} e^{-t/2} ) $ for $t\in \bbr$. Consequently, for every $s\geq 2$ fixed, the $s$-dimensional vector
$$
	\big( Z_{(p)}^2-a_p, \ldots, Z_{(p-s+1)}^2-a_p \big)
$$
has a limiting distribution with joint density function given by [\cite{DN03}]
\begin{align*}
	g_s(t_1, \ldots, t_s ) = G(t_s) \prod_{j=1}^s \f{g(t_i)}{G(t_i)}, \ \  t_1 > t_2 > \cdots > t_s,
\end{align*}
where $g(t) = G'(t) =  \f{e^{-t/2}}{2\s{\pi}} \, G(t)$ for $t\in \bbr$, and it is easy to verify that
\beq
	g_s(t_1, \ldots, t_s) =  \bigg( \f{1}{2\s\pi}\bigg)^{s-1} \exp\bigg( -\f{1}{2}\sum_{j=1}^{s-1} t_j  \bigg) g(t_s), \quad t_1 > t_2 > \cdots > t_s.
\eeq
Consequently, as $p\to \infty$,
\begin{align}
	& \P\big\{  Z_{(p)}^2 + \cdots + Z^2_{(p-s+1)} - s  a_p \leq t  \big\}  \nn \\
	& \rightarrow \idotsint_{t_1+\cdots t_s \leq t,  t_1>\cdots > t_s} g_s(t_1, \ldots, t_s)\, dt_1 \cdots \, d t_s  \nn  \\
	& = \bigg( \f{1}{ 2\s \pi}\bigg)^{s-1}\int_{-\infty}^{t/s} \bigg( \idotsint_{t_1+\cdots + t_{s-1} \leq t- t_s, t_1> \cdots > t_{s-1} > t_s} \prod_{j=1}^{s-1} e^{-t_j/2}  \, d  t_j  \bigg) g(t_s) \, dt_s  \nn  \\
	& = \bigg( \f{1}{ \s \pi}\bigg)^{s-1}\int_{-\infty}^{t/s} \bigg( \idotsint_{t_1+\cdots + t_{s-1} \leq  (t- t_s)/2,  t_1> \cdots > t_{s-1} > t_s/2} \prod_{j=1}^{s-1} e^{-t_j }  \, d  t_j  \bigg) g(t_s) \, dt_s \nn  \\
		& = \bigg( \f{1}{  \s \pi}\bigg)^{s-1}\int_{-\infty}^{t/s} \, dt_s \, e^{-(s-1)t_s/2} g(t_s)  \nn \\
	& \quad  \times  \bigg( \idotsint_{u_1 + \cdots + u_{s-1} \leq ( t-s   t_s)/2 , \, u_1 > \cdots > u_{s-1} >0}  e^{ - u_1 - \cdots -  u_{s-1} } \, d u_1 \, \cdots \, du_{s-1}   \bigg). \label{eqE.1}
\end{align}

Now, let $E_1, \ldots, E_{s-1}$ be i.i.d. standard exponential distributed random variables and let $E_{(1)} \geq E_{(2)} \geq \cdots \geq E_{(s-1)}$ be the corresponding order statistics. It is known that the joint density function of $(E_{(1)}, \ldots, E_{(s-1)})$ is $	(s-1)!  e^{-t_1-\cdots - t_{s-1}}$, $t_1 > t_2 > \cdots > t_{s-1} \geq 0$. Therefore, the last multiple integral on the right side of \eqref{eqE.1} is equal to
\begin{align*}
	& \f{1}{(s-1)!}\P\big\{  E_{(1)} + \cdots + E_{(s-1)} \leq (t- s   t_s)/2 \big\} \\
	& = \f{1}{(s-1)!}\P\big\{ E_1 + \cdots + E_{s-1} \leq (t- s  t_s)/2 \big\} \\
	&  = \f{1}{(s-1)! \Gamma(s-1)} \int_0^{(t-s t_s)/2}  u^{s-2}  e^{-u} \, du ,
\end{align*}
where we used the fact that $E_1+ \cdots + E_{s-1} \sta{d}{=} \mbox{Gamma}(s-1, 1)$. Putting the above calculations together yields (3.7).

To prove (3.8), observe that for any $a>0$ and positive integer $\ell$,
$$
\f{1}{\Gamma(\ell)}\int_{0}^a u^{\ell-1}  e^{-u} \, du =   1- \sum_{j=0}^{\ell-1} { a^j \over j!} e^{-a}.
$$
Hence, for $s\geq 2$,
\begin{align}
&
\f{1}{\Gamma(s-1)}\int_{-\infty}^{t/s} \bigg\{ \int_0^{(t-s v)/2}    u^{s-2} e^{-u} \, d u \bigg\}  e^{-(s-1)v/2}  g(v)  \, dv  \nn  \\
& =   \int_{-\infty}^{t/s} e^{-(s-1)v/2} g(v) \, dv   \nn \\
& \quad  - \sum_{j=0}^{s-2} \f{1}{j!  2^j}\int_{-\infty}^{t/s}(t-sv)^j  e^{-(t-sv)/2  - (s-1)v/2}  g(v) \, dv \nn \\
& =     \int_{-\infty}^{t/s} e^{-(s-1)v/2}  g(v) \, dv - e^{-t/2}  \sum_{j=0}^{s-2} \f{1}{j! 2^j} \int_{-\infty}^{t/s}  (t-sv)^j  e^{ v /2 } g(v) \, dv .  \label{eqE.2}
\end{align}
Further, using integration by parts repeatedly gives
\begin{align}
	  & \int_{-\infty}^{t/s} e^{-(s-1)v/2} g(v) \, dv   =   \int_{-\infty}^{t/s} e^{-(s-1)v/2}   \, d G(v) \nn \\
	  & = G( t/s )  e^{-(s-1)t/(2s)} +(s-1) \s{\pi} \int_{-\infty}^{t/s}   e^{-(s-2)v/2}  \, dG(v)  \nn \\
	  & = G(t/s) \Big\{ e^{-(s-1)t/(2s)} +(s-1) \s{\pi} e^{-(s-2)t/(2s)} \Big\} \nn \\
	  & \quad   + (s-1)(s-2)\pi \int_{-\infty}^{t/s}   e^{-(s-3)v/2} \, d G(v)  \nn  \\
	  & = \cdots  \nn \\
	  &  =  G(t/s) \bigg\{ \pi^{(s-1)/2}(s-1)!  + e^{-t/2+  t/(2s)} \label{eqE.3} \\
	  &  \qquad \qquad \qquad  +    e^{-t/2} \sum_{j=1}^{s-2} \pi^{j/2} e^{(j+1)t/(2s)}  \prod_{\ell=1}^j(s-\ell) \bigg\}.  \nn
\end{align}
The first summand of the last term on the right-hand side of \eqref{eqE.2} reads to
\be
	\int_{-\infty}^{t/s}   e^{v/2 }  g(v) \, dv  = e^{ t/(2s)}G( t/s ) -  \s{\pi}  \int_{-\infty}^{t/s} e^v g(v) \, dv. \label{eqE.4}
\ee

Assembling \eqref{eqE.2}--\eqref{eqE.4} completes the proof of Proposition~3.2. \qed

\section{Proof of Lemma~C.1}
\label{Appendix.F}

We continue to adopt the notation in the proof of Theorem~4.1. To prove \eqref{eqB.21}, consider the inequality
$ |\hat \sigma_j^2 - 1  | \leq | n^{-1}\sn X_{ij}^2 -1   | + \bar{X}_j^2$. Analogously to (7.4), for every $t_1, t_2>0$ we have, with probability at least $1-2p\exp(-c_{{\rm H}} t_1) - 2p\exp(-c_{{\rm B}} t_2)$, $\max_{j\in [p]}|\bar{X}_j|\leq K_1 \, n^{-1/2}\sqrt{t_1}$ and
$$
	\max_{j\in [p]}\big|\hat \sigma_j^2 - 1 \big| \leq  K_1^2 \,  n^{-1} t_1  + 4 K_1^2 \max\big( n^{-1/2}\sqrt{t_2} , n^{-1} t_2 \big).
$$
In particular, taking $t_1=c_{{\rm H}}^{-1}\log(2pn)$ and $t_2=c_{{\rm B}}^{-1}\log(2pn)$ proves \eqref{eqB.21}.

Next we prove \eqref{eqB.22}. Recall that $\mathbb{Y}=\mathbb{X}_1 \bbeta_1 + \beps$ and $\hat{\beps}= (\hat{\varepsilon}_1,\ldots, \hat{\varepsilon}_n)^{{{\rm T}}} = \mathbb{Y}-\mathbb{X}_1 \hat{\bbeta}_1$. Therefore, we have
\begin{align}
	  \sn \hat{\varepsilon}_i^{\,2} = \hat{\beps}^{{{\rm T}}} \hat{\beps}  =  \beps^{{{\rm T}}} \beps - 2 \beps^{{{\rm T}}} \mathbb{X}_1 \bdelta_1 +   | \mathbb{X}_1\bdelta_1|_2^2,  \nn \\
	     \bar{\varepsilon} = \mathbf{e}_n^{{{\rm T}}} \, \hat{\beps} =   \mathbf{e}_n^{{{\rm T}}} \beps  - n^{-1} \sn \bX_{i,1}^{{{\rm T}}}  \bdelta_1. \nn
\end{align}
This and \eqref{eqB.16} yield $ | \hat{\beps}^{{{\rm T}}} \hat{\beps} -\beps^{{{\rm T}}} \beps | \leq  3\sqrt{s} \,  | \bSigma_{11}^{-1/2} \mathbb{X}_1^{{{\rm T}}} \beps |_{\infty}  \cdot | \bSigma_{11}^{1/2} \bdelta_1 |_2$
and $|  \mathbf{e}_n^{{{\rm T}}}( \hat{\beps} - \beps ) | \leq  | \bSigma_{11}^{1/2} \bdelta_1 |_2 \cdot \sqrt{s} \, |   n^{-1} \sn \bSigma_{11}^{-1/2} \bX_{i,1} |_{\infty}$. Applying the union bound and inequality (7.2) we obtain that, for every $t>0$, $|  n^{-1/2} \sn \bSigma_{11}^{-1/2} \bX_{i,1} |_{\infty}  \leq   K_1  t$ holds with probability at least $1-2s\exp(-c_{{\rm H}} t^2)$. Hence, taking $t=c_{{\rm H}}^{-1/2}\sqrt{\log(2sn)}$, we conclude from \eqref{eqB.12}, \eqref{eqB.15} and \eqref{eqB.16} that, with probability at least $1-3 n^{-1}$,
\begin{align}
\big|   \hat{\beps}^{{{\rm T}}} \hat{\beps} - \beps^{{{\rm T}}} \beps \big|  \lesssim  (K_0K_1)^2  s\log n, \quad \big|   \mathbf{e}_n^{{{\rm T}}}( \hat{\beps} - \beps )\big|   \lesssim K_0 K_1^2 \, n^{-1}  s\log n ,  \label{eqF.1}
\end{align}
provided that $n \gtrsim K_1^4  (s+\log n)$.

Finally, from \eqref{eqF.1} and (7.4) we obtain, with probability at least $1-5n^{-1}$,
$$
	\big| \hat{\sigma}^2 -1 \big| \lesssim K_0^2  \sqrt{\frac{\log n}{n}} +  (K_0K_1)^2 \frac{s\log n}{n}
$$
whenever $n\gtrsim K_1^4 \, s\log n$. This, together with \eqref{eqF.1} proves \eqref{eqB.22}. \qed

\section{Additional simulation results}
\label{Appendix.H}

In this section, we present additional numerical results for detecting spurious discoveries in the case of non-Gaussian design and noise. We continue with the setup in Section~6.3 by taking $n=120, 160$, $p=400$ and $\bbeta^* = (1, 0, -0.8, 0, 0.6, 0, -0.4, 0, \ldots, 0)^{{{\rm T}}} \in \bbr^{p}$. For $r\in \{ 120, 200, 280, 360\}$, we let $\bx = (x_1, \ldots, x_r)^{{\rm T}}$, where $x_1,\ldots, x_r$ are i.i.d. random variables following the continuous uniform distribution on $[-1,1]$. The rows of the design matrix $\mathbb{X}$ are sampled as i.i.d. copies from $\bGamma_r \bx \in \bbr^p$, where $\bGamma_r$ is a $p\times r$ matrix satisfying $\bGamma_r^{{{\rm T}}} \bGamma_r=\bI_r$. Moreover, the noise variable $\varepsilon$ follows a standardized $t$-distribution with 4 degrees of freedom. We compute the empirical SDP based on 200 simulations. The results are provided in Table~\ref{tab5}.

\begin{table}[h]
\centering
\caption{\label{size} The empirical $\alpha$-level Spurious Discovery Probability (ESDP) based on 200 simulations when $p=400$, $n=120, 160$ and $\alpha=5\%$.}
\label{tab5}
\scriptsize{\begin{tabular}{ccccc}
   &     $r=120$ &    $r=200$ & $r=280$ & $r=360$   \\ \midrule
$ n=120 $ &   $ 0.9100 $ &$ 0.8000 $&$ 0.7200 $ &$ 0.5900 $   \\
\midrule
$ n=160 $& 0.7650    & 0.6100  &  0.3600 &  0.2750  \\
\bottomrule
\end{tabular}}
\end{table}\par

\end{document}